\newcommand{\R}{\ensuremath{\mathbb{R}}}
\newcommand{\CO}{\ensuremath{\mathcal{O}}}
\newcommand{\T}{\theta}
\newcommand{\vf}{\varphi}
\newcommand{\bx}{{\bf x}}
\newcommand{\bz}{{\bf z}}
\newcommand{\bg}{{\bf g}}
\newcommand{\bu}{{\bf u}}
\newcommand{\f}{{\bf f}}
\newcommand{\s}{\ensuremath{\mathbb{S}}}
\newcommand{\tr}{\mathrm{Tr}}
\def\p{\partial}
\def\e{\varepsilon}
\newtheorem {theorem} {Theorem}
\newtheorem {proposition}{Proposition}
\newtheorem {lemma}{Lemma}
\newtheorem {example} {Example}
\newtheorem {remark}{Remark}
\newtheorem {mtheorem} {Theorem}
\begin{document}
\renewcommand{\arraystretch}{1.5}

\title[Detecting normally hyperbolic invariant tori]
{A mechanism for detecting\\ normally hyperbolic invariant tori\\ in differential equations}

\author[P.C.C.R. Pereira,  D.D. Novaes, and M.R. C\^{a}ndido]
{Pedro C.C.R. Pereira, Douglas D. Novaes, and Murilo R. C\^{a}ndido}

\address{Departamento de Matem\'{a}tica - Instituto de Matemática, Estatística e Computação Científica (IMECC) - Universidade Estadual de Campinas (UNICAMP), Rua S\'{e}rgio Buarque de Holanda, 651, Cidade
Universit\'{a}ria Zeferino Vaz, 13083--859, Campinas, SP, Brazil}
\email{pedro.pereira@ime.unicamp.br}
\email{ddnovaes@unicamp.br}
\email{mr.candido@unesp.br}

\keywords{invariant tori, normally hyperbolic invariant manifolds, averaging theory,  method of continuation}

\subjclass[2010]{34C23, 34C29, 34C45}

\begin{abstract}
Determining the existence of compact invariant manifolds is a central quest in the qualitative theory of differential equations. Singularities, periodic solutions, and invariant tori are examples of such invariant manifolds. A classical and useful result from the averaging theory relates the existence of isolated periodic solutions of non-autonomous periodic differential equations, given in a specific standard form, with the existence of simple singularities of the so-called guiding system, which is an autonomous differential equation given in terms of the first non-vanishing higher order averaged function. In this paper, we provide an analogous result for the existence of invariant tori. Namely, we show that a non-autonomous periodic differential equation, given in the standard form, has a normally hyperbolic invariant torus in the extended phase space provided that the guiding system has a hyperbolic limit cycle. We apply this result to show the existence of normally hyperbolic invariant tori in a family of jerk differential equations.
\end{abstract}

\maketitle

\tableofcontents

\section{Introduction and statement of the main result}

Determining the existence of compact invariant manifolds is a central quest in the {\it qualitative theory of differential equations}. Equilibria are trivial examples of such sets. Increasing dimension, periodic solutions are the first non-trivial examples of compact invariant manifolds and have been extensively studied in several contexts. For differential equations written in the so-called {\it standard form} \eqref{eq:e1} below, the {\it averaging method} \cite{SVM}  provides sufficient conditions for the existence of periodic solutions. Regarding higher dimensional compact invariant manifolds, the celebrated {\it Fenichel theory} \cite{Fenichel1971} concerns their persistence under normal hyperbolicity assumptions. We can also mention the classical {\it KAM theory}  \cite{moser}, that provides conditions for the persistence of invariant tori for nearly integrable Hamiltonian systems. 

Bogoliubov and Mitropolski \cite{BM} employed the {\it averaging method} to study the existence of invariant tori for $T$-periodic non-autonomous perturbative differential equations of the kind $\dot\bx=\e F_1(t,\bx).$ Their result has also been obtained by Hale \cite{haleinvariant}. Briefly speaking, under suitable assumptions, the existence of  invariant tori is related with the existence of limit cycles of the averaged equation
\[
\dot \bx=\dfrac{1}{T}\int_0^T F_1(t,\bx)dt.
\]

In this paper, we aim to generalize the previous result by providing sufficient conditions on higher order averaged functions to ensure the  existence of invariant tori for higher order perturbed $T$-periodic non-autonomous differential equations given in the following standard form:
\begin{align} \label{eq:e1}
    \dot \bx = \sum_{i=1}^N \varepsilon^i F_i(t, \bx) + \varepsilon^{N+1} {\Tilde{F}}(t, \bx,\varepsilon),\quad (t,\bx,\e)\in \R \times D \times [0,\varepsilon_0],
\end{align}
where $D$ is an open bounded subset of $\R^2,$  $\varepsilon_0>0,$ and $\Tilde{F}:\R \times D \times [0,\varepsilon_0]\to\R^2$ and each $F_i:\R \times D \rightarrow \R^2,$ $i\in\{1,\ldots,N\}$, are $C^r$ functions, with $r\geq 2$, $T$-periodic in the variable $t$. The proof of our main result is an adaptation of the method of continuation employed by Chicone and Liu in \cite{MR1740943} (see also \cite{Kopell85}).

It is important to mention that the invariant tori of the differential equation \eqref{eq:e1}, if existent, lie in the extended phase space, where, because of the periodicity of \eqref{eq:e1}, the time variable $t\in\R$ becomes an angular variable by taking $\dot t=1$ and $t\in\mathbb{S}^1$. 

\subsection{The averaging method}\label{sec:avm} Before stating our main result, Theorem \ref{theoremA}, we shall discuss the averaging method and introduce the concept of higher order averaged functions for  differential equation \eqref{eq:e1}.  Such method provides important tools for studying nonlinear oscillating systems which are affected by small perturbations. While its origins can be traced back to names such as Clairaut, Laplace, and Lagrange  (see \cite[Appendix A]{SVM}), a rigorous formalization of this theory was achieved by Fatou, Krylov, Bogoliubov, and Mitropolsky  only later, in the 20th century (see \cite{BM,Bo,fatou,BK}). The averaging theory is mainly applied to provide asymptotic estimates for solutions of non-autonomous differential equations given in the standard form \eqref{eq:e1} by means of the solutions of the so-called \textit{truncated averaged equation}:
\begin{align}\label{truncatedintro}
    \dot \bz = \sum_{i=1}^N \varepsilon^i \bg_i(\bz).
\end{align}
It states that any solution of the differential equation \eqref{eq:e1} remains $\e^N$-close to the solution of the truncated averaged equation \eqref{truncatedintro} with the same initial condition, for time $\CO(1/\e)$ and $\e$ small (see \cite[Theorem 2.9.2]{SVM}). The functions $\bg_i:D\rightarrow\R^2,$ for $i\in\{1,\ldots,N\},$ are obtained by solving homological equations arising from near-identity transformation given by the next result.
	\begin{theorem}[{\cite[Lemma 2.9.1]{SVM}}]\label{thm:av1}
	There exists a $C^r,$ $r\geq 2$, $T$-periodic near-identity transformation
	\begin{equation}\label{avtrans}
	\bx=U(t,\bz ,\e)=\bz+\sum_{i=1}^N \e^i\, \bu_i(t,\bz ),
	\end{equation}
	 satisfying $U(0,\bz,\e)=\bz$, such that the differential equation \eqref{eq:e1} is transformed into
	\begin{equation*}\label{fullaveq}
	\dot \bz=\sum_{i=1}^N\e^i\bg_i(\bz)+\e^{N+1} r_N(t,\bz,\e).
	\end{equation*}
	\end{theorem}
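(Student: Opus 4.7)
The plan is to determine the coefficients $\bu_i(t,\bz)$ and $\bg_i(\bz)$ recursively by matching powers of $\e$ in the transformed equation. First I would substitute $\bx=U(t,\bz,\e)=\bz+\sum_{i=1}^N\e^i\bu_i(t,\bz)$ into \eqref{eq:e1}, together with the ansatz $\dot\bz=\sum_{i=1}^N\e^i\bg_i(\bz)+\e^{N+1}r_N(t,\bz,\e)$, so that the chain rule gives
\[
\p_t U(t,\bz,\e)+D_\bz U(t,\bz,\e)\,\dot\bz=\sum_{i=1}^N\e^i F_i(t,U(t,\bz,\e))+\e^{N+1}\tilde F(t,U,\e).
\]
Taylor-expanding each $F_i(t,U)$ around $U=\bz$ in powers of $\e$ and collecting the product $D_\bz U\cdot\dot\bz$, the identity at order $\e^i$ (for $1\le i\le N$) turns into a \emph{homological equation} of the form
\[
\p_t\bu_i(t,\bz)=F_i(t,\bz)-\bg_i(\bz)+\CH_i(t,\bz),
\]
where $\CH_1\equiv 0$ and, for $i\ge 2$, $\CH_i$ is a known polynomial in $\bu_j$, $\bg_j$, and their partial derivatives together with partial derivatives of $F_j$ for $j<i$.

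The crucial observation is that this equation admits a $T$-periodic solution $\bu_i(\cdot,\bz)$ if and only if the time-average of its right-hand side vanishes. I would therefore define
\[
\bg_i(\bz):=\dfrac{1}{T}\int_0^T\bigl[F_i(t,\bz)+\CH_i(t,\bz)\bigr]\,dt
\]
and
\[
\bu_i(t,\bz):=\int_0^t\bigl[F_i(s,\bz)+\CH_i(s,\bz)-\bg_i(\bz)\bigr]\,ds.
\]
By construction $\bu_i(0,\bz)=0$, ensuring $U(0,\bz,\e)=\bz$, and the zero-mean property of the integrand yields $\bu_i(T,\bz)=0$, i.e.\ the required $T$-periodicity. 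Smoothness of $\bg_i$ and $\bu_i$ follows inductively from the $C^r$ regularity of $F_i$, once one notes that each recursion step trades one derivative in $\bz$ for an integration in $t$; only finitely many derivatives are lost in total, which is accommodated by taking $r\ge 2$.

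The remainder $r_N(t,\bz,\e)$ collects all $\CO(\e^{N+1})$ terms produced by the Taylor expansions, and the main technical point, which I expect to be the chief bookkeeping obstacle, is to verify that $r_N$ extends to a function that is continuous (in fact $C^{r-1}$) jointly in $(t,\bz,\e)$ down to $\e=0$ and remains $T$-periodic in $t$. I would handle this by applying the integral form of Taylor's theorem to $F_i(t,U(t,\bz,\e))$ and to $\tilde F(t,U,\e)$, expressing each error as a smooth integral of higher derivatives of the $F_i$ composed with the smooth map $(\bz,\e)\mapsto U$. Finally, since $D_\bz U(t,\bz,0)=\Id$, the inverse function theorem guarantees that for every compact $K\subset D$ there exists $\e_1\in(0,\e_0]$ such that $U(t,\cdot,\e)$ is a $C^r$ diffeomorphism onto its image for all $(t,\e)\in\R\times[0,\e_1]$ and $\bz\in K$, which legitimises the near-identity change of variables uniformly in the angular variable $t$.
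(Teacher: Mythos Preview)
The paper does not give its own proof of this statement: Theorem~\ref{thm:av1} is quoted verbatim from \cite[Lemma 2.9.1]{SVM} and used as a black box, so there is nothing to compare against. Your proposal is the standard argument for this classical result---substitute the ansatz, match powers of $\e$ to obtain the homological equations $\p_t\bu_i=F_i+\CH_i-\bg_i$, set $\bg_i$ equal to the time-average so that $\bu_i$ is $T$-periodic, and impose $\bu_i(0,\bz)=0$ for the stroboscopic condition---and it is correct in outline.

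One small caution on regularity: your sentence ``each recursion step trades one derivative in $\bz$ for an integration in $t$'' is not quite right. The terms $\CH_i$ contain $\bz$-derivatives of the lower-order $F_j$ (coming from the Taylor expansion of $F_j(t,U)$ about $U=\bz$), and the time integration does not recover those lost $\bz$-derivatives. At order $N$ one therefore needs roughly $N$ derivatives of the $F_j$ in $\bx$, so the blanket hypothesis $r\ge 2$ is not by itself sufficient for arbitrary $N$; the paper's own statement is somewhat informal on this point as well.
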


One can see that $\bg_1$ is the time-average of $F_1(t,\bx)$, that is,
\begin{align*}
    \bg_1(\bz) = \frac{1}{T} \int_0^T F_1(s,\bz) \, ds.
\end{align*}
The property $U(\xi,0,\e)=\xi$ is called {\it stroboscopic condition}. We notice that Theorem \ref{thm:av1} is still true without assuming such condition. However, in this case, the functions $\bg_i,$ for $i\geq2,$ are not uniquely determined. Under the stroboscopic condition, the uniqueness of each $\bg_i$ is guaranteed, and so we call it {\it averaged function of order $i$}.

In \cite{Novaes21b}, a general recursive formula for the higher order averaged functions was obtained in terms of Melnikov functions (see \cite{LliNovTei2014,N17}).
Accordingly, define $\f_i,$ for $i\in\{0,\ldots,N\},$ by
\begin{equation}\label{avfunc}
\f_0(\bz)=0\quad\text{and}\quad \f_i(\bz)=\dfrac{y_i(T,\bz)}{i!},
\end{equation}
where
\begin{equation}\label{yi}
\begin{aligned}
y_1(t,\bz)=& \int_0^tF_1(s,\bz)\,ds\,\, \text{ and }\vspace{0.3cm}\\
y_i(t,\bz)=& \int_0^t\bigg(i!F_i(s,\bz)+\sum_{j=1}^{i-1}\sum_{m=1}^j\dfrac{i!}{j!}\p_{\bx}^m F_{i-j} (s,\bz)B_{j,m}\big(y_1,\ldots,y_{j-m+1}\big)(s,\bz)\bigg)ds,
\end{aligned}
\end{equation}
for $i\in\{2,\ldots,N\}.$ In the formulae above, for  $p$ and $q$ positive integers, $B_{p,q}$ denotes the  {\it partial Bell polynomials}:
\[
B_{p,q}(x_1,\ldots,x_{p-q+1})=\sum\dfrac{p!}{b_1!\,b_2!\cdots b_{p-q+1}!}\prod_{j=1}^{p-q+1}\left(\dfrac{x_j}{j!}\right)^{b_j},
\]
where the sum is taken  
over all the $(p-q+1)$-tuple of nonnegative integers $(b_1,b_2,\cdots,b_{p-q+1})$ satisfying $b_1+2b_2+\cdots+(p-q+1)b_{p-q+1}=p,$ and
$b_1+b_2+\cdots+b_{p-q+1}=q.$

Notice that $\f_1(\bz)=T\bg_1(\bz).$ Indeed,
\[
\f_1(\bz)=\int_0^T F_1(t,\bz)dt=T\bg_1(\bz).
\]
The next result states that, under suitable conditions, the same relationship holds for higher order averaged functions.

\begin{proposition}[{\cite[Corollary A]{Novaes21b}}]\label{prop1}
Let $\ell\in\{2,\ldots,N\}$. If either $\f_1=\cdots=\f_{\ell-1}=0$ or $\bg_1=\cdots=\bg_{\ell-1}=0,$ then $\f_i=T\,\bg_i$ for $i\in\{1,\ldots,\ell\}.$
\end{proposition}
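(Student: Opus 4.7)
The plan is to exploit the stroboscopic condition combined with the $T$-periodicity of the near-identity transformation $U$ of Theorem \ref{thm:av1}. Denote by $\bx(t,\bz_0,\e)$ the solution of \eqref{eq:e1} with $\bx(0,\bz_0,\e)=\bz_0$ and by $\bz(t,\bz_0,\e)$ the solution of the transformed equation with $\bz(0,\bz_0,\e)=\bz_0$. Since $\bx(t,\bz_0,\e)=U(t,\bz(t,\bz_0,\e),\e)$, the relations $U(0,\cdot,\e)=\mathrm{Id}$ (stroboscopic condition) and $U(T,\cdot,\e)=U(0,\cdot,\e)$ (from $T$-periodicity of $U$ in $t$) together yield the key identity
\[
\bx(T,\bz_0,\e)=\bz(T,\bz_0,\e).
\]

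I would then identify both sides as Taylor series in $\e$. The recursive definition of $y_i$ in \eqref{yi} is precisely what arises from differentiating $\bx(t,\bz_0,\e)$ repeatedly in $\e$ at $\e=0$ and applying the Fa\`a di Bruno formula (which accounts for the partial Bell polynomials in \eqref{yi}), giving
\[
\bx(T,\bz_0,\e)=\bz_0+\sum_{i=1}^{N}\e^i\f_i(\bz_0)+\CO(\e^{N+1}).
\]
Under the hypothesis $\bg_1=\cdots=\bg_{\ell-1}=0$, the transformed equation reduces to $\dot\bz=\e^\ell \bg_\ell(\bz)+\CO(\e^{\ell+1})$, so writing $\bz(t,\bz_0,\e)=\bz_0+\e^\ell w(t,\bz_0,\e)$ and using smoothness of $\bg_\ell$ leads at once to
\[
\bz(T,\bz_0,\e)=\bz_0+\e^\ell\, T\bg_\ell(\bz_0)+\CO(\e^{\ell+1}).
\]
Matching coefficients of $\e^i$ for $i=1,\ldots,\ell$ yields $\f_i=0=T\bg_i$ for $i<\ell$ and $\f_\ell=T\bg_\ell$, which is the conclusion under the second hypothesis.

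To handle the first hypothesis ($\f_1=\cdots=\f_{\ell-1}=0$), I induct on $\ell$. The base $\ell=2$ is immediate from $\f_1=T\bg_1$. For the inductive step, applying the statement at level $\ell-1$ gives $\bg_1=\cdots=\bg_{\ell-2}=0$ together with $\f_{\ell-1}=T\bg_{\ell-1}$; the assumption $\f_{\ell-1}=0$ then forces $\bg_{\ell-1}=0$, and the previous paragraph closes the argument and supplies $\f_\ell=T\bg_\ell$.

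The main technical obstacle is the identification of $\f_i$ as the Taylor coefficients of $\bx(T,\bz_0,\e)-\bz_0$; this is the Melnikov-type expansion derived in \cite{Novaes21b}, where the Bell polynomials in \eqref{yi} encode the combinatorics of the iterated variational equations. Once this expansion is in hand, everything reduces to the comparison of two Taylor series, trivialized by the stroboscopic identity $\bx(T,\bz_0,\e)=\bz(T,\bz_0,\e)$.
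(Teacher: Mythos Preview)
The paper does not prove this proposition; it is quoted verbatim as Corollary~A of \cite{Novaes21b} and used as a black box. Your argument is correct and is essentially the one given in that reference: the stroboscopic condition together with $T$-periodicity of $U$ forces the time-$T$ maps of \eqref{eq:e1} and of the transformed equation to coincide, and then one reads off the identities $\f_i=T\bg_i$ by comparing the $\e$-expansions of $\bx(T,\bz_0,\e)$ and $\bz(T,\bz_0,\e)$. The only step you defer---that the $\f_i$ defined by \eqref{avfunc}--\eqref{yi} are the Taylor coefficients in $\e$ of $\bx(T,\bz_0,\e)-\bz_0$---is exactly the content of the Melnikov expansion proved in \cite{LliNovTei2014,Novaes21b}, so your acknowledgment of this as the technical input is appropriate.
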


The next proposition is as a direct consequence of Theorem \ref{thm:av1} and Proposition \ref{prop1}.
\begin{proposition}[{\cite[Corollary B]{Novaes21b}}]\label{prop2}
Let $\ell\in\{1,\ldots,N\}$ satisfy  $\f_0=\cdots\f_{\ell-1}=0.$ Then, there exists a $C^r$, $r\geq 2$, $T$-periodic near-identity transformation $\bx=U(t,\bz ,\e)$,
	 satisfying $U(0,\bz,\e)=\bz$, such that the differential equation \eqref{eq:e1} is transformed into
\[
	\dot \bz=\e^{\ell}\dfrac{1}{T}\f_{\ell}(\bz)+\e^{\ell+1} r_{\ell}(t,\bz,\e).
\]
\end{proposition}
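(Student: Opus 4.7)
The plan is to combine Theorem \ref{thm:av1} with Proposition \ref{prop1} in essentially two moves, so the argument will be short. First I would invoke Theorem \ref{thm:av1} to obtain the $T$-periodic near-identity transformation
\[
\bx = U(t,\bz,\e) = \bz + \sum_{i=1}^N \e^i\, \bu_i(t,\bz), \qquad U(0,\bz,\e) = \bz,
\]
which converts \eqref{eq:e1} into $\dot\bz = \sum_{i=1}^N \e^i\, \bg_i(\bz) + \e^{N+1} r_N(t,\bz,\e)$. This is already in the right ambient shape; what remains is to show that the hypothesis kills the coefficients $\bg_1,\ldots,\bg_{\ell-1}$ and identifies $\bg_\ell$ with $\f_\ell/T$.

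Since $\f_0 \equiv 0$ is vacuous, the hypothesis reads $\f_1 = \cdots = \f_{\ell-1} = 0$. For $\ell \geq 2$ this matches the first alternative of Proposition \ref{prop1}, which then yields $\f_i = T\,\bg_i$ for every $i \in \{1,\ldots,\ell\}$; in particular $\bg_i = 0$ for $i < \ell$ and $\bg_\ell = \f_\ell/T$. The case $\ell = 1$ is trivial and needs no appeal to Proposition \ref{prop1}, since $\bg_1 = \f_1/T$ was already recorded immediately after Theorem \ref{thm:av1}. Substituting back, the averaged equation becomes
\[
\dot\bz = \e^{\ell}\,\frac{1}{T}\f_\ell(\bz) + \sum_{i=\ell+1}^N \e^i\,\bg_i(\bz) + \e^{N+1} r_N(t,\bz,\e),
\]
and rebundling the tail via
\[
r_\ell(t,\bz,\e) := \sum_{i=\ell+1}^N \e^{i-\ell-1}\,\bg_i(\bz) + \e^{N-\ell} r_N(t,\bz,\e)
\]
produces exactly the asserted form $\dot\bz = \e^{\ell} T^{-1}\f_\ell(\bz) + \e^{\ell+1} r_\ell(t,\bz,\e)$.

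With the two cited results in hand there is no genuine obstacle. The only points worth verifying carefully while writing this up are that $r_\ell$ remains $C^r$ and $T$-periodic in $t$ (inherited from the $\bg_i$, which are autonomous, and from $r_N$), and that the stroboscopic condition $U(0,\bz,\e)=\bz$ passes transparently from Theorem \ref{thm:av1} to the statement. In the degenerate case $N=\ell$ the first sum in the definition of $r_\ell$ is empty and the proof goes through unchanged with $r_\ell = r_N$.
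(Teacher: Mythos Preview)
Your proposal is correct and follows exactly the route the paper indicates: the paper states that Proposition~\ref{prop2} is a direct consequence of Theorem~\ref{thm:av1} and Proposition~\ref{prop1}, and your argument carries out precisely that deduction, including the careful separation of the case $\ell=1$ and the rebundling of the tail into $r_\ell$.
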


The averaging theory has 
been applied to investigate invariant manifolds \cite{haleinvariant}. In particular, it has found great success 
for studying periodic solutions (see, for example, \cite{llibreaveraging,haleordinary,LliNovTei2014,Novaes21a,NovaesSilva,verhulst}). Indeed, let $\ell\in\{1,\ldots,N\}$ be the index of the first non-vanishing Melnikov function, that is $\f_0=\ldots=\f_{\ell-1}=0,$ and $\f_{\ell}\neq0.$  The next theorem is a very classical result that relates the existence of simple singularities of the so-called {\it guiding system}
\begin{equation}\label{guiding}
\dot \bz=\dfrac{1}{T}\f_{\ell}(\bz)
\end{equation} 
with the existence of isolated $T$-periodic solutions of the differential equation \eqref{eq:e1}.
\begin{theorem}[{\cite[Theorem A]{LliNovTei2014}}]\label{thm:per}
Consider the differential equation \eqref{eq:e1}. Suppose that, for some $\ell\in\{1,\ldots,N\},$ $\f_0=\ldots=\f_{\ell-1}=0,$  $\f_{\ell}\neq0,$   and that  the guiding system \eqref{guiding} has a simple singularity $\bz^*$, that is $\f_{\ell}(\bz^*)=0$ and $d\f_{\ell}(\bz^*)$ is non-singular. Then, for $\e>0$ sufficiently small, the differential equation \eqref{eq:e1} has an isolated $T$-periodic solution converging to $\bz^*$ as $\e$ goes to $0$.
\end{theorem}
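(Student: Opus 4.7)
The plan is to convert the problem of finding a $T$-periodic solution of \eqref{eq:e1} into finding a zero of a displacement map, and then to apply the implicit function theorem using the assumed nondegeneracy of $d\f_\ell(\bz^*)$.

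First I would apply Proposition \ref{prop2} to bring \eqref{eq:e1} into the reduced form
\[
\dot \bz=\dfrac{\e^{\ell}}{T}\f_{\ell}(\bz)+\e^{\ell+1} r_{\ell}(t,\bz,\e),
\]
via the $T$-periodic near-identity transformation $\bx=U(t,\bz,\e)$ with the stroboscopic normalization $U(0,\bz,\e)=\bz$. Because $U$ is $T$-periodic in $t$, a $T$-periodic solution of the reduced equation corresponds one-to-one to a $T$-periodic solution of \eqref{eq:e1}; and because $U(0,\cdot,\e)=\mathrm{Id}$, such solutions passing through $\bz_0$ at $t=0$ in the reduced equation pass through $\bz_0$ at $t=0$ in \eqref{eq:e1}. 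Hence it suffices to produce $T$-periodic solutions of the reduced equation near $\bz^*$.

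Next I would introduce the stroboscopic (Poincaré) map $P_\e(\bz_0)=\bz(T,\bz_0,\e)$, defined on a small neighborhood $V$ of $\bz^*$ for $\e\in[0,\e_1]$ with $\e_1$ small (standard continuous dependence on initial data and parameters, together with compactness of $\ov V$, guarantees existence on $[0,T]$). Since on $[0,T]$ the solution satisfies $\bz(t,\bz_0,\e)=\bz_0+O(\e^\ell)$ uniformly in $\bz_0\in\ov V$, integrating the reduced equation from $0$ to $T$ and expanding $\f_\ell$ at $\bz_0$ gives
\[
P_\e(\bz_0)-\bz_0=\e^\ell\,\f_\ell(\bz_0)+O(\e^{\ell+1}).
\]
Therefore the rescaled displacement
\[
\Delta(\bz_0,\e):=\begin{cases}\e^{-\ell}\bigl(P_\e(\bz_0)-\bz_0\bigr),& \e>0,\\ \f_\ell(\bz_0), & \e=0,\end{cases}
\]
extends to a $C^r$ (in $\bz_0$) and continuous (in $\e$) map with $\Delta(\bz^*,0)=0$ and $\partial_{\bz_0}\Delta(\bz^*,0)=d\f_\ell(\bz^*)$ nonsingular. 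The implicit function theorem then yields a unique branch $\bz_0(\e)\to\bz^*$ of zeros of $\Delta$ for $\e\in[0,\e_2]$, i.e.\ an isolated $T$-periodic orbit of the reduced equation. Undoing the transformation gives the desired isolated $T$-periodic solution of \eqref{eq:e1} converging to $\bz^*$.

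The only delicate point is justifying the expansion of $P_\e(\bz_0)-\bz_0$ with the correct uniform $O(\e^{\ell+1})$ remainder and enough regularity in $\bz_0$ to legitimately apply the implicit function theorem; this is handled by Gronwall-type estimates for the variational equation of the reduced system, together with the $C^r$ smoothness of $F_i$, $\Tilde F$, and $U$, and is essentially the same argument used for the classical Melnikov-type result \cite[Theorem A]{LliNovTei2014}. Everything else is a direct consequence of Proposition \ref{prop2} and the implicit function theorem.
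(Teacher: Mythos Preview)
The paper does not actually prove Theorem~\ref{thm:per}; it is stated as a classical result and attributed to \cite[Theorem~A]{LliNovTei2014}. So there is no in-paper proof to compare against.

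Your argument is the standard one and is correct in outline: reduce via Proposition~\ref{prop2}, form the stroboscopic displacement, rescale by $\e^{-\ell}$, and apply the implicit function theorem at $(\bz^*,0)$. One small point worth tightening: you describe $\Delta$ as ``$C^r$ in $\bz_0$ and continuous in $\e$'', but to invoke the implicit function theorem you need at least $C^1$ regularity jointly in $(\bz_0,\e)$ near $(\bz^*,0)$. This in fact holds, because the Poincar\'e map of the reduced system is $C^r$ in $(\bz_0,\e)$ (the right-hand side is $C^r$ in all its arguments, including $\e$), and writing $P_\e(\bz_0)-\bz_0=\e^\ell\f_\ell(\bz_0)+\e^{\ell+1}R(\bz_0,\e)$ with $R$ of class $C^{r-1}$ gives $\Delta(\bz_0,\e)=\f_\ell(\bz_0)+\e\,R(\bz_0,\e)$, which is $C^{r-1}$ jointly; since $r\ge 2$, this is enough. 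With that clarification your proof is complete and coincides with the approach of the cited reference.
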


Regarding the existence of invariant tori of the differential equation \eqref{eq:e1}, one can find results in the research literature associating it with {\it Hopf bifurcation} in the guiding system \eqref{guiding}. This fact, for $\ell=1$, is briefly commented on \cite[Appendix $C.5$]{SVM}  (see also \cite[Section $4.C$]{lhopf} and \cite[Chapter $2$]{thesis}). In \cite{cannov20}, the authors investigated how a Hopf bifurcation in the truncated average equation \eqref{truncatedintro}, particularly in the guiding system \eqref{guiding} for arbitrary $\ell$, induces a {\it Neimark-Sacker bifurcation} \cite{N,S1,S2} in the time-$T$ map of the differential equation \eqref{eq:e1} and, consequently, a {\it torus bifurcation} in the extended phase space.

\subsection{Main result}
The main result of the present study provides a version of Theorem \ref{thm:per} for detecting invariant tori, instead of periodic solutions. More specifically, it provides sufficient conditions on the guiding system \eqref{guiding} to ensure the existence of normally hyperbolic invariant tori in the extended phase space of the differential equation \eqref{eq:e1}. 
\begin{mtheorem} \label{theoremA}
Consider the differential equation \eqref{eq:e1}. Suppose that, for some $\ell\in\{1,\ldots,N\},$ $\f_0=\ldots=\f_{\ell-1}=0,$  $\f_{\ell}\neq0,$   and that  the guiding system \eqref{guiding}  has an attracting hyperbolic limit cycle $\gamma$. Then, for each $\e>0$ sufficiently small, the differential equation \eqref{eq:e1}  has a $T$-periodic solution $\gamma_{int}$ and a normally hyperbolic attracting invariant torus in the extended phase space. In addition, the torus surrounds the periodic solution $\gamma_{int}$ and converges to $\gamma\times\s^1$ as $\e$ goes to $0$.
\end{mtheorem}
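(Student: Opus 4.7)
The strategy is to reduce to a Poincaré-map continuation problem via averaging and then apply a quantitative normally hyperbolic invariant manifold argument. First, by Proposition~\ref{prop2}, a $C^r$, $T$-periodic near-identity transformation $\bx = U(t,\bz,\varepsilon)$ conjugates~\eqref{eq:e1} to the averaged form
\[
\dot{\bz} = \varepsilon^{\ell}\,\tfrac{1}{T}\f_{\ell}(\bz) + \varepsilon^{\ell+1} r_{\ell}(t,\bz,\varepsilon).
\]
Since $U$ is $O(\varepsilon)$-close to the identity and $T$-periodic in $t$, invariant tori and $T$-periodic solutions of the averaged equation in the extended phase space $D \times \s^1$ correspond bijectively to those of~\eqref{eq:e1}; it therefore suffices to work with the averaged system.

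Consider the time-$T$ Poincaré map $P_{\varepsilon} : D \to D$ of the averaged equation. A variation-of-constants expansion, using that solutions remain $O(\varepsilon^{\ell})$-close to their initial data on $[0,T]$, gives
\[
\|P_{\varepsilon} - \Psi_{T\varepsilon^{\ell}}\|_{C^{r-1}} = O(\varepsilon^{\ell+1}),
\]
where $\Psi_s$ denotes the flow of the guiding system~\eqref{guiding}. Because $\gamma$ is an attracting hyperbolic limit cycle with Floquet exponent $\mu<0$, it is a normally attracting invariant Jordan curve for the diffeomorphism $\Psi_{T\varepsilon^{\ell}}$, with normal-to-tangential spectral gap $1 - e^{\mu T \varepsilon^{\ell}} = O(\varepsilon^{\ell})$. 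Since the perturbation $O(\varepsilon^{\ell+1})$ is much smaller than this gap as $\varepsilon \to 0$, the continuation method of Chicone–Liu~\cite{MR1740943} produces, for every sufficiently small $\varepsilon>0$, a normally hyperbolic attracting invariant Jordan curve $\tilde{\gamma}_{\varepsilon}$ of $P_{\varepsilon}$ that converges to $\gamma$ as $\varepsilon \to 0$. Its suspension under the extended flow on $D \times \s^1$ is a normally hyperbolic attracting invariant 2-torus for the averaged system, which converges to $\gamma \times \s^1$ since solutions drift at speed $O(\varepsilon^{\ell})$. Pulling back via $U$ gives the torus for~\eqref{eq:e1}.

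For the interior periodic orbit, note that $P_{\varepsilon}$ is orientation-preserving (as the time-$T$ map of a planar flow) and leaves the Jordan curve $\tilde{\gamma}_{\varepsilon}$ invariant, so by the Jordan curve theorem the closed topological disk $\bar{D}_{\varepsilon}$ bounded by $\tilde{\gamma}_{\varepsilon}$ is $P_{\varepsilon}$-invariant. Brouwer's fixed point theorem then yields a fixed point $\bz^{*}_{\varepsilon} \in \bar{D}_{\varepsilon}$, which corresponds via $U$ to a $T$-periodic solution $\gamma_{int}$ of~\eqref{eq:e1}; by construction $\gamma_{int}$ lies inside the solid region enclosed by the invariant torus.

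The principal obstacle is that both the limit map $\Psi_{T\varepsilon^{\ell}}$ and its normal-hyperbolicity gap collapse as $\varepsilon \to 0$, so standard Fenichel or Hirsch–Pugh–Shub persistence statements cannot be invoked off the shelf with an $\varepsilon$-independent constant. The technical heart of the argument is a quantitative, uniform-in-$\varepsilon$ continuation estimate showing that the $O(\varepsilon^{\ell+1})$ perturbation is indeed dominated by the $O(\varepsilon^{\ell})$ gap in the precise sense required for persistence; adapting Chicone–Liu's rescaling trick to the higher-order averaging setting—where the relevant expansion is $P_\varepsilon = \Psi_{T\varepsilon^\ell} + O(\varepsilon^{\ell+1})$ rather than the first-order case $\ell=1$—is where the real work of the proof lies.
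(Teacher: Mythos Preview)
Your overall strategy matches the paper's: apply Proposition~\ref{prop2} to reach the averaged form, recognize that the normal hyperbolicity gap is $O(\varepsilon^\ell)$ while the perturbation is $O(\varepsilon^{\ell+1})$, and invoke a Chicone--Liu continuation argument to overcome the degeneration of hyperbolicity as $\varepsilon\to 0$. The Brouwer argument for the interior $T$-periodic solution is also exactly what the paper does.

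The implementation differs, however, and your sketch omits the device that actually makes the continuation go through. You propose to work with the stroboscopic Poincar\'e map $P_\varepsilon$ and compare it to $\Psi_{T\varepsilon^\ell}$; but Chicone--Liu's method is formulated for continuous-time systems, and there is no off-the-shelf discrete analog you can cite. The paper instead rescales time via $\tau=\varepsilon^\ell t$, passes to moving orthonormal coordinates along $\gamma$, and then introduces the two-parameter family
\[
E^{\varepsilon,\mu}:\quad \rho'=f(\rho,\sigma)+\varepsilon R(\rho,\sigma,\tau/\mu^\ell,\mu),\quad \sigma'=g(\rho,\sigma)+\varepsilon S(\rho,\sigma,\tau/\mu^\ell,\mu),\quad \tau'=1,
\]
decoupling the perturbation amplitude $\varepsilon$ from the fast frequency $1/\mu^\ell$. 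For fixed $\mu$, this is a regular perturbation in $\varepsilon$, so Fenichel applies locally; the continuation then runs over $\varepsilon\in[0,\mu]$, and the hard work (Proposition~\ref{prop:main} and Lemmas~\ref{lemmaestimatesh}--\ref{lemmahsigmatauhtautau}) is to close the interval at $\varepsilon=\bar\varepsilon_\mu$ via uniform $C^2$ bounds on the graph functions $h^\varepsilon$. Your final paragraph correctly flags that this uniform estimate is where the proof lives, but your Poincar\'e-map framing does not expose the mechanism---the $(\varepsilon,\mu)$-decoupling---by which it is obtained. Without that, ``apply Chicone--Liu'' remains a hope rather than an argument.
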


The first step in order to prove Theorem \ref{theoremA} consists in obtaining a differential system equivalent to the differential equation \eqref{eq:e1} for which an adaptation of the method of continuation, employed in \cite{MR1740943}, can be applied. Accordingly, Section \ref{sec:ts} combines near-identity transformations, provided by the averaging theory, with a moving orthonormal coordinate system along periodic solutions \cite{haleordinary} to obtain such equivalent differential system. Theorem \ref{theoremA} is then restated as Theorem \ref{theoremB}. Section \ref{sec:mcproof} is devoted to the proofs of Theorems \ref{theoremA} and \ref{theoremB}. 

\subsection{Application to jerk differential equations} 
In mechanics, and other areas of physics as well, if $x(t)$ denotes the {\it position} of a particle, then  $\dot{x}(t)$ and $\ddot{x}(t)$ are the {\it velocity} and {\it acceleration} of the particle, respectively. In this context, the quantity $\dddot{x}(t)$ is called {\it jerk}   of the particle and a third-order differential equation of the form $\dddot{x}=F\left(\ddot{x},\dot{x},x\right)$ is called \textit{jerk differential equation} (see, for instance, \cite{schot1978jerk}).  Many mechanical models and a wide variety of classical three-dimensional systems can be represented in this form. For instance, the Nos\'e-Hoover oscillator \cite{sprott2010}, the Lorenz system, and the R\"ossler system \cite{linz1997}. 

Here, in light of Theorem \ref{theoremA}, we shall provide in Proposition \ref{exprop} sufficient conditions for the existence of an attracting normally hyperbolic invariant torus living in the 3D space $(x,\dot x,\ddot x)$ for the following family of jerk differential equations
\begin{equation}\label{example}
\dddot x=-\dot x+\e^{N-1} P(x,\dot x,\ddot x)+\e^N Q(x,\dot x,\ddot x)+\e^{N+1} R(x,\dot x,\ddot x,\e),
\end{equation}
where  $\e$ is a small positive parameter and $P$, $Q,$ and $R$ are $C^r$, $r\geq 2$, functions. In addition, the following conditions will be assumed on $P$ and $Q$: 
\begin{itemize}
\item[{\bf H1.}] the following functions have vanishing average for every $z$ and $r$,
\[
\T\mapsto P(r\sin\T-z,r\cos\T,-r\sin\T)\,\,\text{ and }\,\, \T\mapsto P(r\sin\T-z,r\cos\T,-r\sin\T) \sin(\T)
\]
\item[{\bf H2.}] and
\[
\begin{aligned}
 Q(x,\dot x,\ddot x)=&\ddot x\big(-x-\ddot x+(\dot x^2+\ddot x^2-2)\big(1-(x+\ddot x)^2-\big(\dot x^2+\ddot x^2-2\big)^2\big)\big)\\&+2\ddot x^2(\dot x^2+\ddot x^2-2).
\end{aligned}
\]
\end{itemize}

\begin{proposition}\label{exprop}
Assume that {\bf H1} and {\bf H2} hold. Then,
for any integer $N\geq 3$ and $\e>0$ sufficiently small, the jerk differential equation \eqref{example} has a normally hyperbolic attracting invariant torus in the space $(x,\dot x,\ddot x)$ which converges, as $\e$ goes to $0$, to the limiting torus (see Figure \ref{unpertorus})
\[
\mathbb{T}=\{(\sqrt{r}\sin\T-z,\sqrt{r}\cos\T,-\sqrt{r}\sin\T):\, (r,z)\in \mathbb{S}^1+(2,0),\, \T\in[0,2\pi]\}.
\]
\end{proposition}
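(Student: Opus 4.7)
\noindent\textbf{Proof plan for Proposition \ref{exprop}.}
The strategy is to put the jerk equation \eqref{example} into the standard form \eqref{eq:e1} and apply Theorem \ref{theoremA}. Rewriting \eqref{example} as a first-order system for $(x_1,x_2,x_3)=(x,\dot x,\ddot x)$, the unperturbed flow is $\dot x_1 = x_2$, $\dot x_2 = x_3$, $\dot x_3 = -x_2$, whose solutions fill a foliation of $\R^3$ by invariant cylinders parametrized by the coordinates
\[
x = r\sin\T - z,\qquad \dot x = r\cos\T,\qquad \ddot x = -r\sin\T,
\]
with $\dot\T = 1$ on the unperturbed orbits. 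I would first carry out this change of variables and use $\T$ (which on the perturbed system satisfies $\dot\T = 1 + O(\e^{N-1})$ and is therefore invertible as a time variable for small $\e$) as the new independent variable. A short elimination yields a two-dimensional non-autonomous system in $(r,z)$ of the form $dr/d\T,\;dz/d\T = \e^{N-1}F_{N-1}(\T,r,z) + \e^{N}F_N(\T,r,z) + O(\e^{N+1})$, with $F_{N-1}$ determined by $P$ and $F_N$ (to the order relevant for $N\geq3$) determined by $Q$ in the explicit form $F_N = (-\sin\T,\, -1)\,Q(r\sin\T-z,r\cos\T,-r\sin\T)$. This is precisely \eqref{eq:e1} with period $T=2\pi$, relabeled so that $F_1=\cdots=F_{N-2}=0$.

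The next step is to identify the first non-vanishing averaged function. Hypothesis H1 is engineered to ensure $\f_{N-1}\equiv0$: indeed $\f_{N-1}(r,z) = \int_0^{2\pi}F_{N-1}d\T$, whose components are, up to a sign, the two averages $\int_0^{2\pi} P\,d\T$ and $\int_0^{2\pi}\sin\T\,P\,d\T$, both zero by H1. Having $\f_0=\cdots=\f_{N-1}=0$, Proposition \ref{prop1} gives $\f_N=T\bg_N$, and because $F_1=\cdots=F_{N-2}=0$ forces $y_1\equiv0$ in the recursion \eqref{yi}, the Bell-polynomial contributions collapse and $\f_N(r,z)$ reduces to $\int_0^{2\pi}F_N\,d\T$. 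In these coordinates one has the key identities $x+\ddot x = -z$ and $\dot x^2+\ddot x^2 = r^2$, which turn the expression for $Q$ in H2 into a manageable trigonometric polynomial in $\T$; direct integration then delivers the guiding system
\[
\dot r = \tfrac{r}{2}\bigl(z + (r^2-2)(1-z^2-(r^2-2)^2)\bigr),\qquad \dot z = -r^2(r^2-2).
\]

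The main obstacle is then to exhibit an attracting hyperbolic limit cycle of this guiding system and to match it with the circle $(r-2)^2+z^2=1$ appearing in the statement. Setting $u = r^2-2$ and $\rho^2 = u^2 + z^2$, a direct computation gives $\rho\dot\rho = (u+2)u^2(1-\rho^2)$, so $\rho=1$ is an invariant circle of the $(u,z)$-system. On this circle, writing $u=\cos\phi$, $z=\sin\phi$, one finds $\dot\phi = -(\cos\phi+2)<0$, so the circle is traversed monotonically and is a periodic orbit. Hyperbolicity is checked by computing the characteristic exponent $\oint(\p_u\dot u+\p_z\dot z)\,dt$ along this cycle; the non-trivial contribution reduces to $-2\int_0^{2\pi}\cos^2\psi\,d\psi = -2\pi<0$, so the cycle is attracting hyperbolic. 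Since $u=r^2-2$, this cycle corresponds exactly to $(r-2)^2+z^2=1$ in the proposition's convention (where the parametrization uses $\sqrt{r}$). Theorem \ref{theoremA} then yields, for each small $\e>0$, a normally hyperbolic attracting invariant torus in the extended phase space; pushing back through the near-identity averaging transformation and the $(r,z,\T)\mapsto(x,\dot x,\ddot x)$ change of coordinates produces the invariant torus converging to $\mathbb{T}$, as asserted.
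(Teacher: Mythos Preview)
Your proof plan is correct and follows essentially the same route as the paper: change to angular coordinates, use $\theta$ as the new time to reach the standard form, verify via \textbf{H1} that $\f_{N-1}=0$ and via the recursion \eqref{yi} that the Bell-polynomial terms vanish so $\f_N=\int_0^{2\pi}F_N\,d\theta$, compute the guiding system from \textbf{H2}, and then exhibit the attracting hyperbolic limit cycle before invoking Theorem~\ref{theoremA}.

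The only cosmetic difference is your choice of radial coordinate: you parametrize with $\dot x^2+\ddot x^2=r^2$, whereas the paper uses $\dot x^2+\ddot x^2=r$ (hence the $\sqrt{r}$ in the statement). This makes your guiding system look different from the paper's, but your substitution $u=r^2-2$ is exactly the paper's $\rho=r-2$ after its time rescaling, and both reduce to $\rho'=z+\rho(1-\rho^2-z^2)$, $z'=-\rho$ on which $\mathbb{S}^1$ is the limit cycle. Your divergence integral $-2\pi$ is the same hyperbolicity check the paper performs via $\int_0^T\mathrm{div}\,X\,dt$, just carried out in the $(u,z)$ chart rather than after an explicit time rescaling; since the Floquet determinant is coordinate-independent, this is equivalent.
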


\begin{figure}[H]
	\begin{overpic}[width=6.5cm]{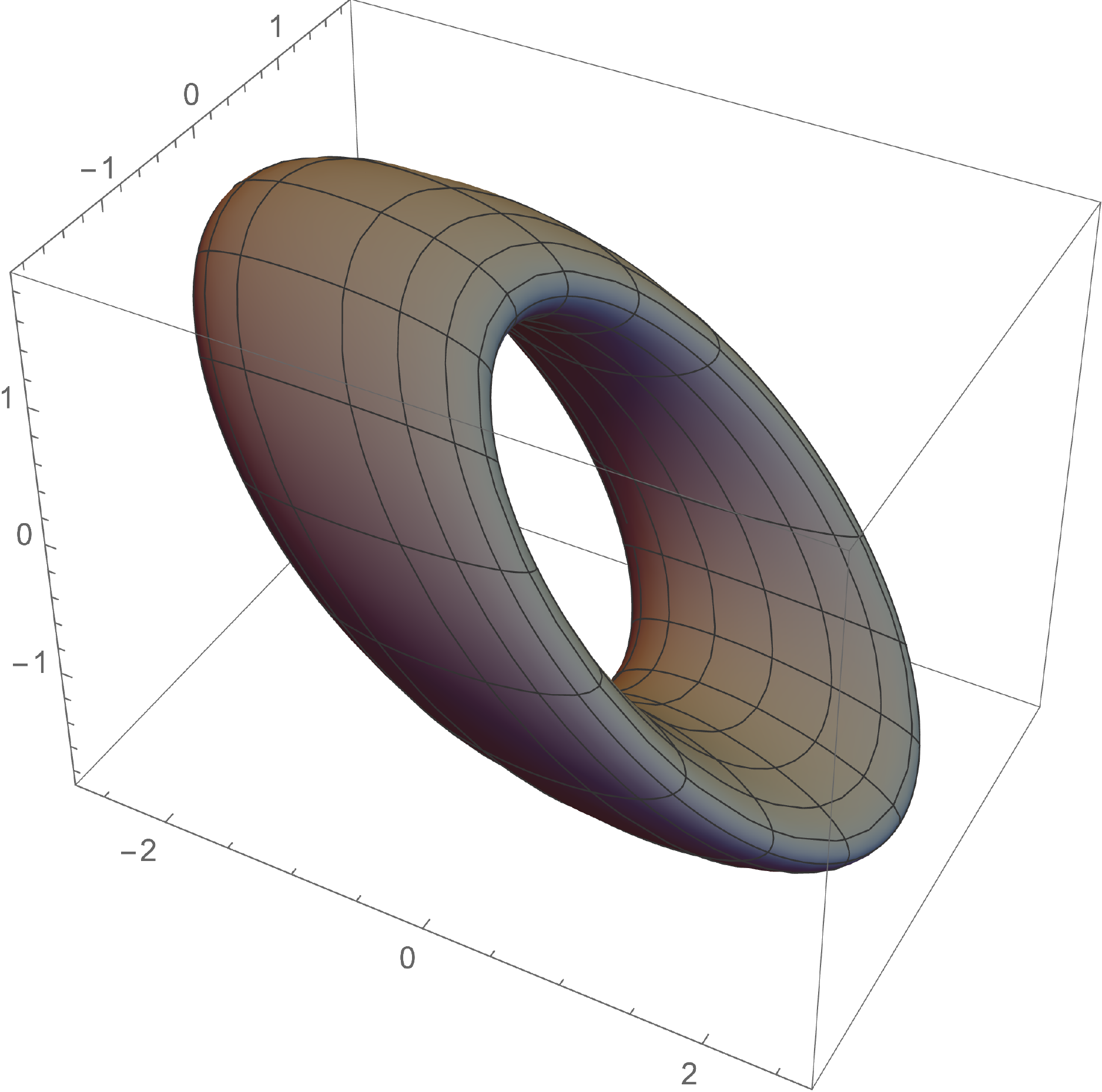}
	\put(33,8){$x$}
	\put(12,90){$\dot x$}
	\put(-3,49){$\ddot x$}
	\put(70,65){$\mathbb{T}$}
	\end{overpic}
	\caption{Limiting Torus $\mathbb{T}$ of the jerk differential equation \eqref{example}.}
	\label{unpertorus}	
\end{figure}

Proposition \ref{exprop} is proven in Section \ref{sec:app}.

\begin{example}
One can see that condition {\bf H1} holds for $P(x,\dot x,\ddot x)=-\dot x ^3$. Thus, Proposition \ref{exprop} can be applied to get the existence of a normally hyperbolic attracting invariant torus of the jerk differential equation \eqref{example}  in the space $(x,\dot x,\ddot x)\in\R^3$ for each $N\geq 2$ and $\e>0$. In Figure \ref{torusfig}, assuming $R=0$, $N=5$, and $\e=1/5$, we have used the software Mathematica to simulate a solution of the jerk differential equation approaching to the invariant torus.

\begin{figure}[H]
	\begin{overpic}[width=13cm]{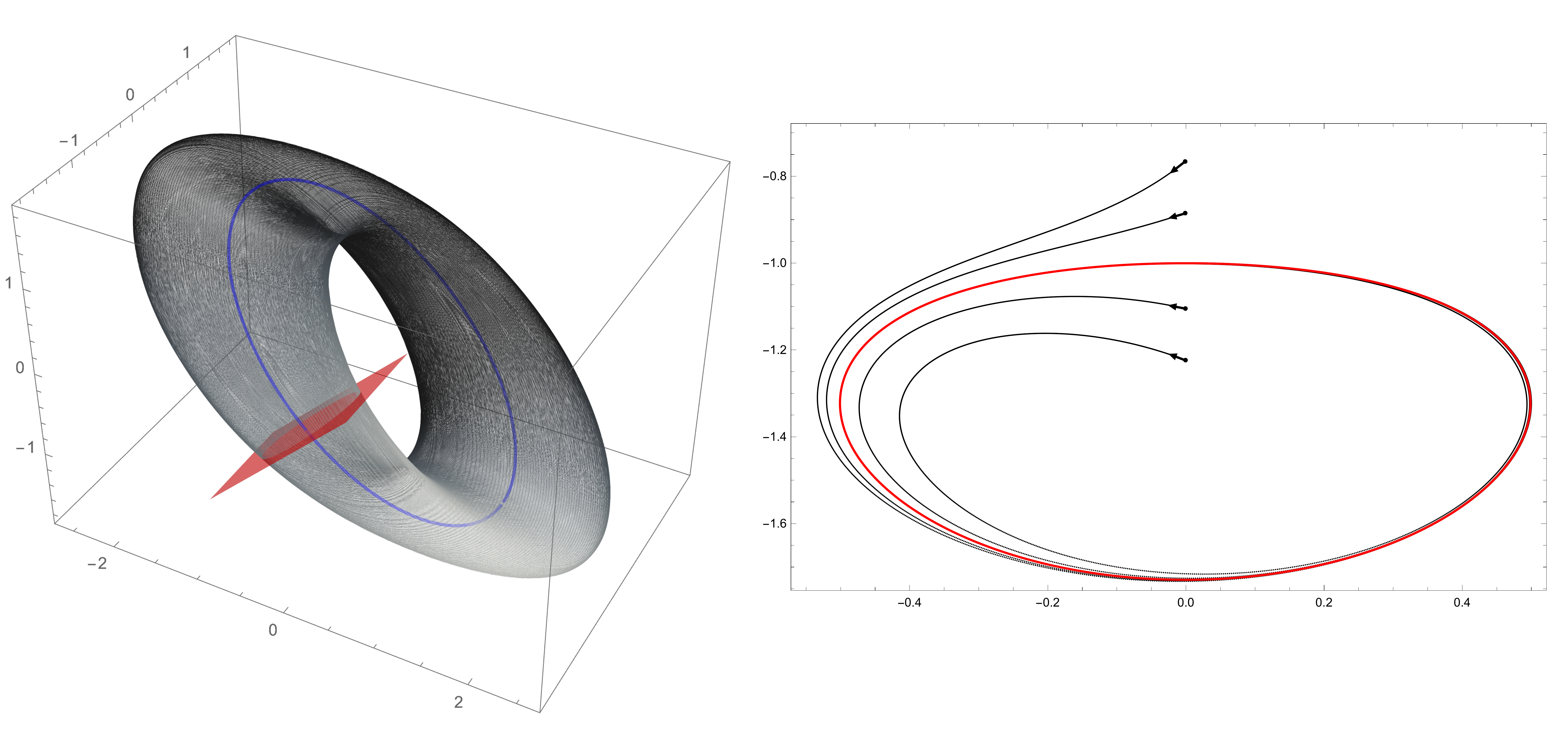}
	\end{overpic}
	\caption{Invariant torus of the differential equation \eqref{example} in the $(x,\dot x,\ddot x)$-space predicted by Proposition \ref{exprop}, assuming $R=0$, $N=5$, and $\e=1/5$. The left figure shows a particular trajectory of \eqref{example} evolving forward in time. The thick line corresponds to the periodic solution. The right figure depicts several trajectories of the Poincar\'{e} map defined in the section $\{x=z, y<0\}$. The stable invariant torus corresponds to a stable invariant closed curve.}
	\label{torusfig}	
\end{figure}

\end{example}

\section{Transformed differential system}\label{sec:ts}
In this section, by combining the near-identity transformation, provided by Proposition \ref{truncatedintro}, with a moving orthonormal coordinate system along periodic solutions \cite[Chapter VI]{haleordinary}, the differential equation \eqref{eq:e1}, under the hypotheses of Theorem \ref{theoremA}, will be transformed into an equivalent differential system for which an adaptation of the techniques employed in \cite{MR1740943} can be applied.

Consider the differential equation \eqref{eq:e1}. Let $\ell\in\{1,\ldots,N\}$ satisfying $\f_0=\f_1=\cdots\f_{\ell-1}=0$ and $\f_{\ell}\neq0$. By Proposition \eqref{prop2}, there exists a $T$-periodic near-identity transformation \eqref{avtrans} that transforms the differential equation \eqref{eq:e1} into
\begin{equation}\label{eq:trans1}
\dot \bz=\e^{\ell}\dfrac{1}{T}\f_{\ell}(\bz)+\e^{\ell+1} r_{\ell}(t,\bz,\e).
\end{equation}
By applying the time rescaling $\tau=\varepsilon^\ell \, t$, the differential equation \eqref{eq:trans1} becomes:
\begin{align} \label{systemrescaled}
    \bz'=\dfrac{1}{T}\f_\ell(\bz) + \varepsilon \, r_{\ell}(\tau/\e^{\ell},\bz,\e),
\end{align}
where, now, prime $'$ denotes derivative with respect to $\tau$.

Now, let $\varphi(\tau)$ be the hyperbolic limit cycle of the guiding system
$$
    \bz'=\dfrac{1}{T}\f_\ell(\bz).
$$
Without loss of generality, assume that $\varphi$ is $2\pi$-periodic.
In what follows, we shall transform the differential equation \eqref{systemrescaled} by using a moving orthonormal coordinate system along $\varphi(\tau)$  provided in \cite[Chapter VI]{haleordinary}.  Let $\big(v_1(\tau),v_2(\tau)\big)={\varphi'}(\tau)/{|{\varphi'}(\tau)}|$. From Theorem \cite[Theorem 1.2]{haleordinary},  there exists $r_0>0$ such that, for all $0\leq |\rho|\leq r_0$, the transformation
\begin{equation}\label{eq:moc}
\bz=\varphi(\sigma)+Z(\sigma)\rho,
\end{equation}
with $Z(\sigma)=({-v_2(\sigma),v_1(\sigma)})$, applied to the differential equation \eqref{systemrescaled} yields the equivalent differential system:
\begin{equation}\label{eq:trans2}
\begin{aligned}
\rho'&=A(\sigma)\rho+f_2(\rho, \sigma)+\e B(\rho, \sigma)r_{\ell}(\tau/\e^{\ell},\varphi(\sigma)+Z(\sigma)\rho,\e),\\
\sigma'&=1+f_1(\rho, \sigma)+\e \big\langle h(\rho, \sigma)\, ,\,r_{\ell}(\tau/\e^{\ell},\varphi(\sigma)+Z(\sigma)\rho,\e)\big\rangle.
\end{aligned}
\end{equation}
Here, $A(\sigma)\in\R$, $B(\rho, \sigma)$ is a $1\times 2$ matrix, $f_2(\rho, \sigma),\,h(\rho, \sigma)\in\R^2$, and  $f_1(\rho, \sigma)\in \R$, all of them $C^r$ functions, with $r\geq 2$, and $2\pi$-periodic in the variable $\sigma$. Moreover,
\begin{equation*}\label{eqbar}
| f_1(\rho, \sigma)|=\CO(|\rho|), \quad f_2(0, \sigma)=0, \quad \dfrac{\partial  f_2(0,\sigma )}{\partial \rho}=0.
\end{equation*}
Now, denoting
\begin{align*}
& f(\rho, \sigma)=A(\sigma)\rho+f_2(\rho, \sigma),\\
& g(\rho,\sigma)=1+f_1(\rho, \sigma),\\
& R(\rho, \sigma, t,\e)= B(\rho, \sigma)r_{\ell}(t,\varphi(\sigma)+Z(\sigma)\rho,\e),\,\,\text{and}\\
& S(\rho,\sigma,t,\e)=\big\langle h(\rho, \sigma)\, ,\,r_{\ell}(t,\varphi(\sigma)+Z(\sigma)\rho,\e)\big\rangle,
\end{align*}
the differential system \eqref{eq:trans2} writes
\begin{align} \label{systempolar}
    &\rho' = f(\rho,\sigma) + \varepsilon R(\rho,\sigma,\tau/\varepsilon^\ell,\varepsilon), \nonumber \\ 
    &\sigma' = g(\rho,\sigma) + \varepsilon S(\rho,\sigma, \tau/\varepsilon^\ell, \varepsilon),
\end{align}
where $f$, $g$, $R$, and $S$ are $C^r$ functions, with $r\geq 2$, $2\pi$-periodic in the variable $\sigma$, and $T$-periodic in the variable $\tau$.

Notice that the guiding system \eqref{guiding} is equivalent to the differential system
\begin{align} \label{systempolartruncated}
    &\rho' = f(\rho,\sigma), \nonumber\\
    &\sigma' = g(\rho,\sigma),
\end{align}
for which, due to the moving orthonormal coordinate transformation \eqref{eq:moc}, the attracting hyperbolic limit cycle of \eqref{guiding} is, now, given by $\Gamma=\{(0,\sigma):\sigma\in\s^1\}$.

In this new setting, Theorem \ref{theoremA} follows as a consequence of the next result:
\begin{mtheorem} \label{theoremB}
Suppose that $\Gamma=\{(0,\sigma):\sigma\in\s^1\}$ is an attracting hyperbolic limit cycle of the differential system \eqref{systempolar}. Then, for $\varepsilon>0$ sufficiently small, the differential system \eqref{systempolar} has a $T$-periodic solution $\Gamma_{int}$ and a $r$-normally hyperbolic attracting invariant manifold in the extended phase space that is the graph of a $C^r$ function of the angular variables $\sigma$ and $\tau$. In addition, this manifold surrounds the periodic solution $\Gamma_{int}$ and converges to $\Gamma\times\s^1$ as $\e$ goes to $0$.
\end{mtheorem}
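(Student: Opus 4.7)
I would adapt the method of continuation of Chicone and Liu~\cite{MR1740943} to this time-periodic setting. Reverting to the original time $t = \tau/\varepsilon^\ell$ and adjoining $\dot t = 1$, system \eqref{systempolar} becomes an autonomous $C^r$ vector field on the solid torus $\mathbb{R}\times\mathbb{S}^1\times(\mathbb{R}/T\mathbb{Z})$ with coordinates $(\rho,\sigma,t)$, whose $(\rho,\sigma)$-components are of order $\varepsilon^\ell$. I would seek an invariant torus close to $\Gamma\times\mathbb{S}^1$ as a graph $\rho = u(\sigma,t)$, with $u$ being $C^r$, $2\pi$-periodic in $\sigma$ and $T$-periodic in $t$. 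Equating $\dot u = (\partial_\sigma u)\dot\sigma + \partial_t u$ with $\dot\rho$ along the flow yields the quasilinear invariance PDE
\[
\partial_t u + \varepsilon^\ell\bigl(g(u,\sigma) + \varepsilon S(u,\sigma,t,\varepsilon)\bigr)\partial_\sigma u = \varepsilon^\ell\bigl(f(u,\sigma) + \varepsilon R(u,\sigma,t,\varepsilon)\bigr),
\]
which admits $u\equiv 0$ as a solution in the truncated ($\varepsilon R = \varepsilon S = 0$) case, since $f(0,\sigma) = 0$.

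Linearizing at $u\equiv 0$ via $f(\rho,\sigma) = A(\sigma)\rho + O(\rho^2)$, the principal linear operator of the PDE is
\[
Lu = \partial_t u + \varepsilon^\ell g(0,\sigma)\,\partial_\sigma u - \varepsilon^\ell A(\sigma)\,u.
\]
Its characteristic curves on the $(\sigma,t)$-torus follow the tangential flow $\dot\sigma = \varepsilon^\ell g(0,\sigma)$, $\dot t = 1$ along $\Gamma\times\mathbb{S}^1$; by the attracting hyperbolicity of $\Gamma$ in the guiding system, the multiplier of the homogeneous characteristic ODE over one $\sigma$-cycle equals $\exp\bigl(\oint A(\sigma)/g(0,\sigma)\,d\sigma\bigr) < 1$. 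Integrating along these characteristics inverts $L$ on suitable $C^r$ spaces of $(\sigma,t)$-periodic functions with operator norm of order $\varepsilon^{-\ell}$, whereas the remaining nonlinear and $\varepsilon$-perturbative terms of the PDE are either quadratic in $u$ or of size $O(\varepsilon^{\ell+1})$. The invariance equation then rewrites as a fixed-point problem $u = L^{-1}\mathcal N(u,\varepsilon)$ that contracts the ball $\{\|u\|_{C^r}\le C\varepsilon\}$ for $\varepsilon$ sufficiently small, producing a unique $C^r$ solution $u_\varepsilon$ with $u_\varepsilon\to 0$ as $\varepsilon\to 0$. The resulting torus inherits attracting $r$-normal hyperbolicity from $\Gamma\times\mathbb{S}^1$ because its normal linearization is an $\varepsilon$-perturbation of $\varepsilon^\ell A(\sigma)$.

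For the interior periodic orbit, consider the Poincaré section $\{t=0\}$: the invariant torus projects to a closed invariant curve of the time-$T$ Poincaré map $P_\varepsilon$, which is an orientation-preserving $C^r$ diffeomorphism $C^0$-close to the identity for small $\varepsilon$, hence maps the topological disc bounded by that curve into itself; Brouwer's theorem supplies a fixed point of $P_\varepsilon$ in the disc, corresponding to the $T$-periodic orbit $\Gamma_{int}$ surrounded by the torus. The principal technical obstacle is the separation of scales in $L$: the derivative $\partial_t$ carries no small factor whereas $\varepsilon^\ell\partial_\sigma$ and $\varepsilon^\ell A u$ do, so a naive inversion of $L$ in unweighted norms loses control as $\varepsilon\to 0$. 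The Chicone--Liu resolution, adapted here, is to construct $L^{-1}$ along the characteristic foliation of the truncated field \eqref{systempolartruncated}, where the fixed spectral gap from the Floquet multiplier of $\Gamma$ is precisely what compensates the $\varepsilon^{-\ell}$ growth of $\|L^{-1}\|$ against the $\varepsilon^{\ell+1}$-size perturbation, so that the fixed-point iteration closes and the loss of derivatives stays within the $C^r$, $r\ge 2$, regularity available.
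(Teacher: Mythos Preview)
Your proposal has a genuine gap in the fixed-point argument. The nonlinearity $\mathcal N(u,\varepsilon)$ you isolate contains the terms $\varepsilon^\ell\bigl(g(u,\sigma)-g(0,\sigma)\bigr)\partial_\sigma u$ and $\varepsilon^{\ell+1}S\,\partial_\sigma u$, both of which involve $\partial_\sigma u$ and therefore map $C^r$ only into $C^{r-1}$. Your inverse $L^{-1}$, built by integrating along the characteristics of the \emph{fixed} linear field $\dot\sigma=\varepsilon^\ell g(0,\sigma)$, $\dot t=1$, does not regain that lost $\sigma$-derivative: it smooths along characteristics but not transversally, so $L^{-1}:C^{k}\to C^{k}$ with no gain. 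Hence $\Phi=L^{-1}\mathcal N$ maps $C^r\to C^{r-1}$ and the iteration cannot close in any $C^k$ space. The remark that ``the loss of derivatives stays within the $C^r$, $r\ge 2$, regularity available'' does not address this, since one derivative is lost at every step of the iteration, not once overall. To make a direct argument work you would need to absorb the full transport $\varepsilon^\ell G(u,\cdot)\partial_\sigma u$ into a $u$-dependent principal part---essentially the Lyapunov--Perron integral equation along the nonlinear characteristics, a different and substantially more delicate construction---or invoke a Nash--Moser scheme; neither is what you sketched.

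This is also not the Chicone--Liu resolution. The paper follows their method of continuation, which sidesteps both the scale separation and the derivative loss by introducing a two-parameter family $E^{\varepsilon,\mu}$ in which the fast frequency $\tau/\mu^\ell$ is decoupled from the perturbation amplitude $\varepsilon$. For fixed $\mu$ one shows that the set $A^\mu\subset[0,\mu]$ of $\varepsilon$ for which an $r$-normally hyperbolic invariant torus (given as a $C^r$ graph $h^\varepsilon$) exists is nonempty, open by Fenichel persistence, and closed; closedness is the technical core and is obtained from uniform $C^2$ bounds on the family $\{h^\varepsilon\}$ (so that an Arzel\`a--Ascoli limit exists in $C^1$) followed by a direct verification of the generalized Lyapunov-type numbers at the endpoint. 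Taking $\varepsilon=\mu$ then gives the theorem, with no fixed-point iteration on function spaces at all. Your Brouwer argument for the interior periodic orbit on the stroboscopic section is correct and matches the paper.
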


\section{Method of continuation and proof of the main results}\label{sec:mcproof}

This section is devoted to the proof of Theorem \ref{theoremB}. We start by presenting the Method of Continuation in Section \ref{sec:mc}. In Section \ref{sec:loc}, the justification of the convergence of the perturbed tori as $\varepsilon$ goes to zero is provided. Theorem \ref{theoremB} is then proven in Section \ref{sec:proofB} under the assumption of validity of a key result, whose proof is presented in Section \ref{sec:proofmain}. Some essential lemmas, which are used throughout the proof in Section \ref{sec:proofmain}, are presented in Section \ref{sec:prel}. Finally, an Appendix is provided, containing technical lemmas that, albeit not directly related to the problem, are crucial for the argument.

\subsection{Method of continuation}\label{sec:mc}
We define $E^{\varepsilon,\mu}$, an auxiliary family of differential systems given by
\begin{equation}\label{eq:family}
	E^{\varepsilon,\mu}:
	\begin{cases}
		\rho'=f(\rho,\sigma) + \varepsilon R(\rho,\sigma,\tau/\mu^\ell,\mu),  \\
		\sigma'= g(\rho, \sigma) + \varepsilon S(\rho, \sigma, \tau/\mu^\ell, \mu),  \\
		\tau'=1,
	\end{cases}
\end{equation}
where $f$, $g$, $R$, and $S$ are the functions appearing \eqref{systempolar}, thus of class $C^r$, $r\geq2$. Also, periodicity allows us to view $\tau$ as an angular variable modulo $T \mu^\ell$.


In order to prove Theorem \ref{theoremB}, define, for each $\mu>0$, the set $A^\mu$ as the maximal interval contained in $[0,\mu]$ with left endpoint at $0$ such that, for $\varepsilon \in A^\mu$, system $E^{\varepsilon,\mu}$ has an invariant manifold $M(\varepsilon,\mu)$ satisfying the following properties:
\begin{itemize}
\item $M(\varepsilon,\mu)$ is $r$-normally hyperbolic invariant manifold;
\item $M(\varepsilon,\mu)$ is the graph $\rho=h^{\varepsilon}(\sigma,\tau)$ of a $C^r$ function $h^\varepsilon$ of the angular variables $\sigma$ and $\tau$.
\end{itemize}
Notice that $M(\varepsilon,\mu)$ is an invariant torus.

The method of continuation, employed in \cite{MR1740943}, uses the connectedness of $[0,\mu]$ to ensure that $A^\mu = [0,\mu]$ by showing that $A^\mu$ is a non-empty subset of $[0,\mu]$ which is also relatively open and closed.

First, note that $A^\mu$ is non-empty, because $0 \in A^\mu$ by hypothesis. The relative openness of $A^\mu$ is guaranteed by Fenichel's classic result about persistence of $r$-normally hyperbolic invariant manifolds:

\smallskip

\noindent{\bf Fenichel's Theorem} {\it
	Let $f$ be a $C^r$ vector field on $\mathbb{R}^n$, $r\geq1$. Let $M$ be a compact, connected $C^r$ manifold properly embedded in $\mathbb{R}^n$ and invariant under $f$. Suppose that $M$ is $r$-normally hyperbolic. Then, for any vector field $g$ in some $C^r$ neighbourhood of $f$, there is a $r$-normally hyperbolic $C^r$ manifold $M_g$, invariant under $g$, $C^r$ diffeomorphic to $M$ and $C^r$ near $M$.
}

\smallskip

\noindent Indeed, if $\varepsilon \in A^\mu$, Fenichel's Theorem ensures that there is an open interval $I_\varepsilon$ containing $\varepsilon$ such that $I_\varepsilon \cap [0,\mu] \subset A^\mu$. 

Finally, it only remains to show that $A^\mu$ is relatively closed. This will be achieved by proving the following result:

\begin{proposition}\label{prop:main}
	Suppose that the differential system (\ref{systempolartruncated}) has an attracting hyperbolic limit cycle $\Gamma=\{(0,\sigma):\sigma\in\s^1\}$. Let $\bar{\varepsilon}_{\mu}$ be the least upper bound of $A^\mu$. Then, there is $\mu_s>0$ such that, if $\mu \in (0,\mu_s]$, system $E^{\bar{\varepsilon}_{\mu},\mu}$ has an invariant $r$-normally hyperbolic manifold $M(\bar{\varepsilon}_{\mu},\mu)$ given as graph of a $C^r$ function of the angular variables. In particular, $\bar{\varepsilon}_{\mu}\in A^{\mu}.$
	\end{proposition}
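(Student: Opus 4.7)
Let $\varepsilon_n \in A^\mu$ be a sequence with $\varepsilon_n \nearrow \bar{\varepsilon}_\mu$. For each $n$, by definition of $A^\mu$, there is an $r$-normally hyperbolic invariant manifold $M(\varepsilon_n,\mu) \subset \{|\rho|\le r_0\}$ given as the graph of a $C^r$ function $h^{\varepsilon_n}\colon \mathbb{S}^1_\sigma \times \mathbb{S}^1_\tau \to \mathbb{R}$, where $\tau$ is viewed modulo $T\mu^\ell$. The strategy is to extract a limit $h^* = \lim h^{\varepsilon_{n_k}}$, show that $h^*$ is of class $C^r$, that its graph is invariant under $E^{\bar{\varepsilon}_\mu,\mu}$, and that $r$-normal hyperbolicity survives the limit. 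The connectedness argument from Section \ref{sec:mc} then forces $\bar{\varepsilon}_\mu \in A^\mu$.

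The central step, which I would isolate as the ``key result'' of Section \ref{sec:proofmain}, is to derive \emph{uniform} estimates, in $\varepsilon\in[0,\mu]$, for the graphs $h^\varepsilon$ in the $C^r$ norm, together with uniform exponential contraction rates in the normal direction. I would set this up as a graph transform (Hadamard style) inside the solid torus $\{|\rho|\le r_0\}\times \mathbb{S}^1_\sigma\times\mathbb{S}^1_\tau$. The unperturbed extended system has $\Gamma\times\mathbb{S}^1 = \{0\}\times\mathbb{S}^1_\sigma\times\mathbb{S}^1_\tau$ as a normally hyperbolic invariant manifold, and the linearization of the $\rho$-equation along $\Gamma$ is the attracting hyperbolic Floquet operator $A(\sigma)$ with uniform negative average, so one has cone fields and uniform exponential rates on the unperturbed system. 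The content of the threshold $\mu_s$ is to choose $\mu$ small enough so that, for every $\varepsilon\in[0,\mu]$, the graph transform associated with $E^{\varepsilon,\mu}$ maps a fixed ball in the $C^r$ space of graphs into itself, preserves the cone fields, and stays a contraction with rates bounded away from the tangential rates on $\Gamma\times\mathbb{S}^1$, independently of $\varepsilon$. This yields uniform $C^r$ bounds on $h^\varepsilon$ and uniform normal/tangential spectral gaps enforcing $r$-normal hyperbolicity.

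With these estimates, the convergence step is standard: by Arzelà--Ascoli the uniformly $C^r$-bounded family $\{h^{\varepsilon_n}\}$ has a subsequence $h^{\varepsilon_{n_k}}$ converging in $C^{r-1}$ to a function $h^*$, and, by weak-$\ast$ compactness of the $r$-th derivatives in $L^\infty$, $h^*\in C^r$. Invariance of $\mathrm{graph}(h^*)$ under $E^{\bar{\varepsilon}_\mu,\mu}$ follows from continuous dependence of solutions on $(\varepsilon,$ initial data$)$: each trajectory of $E^{\varepsilon_{n_k},\mu}$ issuing from $(\sigma_0,\tau_0,h^{\varepsilon_{n_k}}(\sigma_0,\tau_0))$ stays on $\mathrm{graph}(h^{\varepsilon_{n_k}})$ for all time, and the limiting trajectory lies on $\mathrm{graph}(h^*)$. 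The $r$-normal hyperbolicity of $\mathrm{graph}(h^*)$ is inherited from the uniform spectral gap estimates, since the asymptotic growth rates are upper/lower semicontinuous with respect to $C^1$ perturbations of the invariant manifold and the governing vector field.

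The principal obstacle is obtaining the uniform $C^r$ estimate in the presence of the fast time $\tau/\mu^\ell$: $\tau$-derivatives of $R$ and $S$ naively produce factors of $\mu^{-\ell}$ that diverge as $\mu\to 0$. The way around this, built into the definition of $E^{\varepsilon,\mu}$, is that $\tau$ is treated as an independent angular variable (with $\tau'=1$), so that the graph transform is estimated in the $(\sigma,\tau)$ variables and the relevant norms of the perturbation are the $C^r$ norms of $R(\rho,\sigma,t,\mu)$ and $S(\rho,\sigma,t,\mu)$ in their own arguments, which are uniformly bounded on compact sets. Once this is organised correctly, the factor $\varepsilon\le\mu$ multiplying $R$ and $S$ makes the perturbation arbitrarily small in $C^r$, and the graph-transform contraction is uniform in $\varepsilon$, delivering the uniform estimates that close the continuation argument.
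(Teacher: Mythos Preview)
Your overall architecture (uniform estimates on $h^\varepsilon$, Arzel\`a--Ascoli, pass to the limit, verify normal hyperbolicity) matches the paper's. But the two places where you wave your hands are exactly the two places where the real work lives, and as written both contain gaps.

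\medskip

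\textbf{The fast time $\tau/\mu^\ell$.} Your last paragraph claims that, since $\tau$ is an independent angle, ``the relevant norms of the perturbation are the $C^r$ norms of $R(\rho,\sigma,t,\mu)$ and $S(\rho,\sigma,t,\mu)$ in their own arguments'' and hence $\varepsilon\le\mu$ makes the perturbation $C^r$-small. This is not true in the $(\sigma,\tau)$ coordinates you say you are using: differentiating $\varepsilon R(\rho,\sigma,\tau/\mu^\ell,\mu)$ in $\tau$ produces $\varepsilon\mu^{-\ell}R_t$, and a $k$-th $\tau$-derivative gives $\varepsilon\mu^{-k\ell}$, which blows up for $k\ge 1$ (recall $\ell\ge 1$). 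So the graph-transform contraction you want in $C^r_{\sigma,\tau}$ is not available for free. The paper confronts this head-on: Lemma~\ref{lemmaidentity} is a purpose-built algebraic identity that rewrites the dangerous term $\dfrac{\varepsilon^2}{\mu^\ell}\dfrac{SS_\tau}{G^2}$ as a telescoping sum whose only bad piece is $\dfrac{\varepsilon^{\ell+1}}{\mu^\ell}(\cdots)$, which is $O(\varepsilon)$ because $\varepsilon\le\mu$. This identity is what makes Lemma~\ref{lemmayepsilon} (the two-sided bound $e^{-C-C\varepsilon|s|}\le y^\varepsilon\le e^{C+C\varepsilon|s|}$) work, and that bound is the engine behind both the $C^2$ estimates and the Lyapunov-number computation. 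Without an analogue of Lemma~\ref{lemmaidentity}, your uniform $C^r$ graph-transform estimate does not close.

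\medskip

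\textbf{Regularity of the limit.} Even granting uniform $C^r$ bounds, your ``weak-$*$ compactness of the $r$-th derivatives in $L^\infty$'' gives $h^*\in W^{r,\infty}=C^{r-1,1}$, not $C^r$. The paper avoids this by a different route: it only proves uniform $C^2$ bounds (Lemmas~\ref{lemmaestimatesh}, \ref{lemmahsigmasigma}, \ref{lemmahsigmatauhtautau}; note the constants in the latter two are $\mu$-dependent, which is fine since $\mu$ is fixed), extracts a $C^1$ limit $h^{\bar\varepsilon_\mu}$ via Arzel\`a--Ascoli (Lemma~\ref{lemmal1}), and then \emph{separately} verifies $r$-normal hyperbolicity by directly estimating Fenichel's generalized Lyapunov type numbers $\nu^{\bar\varepsilon_\mu}(q)<1$ and $\kappa^{\bar\varepsilon_\mu}(q)<1/r$ on the limit manifold. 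The $C^r$ smoothness then comes for free from the abstract $C^r$-section theorem (Hirsch--Pugh--Shub, Theorem~4.1(f)), not from compactness. This bootstrap---$C^1$ limit first, then rates, then $C^r$ regularity from the theory---is what lets the paper get away with only second-order estimates on $h^\varepsilon$.
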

Henceforth, as in the Proposition, $\bar{\varepsilon}_{\mu}$ will always denote the least upper bound of $A^\mu$.

We remark that system $E^{\varepsilon,\mu}$ is strikingly similar to the one studied by Chicone and Liu in \cite{MR1740943}, the only difference being that $\ell=2$ in that work. Thus, the proof of Proposition \ref{prop:main} will be a revision of the proof of an analogous result found in \cite{MR1740943}, namely Proposition 7.2 in that work. Many of the ideas presented therein will naturally be of use in our work, and will be reproduced here in order to provide the adaptations required by the greater freedom allowed in choosing the value of $\ell$. Preliminary results akin to those presented in \cite{MR1740943} are discussed in Section \ref{sec:prel}. The proof of Proposition \ref{prop:main} itself is found in Section \ref{sec:proofmain}.

\subsection{Localizing the invariant torus} \label{sec:loc}
The Method of Continuation outlined above guarantees the existence of invariant tori in the extended phase space of system (\ref{systempolar}). However, in order to ensure the convergence of the invariant tori as $\varepsilon \to 0$, we need a separate argument. First, we present the following lemma, which will be of use later on:
\begin{lemma} \label{lemmaapriori}
	Consider a planar differential equation $x'=f(x)$ with a hyperbolic limit cycle $\Gamma$ of period $\omega>0$. If $\Gamma$ is asymptotically stable, then there is a neighborhood $N \subset \mathbb{R}^2$ of $\Gamma$ and a constant $C>0$ such that any invariant set $\overline{M} \subset N \times \mathbb{S}^1$ of the system
\[
		x'=f(x) + g(x,\tau), \quad \tau'=1, \quad (x,\tau)\in N \times \mathbb{S}^1
\]
	satisfies the following estimate
\[
		\sup \{ d(y, \Gamma \times \mathbb{S}^1): y \in \overline{M} \} \leq C \|g\|_{C^0},
\]
	where $\mathbb{S}^1=\mathbb{R}/(T\mathbb{Z})$, $T\geq0$, and $g:N \times \mathbb{S}^1 \to \mathbb{R}^2$ is any smooth function. \end{lemma}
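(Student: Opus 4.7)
The plan is to reduce the lemma to a Lyapunov estimate on a tubular neighborhood of $\G$. I would construct a smooth non-negative function $V$, comparable to $d(\cdot,\G)^2$, that decays exponentially along the unperturbed flow and whose gradient is controlled by $\sqrt V$. Under the perturbation, $V$ will satisfy a Bernoulli-type differential inequality whose square-root substitution yields a linear inequality with forcing of size $\|g\|_{C^0}$. The desired $C^0$-bound on $\overline{M}$ is then extracted by integrating backward in time along trajectories that, by invariance of $\overline{M}$, remain in the tubular neighborhood for all $t\in\R$.

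To construct $V$, I would work in tubular coordinates $(s,\rho)$ along $\G$, with $s\in\R/(\omega\Z)$ parameterizing $\G$ and $\rho$ the signed normal distance, so $\G=\{\rho=0\}$. The normal variational equation has the form $\dot\rho=a(s)\rho+\CO(\rho^2)$ with $\int_0^\omega a(s)\,ds=-\omega\alpha$ for some $\alpha>0$, by the hyperbolic attracting hypothesis. Letting $A(s):=\int_0^s(a(u)+\alpha)\,du$, which is $\omega$-periodic, I take
\[
V(s,\rho):=\rho^{2}\,e^{-2A(s)}.
\]
A direct computation yields $\nabla V\cdot f=-2\alpha V+\CO(\rho^3)$, so on a thin enough tubular neighborhood $N$ of $\G$ with compact closure inside the chart one has $\nabla V\cdot f\leq-\alpha V$, together with the equivalences $c_1\,d(\cdot,\G)^2\leq V\leq c_2\,d(\cdot,\G)^2$ and the gradient bound $|\nabla V|\leq c_3\sqrt V$.

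Viewing $V$ as a function on $N\times\s^1$ (constant in $\tau$), along the perturbed flow one has
\[
\dot V=\nabla_x V\cdot(f+g)\leq-\alpha V+c_3\sqrt V\,\|g\|_{C^0},
\]
so $W:=\sqrt V$ satisfies $\dot W\leq-(\alpha/2)W+(c_3/2)\|g\|_{C^0}$ wherever $W>0$. For any $y_0\in\overline{M}$, invariance provides a full trajectory $y:\R\to\overline{M}\subset N\times\s^1$ with $y(0)=y_0$. Integrating from $-t$ to $0$ (comparison remains valid through any zero of $W$),
\[
W(y_0)\leq W(y(-t))\,e^{-\alpha t/2}+\tfrac{c_3}{\alpha}\,\|g\|_{C^0}\qquad\text{for all }t\geq 0.
\]
Since $W$ is bounded on $N\times\s^1$ by compactness of $\overline{N}$, letting $t\to\infty$ yields $W(y_0)\leq(c_3/\alpha)\,\|g\|_{C^0}$, and hence $d(y_0,\G\times\s^1)\leq c_1^{-1/2}\,W(y_0)\leq C\,\|g\|_{C^0}$ with $C:=c_3/(\alpha\sqrt{c_1})$.

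The main obstacle will be producing $V$ satisfying \emph{both} the decay $\nabla V\cdot f\leq-\alpha V$ and the matched gradient bound $|\nabla V|\leq c_3\sqrt V$: the naive choice $V=\rho^2$ gives $\dot V=2a(s)\rho^2$, which is not sign-definite when $a(s)$ is only negative on average, and the Floquet weight $e^{-2A(s)}$ is precisely what converts average contraction into pointwise contraction along $\G$. A minor subtlety is that $\overline{M}$ need not be closed in $N\times\s^1$; taking $\overline{N}$ inside the domain of the Lyapunov estimates nevertheless ensures that $W$ along backward trajectories in $\overline{M}$ is a priori bounded, uniformly in $g$ and in $\overline{M}$.
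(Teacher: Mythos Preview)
The paper does not prove this lemma at all: it simply cites \cite{MR1740943} (Chicone--Liu), where the result appears as Proposition~6.2, and remarks that the proof there applies verbatim. So there is no in-paper argument to compare against; your proposal is supplying a self-contained proof where the authors chose to outsource.

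Your Lyapunov approach is sound and standard. The Floquet weight $e^{-2A(s)}$ is exactly the right device to turn average normal contraction into pointwise decay of $V$, and the chain of inequalities $\dot V\le-\alpha V+|\nabla V|\,\|g\|_{C^0}\le-\alpha V+c_3\sqrt V\,\|g\|_{C^0}$, followed by the substitution $W=\sqrt V$ and backward integration along a full orbit in $\overline M$, gives the claimed bound with $C$ independent of $g$ and $T$. Two small points worth tightening in a final write-up: first, the gradient bound $|\nabla V|\le c_3\sqrt V$ should be stated in the ambient $x$-variables (not $(s,\rho)$), so record that the tubular chart has bounded Jacobian on $\overline N$; second, the passage through zeros of $W$ is cleanest handled by a case split (if $W$ vanishes somewhere on the backward orbit, integrate from the last zero; otherwise let $t\to\infty$), rather than by the parenthetical remark. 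Neither affects the validity of the argument.
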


The proof of Lemma \ref{lemmaapriori} may be found in \cite{MR1740943}, wherein it is called Proposition 6.2. We remark that, even though it is stated slightly differently in that reference, the proof set forth in it is directly applicable to our case.

Now, assuming that Proposition \ref{prop:main} holds, we provide the following result that will be fundamental in establishing the convergence of the invariant tori as $\varepsilon$ goes to $0$.
\begin{proposition} \label{propositionconvergence}
	Suppose system (\ref{systempolartruncated}) has an attracting hyperbolic limit cycle $\Gamma=\{(0,\sigma):\sigma\in\s^1\}$. Then, there exist $\mu_L>0$ and a constant $K>0$ such that the inequality
\[
		|h^\varepsilon|_{C^0} < K \varepsilon
\]
	holds for all $\mu \in (0,\mu_L]$ and all $\varepsilon \in [0,\mu]$
\end{proposition}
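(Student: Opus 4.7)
The plan is to view the system $E^{\varepsilon,\mu}$ as a $\tau$-dependent perturbation of the planar system \eqref{systempolartruncated} of size $\mathcal{O}(\varepsilon)$, where the ``time'' variable $\tau$ lives on the circle $\mathbb{S}^1_{T\mu^\ell}:=\mathbb{R}/(T\mu^\ell\mathbb{Z})$, and to obtain the $C^0$-bound on $h^\varepsilon$ directly from Lemma \ref{lemmaapriori}. Since the hypothesis on \eqref{systempolartruncated} is precisely the one required by Lemma \ref{lemmaapriori}, that lemma furnishes a neighborhood $N$ of $\Gamma$ in the $(\rho,\sigma)$-cylinder and a constant $C>0$ such that any invariant set of $E^{\varepsilon,\mu}$ contained in $N\times\mathbb{S}^1_{T\mu^\ell}$ is within $C\|(\varepsilon R,\varepsilon S)\|_{C^0}$ of $\Gamma\times\mathbb{S}^1_{T\mu^\ell}$. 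Because $R(\rho,\sigma,\cdot,\mu)$ and $S(\rho,\sigma,\cdot,\mu)$ are $T$-periodic, rescaling the third argument to $\tau/\mu^\ell$ leaves their $C^0$ norms unchanged, and since $R,S$ are continuous on the compact set $\overline{N}\times\mathbb{S}^1_T\times[0,\mu_s]$, there is a single constant $M_0$ bounding $|R|+|S|$ uniformly in $\mu\in[0,\mu_s]$. Hence $\|(\varepsilon R,\varepsilon S)\|_{C^0}\leq M_0\varepsilon$ on $\overline{N}\times\mathbb{S}^1_{T\mu^\ell}$ independently of $\mu$.

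In order actually to invoke Lemma \ref{lemmaapriori} for the tori $M(\varepsilon,\mu)$, I need to guarantee that they stay inside the tube $N\times\mathbb{S}^1_{T\mu^\ell}$, and for this I will run a connectedness argument. Fix a tubular subneighborhood $N_0$ of $\Gamma$ of radius $\delta>0$ with $\overline{N_0}\subset N$, and then choose $\mu_L\in(0,\mu_s]$ so small that $CM_0\mu_L<\delta/2$. For each $\mu\in(0,\mu_L]$ define
\[
J_\mu=\{\varepsilon\in[0,\mu]:\,M(\varepsilon,\mu)\subset N_0\times\mathbb{S}^1_{T\mu^\ell}\}.
\]
Since $M(0,\mu)=\Gamma\times\mathbb{S}^1_{T\mu^\ell}$, we have $0\in J_\mu$; the $C^r$-persistence part of Fenichel's theorem, used throughout the method of continuation and Proposition \ref{prop:main}, makes $h^\varepsilon$ depend $C^0$-continuously on $\varepsilon$ at fixed $\mu$, so $J_\mu$ is open in $[0,\mu]$. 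Suppose $\varepsilon^{*}=\sup J_\mu<\mu$; by continuity $M(\varepsilon^{*},\mu)\subset\overline{N_0}\times\mathbb{S}^1_{T\mu^\ell}\subset N\times\mathbb{S}^1_{T\mu^\ell}$, so Lemma \ref{lemmaapriori} applies and yields
\[
\sup\{d(y,\Gamma\times\mathbb{S}^1_{T\mu^\ell}):y\in M(\varepsilon^{*},\mu)\}\leq CM_0\varepsilon^{*}\leq CM_0\mu_L<\delta/2,
\]
forcing $M(\varepsilon^{*},\mu)$ into the interior of $N_0\times\mathbb{S}^1_{T\mu^\ell}$ and, by continuity, the same holds for $\varepsilon$ slightly larger than $\varepsilon^{*}$, contradicting the definition of $\varepsilon^{*}$. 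Hence $J_\mu=[0,\mu]$.

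With the inclusion $M(\varepsilon,\mu)\subset N\times\mathbb{S}^1_{T\mu^\ell}$ established for every $\varepsilon\in[0,\mu]$ and $\mu\in(0,\mu_L]$, Lemma \ref{lemmaapriori} applies to the entire family and, using that $M(\varepsilon,\mu)$ is the graph $\rho=h^\varepsilon(\sigma,\tau)$ so that $d(M(\varepsilon,\mu),\Gamma\times\mathbb{S}^1_{T\mu^\ell})=|h^\varepsilon|_{C^0}$, delivers $|h^\varepsilon|_{C^0}\leq CM_0\varepsilon$. Taking $K=CM_0+1$ gives the desired strict inequality for $\varepsilon>0$, the case $\varepsilon=0$ being trivial since $h^0\equiv 0$. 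The main obstacle in this plan is the continuation step forcing the torus to remain in the tube $N$: the a priori bound from Lemma \ref{lemmaapriori} is only useful once one knows the torus is inside $N$, so the argument is genuinely self-referential and must be closed by connectedness. This is only possible because the neighborhood $N$ and constant $C$ supplied by Lemma \ref{lemmaapriori} depend solely on the unperturbed system \eqref{systempolartruncated} and not on the period $T\mu^\ell$ of the $\tau$-variable, which would otherwise blow up as $\mu\to 0$.
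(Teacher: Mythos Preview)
Your proof is correct and follows essentially the same approach as the paper: both invoke Lemma \ref{lemmaapriori} to obtain the neighborhood $N$ and constant $C$, bound the perturbation $(\varepsilon R,\varepsilon S)$ uniformly in $\mu$ by periodicity and continuity, and then run a connectedness argument on $[0,\mu]$ (assuming $A^\mu=[0,\mu]$ via Proposition \ref{prop:main}) to trap the tori inside $N\times\mathbb{S}^1_{T\mu^\ell}$ so that the lemma applies. The only cosmetic difference is that the paper works directly with the condition $|h^\varepsilon|_{C^0}<r_0:=\mathrm{dist}(\partial N,\Gamma)$, whereas you introduce an intermediate tube $N_0$ of radius $\delta$ with $\overline{N_0}\subset N$; the bootstrapping step and the choice of $\mu_L$ so that $CM_0\mu_L<\delta/2$ (the paper's $\mu_1=\tfrac12\min\{\mu_s,r_0/(CK_s)\}$) are otherwise identical.
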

\begin{proof}
 Let $\mu_s>0$ be as in Proposition \ref{prop:main}.  Since system  (\ref{systempolartruncated}) has a attracting hyperbolic limit cycle $\Gamma$, we can apply Lemma \ref{lemmaapriori} and obtain $N \subset \mathbb{R}^2$ and $C>0$ satisfying the conditions put forward in it. Moreover, if $R$ and $S$ are defined as in (\ref{eq:family}), we set
\[
		K_s := \sup \{|R(\rho,\sigma,\tau/\mu^\ell,\mu)|+|S(\rho,\sigma,\tau/\mu^\ell,\mu)|: (\rho,\sigma,\tau) \in N \times \mathbb{R}, \; \mu \in (0,\mu_s] \}.
\]
 Notice that $K_s<\infty$. Indeed, the functions $R$ and $S$ are continuous and periodic in the third variable.

	Let $r_0$ be defined by
\[
		r_0:= \inf \{d(y,\Gamma): y \in \mathbb{R}^2\setminus N \}>0.
\]
	Define
\[
		\mu_1:= \frac{1}{2} \min \left\{\mu_s, \frac{r_0}{C \, K_s}\right\}.
\]
	Fix $\mu \in (0,\mu_1]$. Since $\mu \in (0,\mu_s]$, Proposition \ref{prop:main} ensures that $[0,\mu]=A^\mu$. Define $I^\mu$ as the maximal interval contained in $A^\mu=[0,\mu]$ with left endpoint at 0 such that, for $\varepsilon \in I^\mu$, the following holds: $h^\varepsilon$ satisfies $|h^\varepsilon|_{C^0}<r_0$. The result will be proved if we show that $I^\mu=[0,\mu]$, because, in this case, Lemma \ref{lemmaapriori} implies that $|h^{\e}|_{C^0}<K \e$, with $K=C K_s$. This will be done by showing that $I^\mu$ is non-empty subset of the interval $[0,\mu]$ that is also relatively open and closed.
	
	It is clear that $I^\mu$ is non-empty, because $0 \in I^\mu$.  Furthermore, since $I^\mu \subset A^\mu$, we can apply Fenichel's Theorem to any $M(\varepsilon,\mu)$ such that $\varepsilon \in I^\mu$ and guarantee that $I^\mu$ is relatively open in $A^\mu$. 
	
	All that remains is showing that $I^\mu$ is also relatively closed in $A^\mu$, so that connectedness implies that $I^\mu=A^\mu=[0,\mu]$. In order to do so, let $\alpha_{\mu}$ be the least upper bound of $I^\mu$. By definition of $I^\mu$, we must have $[0,\alpha_{\mu}) \subset I^\mu \subset [0,\mu] = A^\mu$. Thus, by taking closures, it follows that the stronger inclusion $[0,\alpha_{\mu}] \subset A^\mu$ holds. Thus, Fenichel's theorem ensures that $h^\varepsilon$ is continuous with respect to its parameter $\varepsilon$ for $0\leq \varepsilon \leq \alpha_{\mu}$.
	
	Observe that, by definition of $r_0$, the following holds: if $|h^\varepsilon|_{C^0}<r_0$, then $M(\varepsilon,\mu) \subset N \times \mathbb{S}^1$. Since $[0,\alpha_{\mu}) \subset I^\mu$, Lemma \ref{lemmaapriori} guarantees that $|h^\varepsilon|_{C^0}<C K_s \varepsilon$ for all $\varepsilon \in [0,\alpha_{\mu})$. Thus, we have
\[
		|h^\varepsilon|_{C^0} < C K_s \varepsilon < C K_s \mu< C K_s \mu_1 < \frac{r_0}{2},
\]
	for all $\varepsilon \in [0,\alpha_{\mu})$. From the continuity of the function $\e\mapsto h^\varepsilon$, for $\varepsilon \in [0,\alpha_{\mu}]$, by taking the limit as $\varepsilon \to \alpha_{\mu}$, it follows  that
	\begin{align*}
		|h^{\alpha_{\mu}}|_{C^0}\ \leq \frac{r_0}{2} < r_0,
	\end{align*}
	so that $\alpha_{\mu} \in I^\mu$. This proves that $I^\mu$ must also be relatively closed in $[0,\mu]$. Therefore, by connectedness of $[0,\mu]$, it follows that $I^\mu=[0,\mu]$, finishing the proof.
\end{proof}
\subsection{Proof of Theorems \ref{theoremA} and \ref{theoremB}}\label{sec:proofB}

Consider the differential system \eqref{systempolar} written in the extended phase space:
\begin{equation} \label{autonomized system}
	\begin{aligned}
		\rho' =& f(\rho,\sigma) + \varepsilon R(\rho,\sigma,\tau/\varepsilon^\ell,\varepsilon),\\
		\sigma' = &g(\rho,\sigma) + \varepsilon S(\rho,\sigma, \tau/\varepsilon^\ell, \varepsilon), \\
		\tau'=&1,
	\end{aligned}
\end{equation}
where $\tau$ is an angular variable modulo $T \varepsilon^\ell$.

Notice that, by taking $\varepsilon=\mu$, the differential equation \eqref{autonomized system} corresponds to the member $E^{\varepsilon,\varepsilon}=E^{\mu,\mu}$ of the family \eqref{eq:family}. Assuming Proposition \ref{prop:main} is proven, it follows from the Method of Continuation introduced before that $A^\mu = [0,\mu]$. In particular, $\mu\in A^{\mu}$. Thus, the part of Theorem \ref{theoremB} regarding the existence of the $r$-normally hyperbolic invariant tori follows by taking $\varepsilon=\mu$.

The existence of the $T$-periodic solution $\Gamma_{int}$ inside the torus $M(\mu,\mu)$ follows directly from observing that, since $\tau'=1$, we can define a stroboscopic Poincaré map on a transversal section inside the torus that extends to its border, including this border. This map is continuous and its codomain can be chosen to be equal to its domain, a set that is homeomorphic to the closed disk. Thus, by the Brouwer fixed-point theorem, the Poincaré map thus defined must have a fixed point, which corresponds to the $T$-periodic solution $\Gamma_{int}$.

The convergence of the invariant manifold as $\varepsilon$ goes to $0$ in (\ref{autonomized system}) follows from Proposition \ref{propositionconvergence}. In fact, for $\mu$ sufficiently small, we can take $\varepsilon=\mu$  in Proposition \ref{propositionconvergence} and conclude that there is $K>0$, independent of the choice of $\mu$, such that $|h^\mu|_{C^0} < K \mu$. It follows that the invariant manifold $M(\mu,\mu)$ of system $E^{\mu,\mu}$ converges to $\Gamma \times \mathbb{S}^1$ as $\mu \to 0$.

Finally, Theorem \ref{theoremA} follows by noticing that the guiding system \eqref{guiding} is equivalent to the differential system \eqref{systempolartruncated}, and that the differential equation \eqref{eq:e1} is equivalent to the differential system \eqref{autonomized system}.

\subsection{Useful Estimates on $h^\varepsilon$}\label{sec:prel}

We remind that, for $\mu>0$ fixed, $h^{\varepsilon}$ denotes a $C^r$ function of the angular variables whose graph is a $r$-normally hyperbolic invariant manifold of system $E^{\varepsilon,\mu}$. Moreover, we define $\gamma^\varepsilon(s,q):=(h^\varepsilon(\sigma^\varepsilon(s,q),\tau(s)),\sigma^\varepsilon(s,q),\tau(s))$ to be the solution of system $E^{\varepsilon,\mu}$ starting at $(h^\varepsilon(q,0),q,0)$. As indicated before, $\bar{\varepsilon}_{\mu}$ denotes the least upper bound of $A^\mu$ for a given $\mu>0$.

Estimates on the size of the function $h^\varepsilon$, its derivatives and other useful quantities will be necessary. 
 More specifically, we shall prove that there are $\mu_0>0$ and $C_0>0$ such that:
 \begin{itemize}
 \item[(a)] $|h^\varepsilon|_{C^1}<C_0\varepsilon$ for all $\mu \in(0,\mu_0]$ and all $\varepsilon \in [0,\bar{\varepsilon}_\mu)$;
\item[(b)] for each $\mu \in (0,\mu_0]$, the family $\{h^\varepsilon: \varepsilon\in [0,\bar{\varepsilon}_\mu) \}$ is uniformly bounded in the $C_2$-norm.
\end{itemize}
Statement (a) will follow as a consequence of Lemma \ref{lemmaestimatesh} and Statement (b) will follow as a consequence of Lemmas \ref{lemmahsigmasigma} and \ref{lemmahsigmatauhtautau}.

Before presenting the proofs of those estimates, we outline the meaning of the terms used therein. Considering system $E^{\varepsilon,\mu}$, we define
\begin{align} \label{definition:G}
	F(\rho,\sigma,\tau,\mu,\varepsilon):=f(\rho,\sigma) + \varepsilon R (\rho,\sigma,\tau/\mu^\ell,\mu), \nonumber \\ 
	G(\rho,\sigma,\tau,\mu,\varepsilon):=g(\rho,\sigma) + \varepsilon S (\rho,\sigma,\tau/\mu^\ell,\mu).
\end{align} 
We also define
\[
	y^\varepsilon(s,q) := \exp \left(\int_0^s (G_\rho h^\varepsilon_\sigma + G_\sigma) \, dt \right) ,
\]
where the argument of the derivatives of $G$ is  $(h^\varepsilon(\sigma^\varepsilon(t,q),\tau(t)),\sigma^\varepsilon(t,q),\tau(t),\mu,\varepsilon)$ and the argument of the derivative of $h^\varepsilon$ is given by $(\sigma^\varepsilon(t,q),\tau(t))$. That integral arises naturally when solving the first variational equation associated to $E^{\varepsilon,\mu}$.

As indicated earlier, the first part of the estimates we will need later on is provided by Lemmas \ref{lemmareduction} and \ref{lemmaestimatesh} below, whose proofs are strongly inspired by the proofs of their counterparts in \cite{MR1740943}. We remark however that, because of the intended application we have for those results, we ensure the independence of the constants appearing in our proofs with respect to the particular choice of $\mu>0$.

\begin{lemma}\label{lemmareduction}
	Suppose there are $\mu_0>0$ and $C_0>0$ such that the following estimates hold
	\begin{align*}
		|h^{\varepsilon}(\sigma,\tau)| < C_0 \varepsilon, \qquad |h_\sigma^\varepsilon(\sigma,0) | < C_0 \varepsilon,
	\end{align*}
	for all $\mu \in (0,\mu_0]$, all $\varepsilon \in [0,\bar{\varepsilon}_{\mu})$ and all $(\sigma,\tau) \in [0,2\pi] \times [0,T \,\mu^\ell]$.
	Then, there are  $\widetilde{\mu} \in (0,\mu_0]$ and $\widetilde{C}>0$, independent of $\mu$, such that the estimate 
	\begin{align*}
		|h_\sigma^\varepsilon |_{C^0} < \widetilde{C} \varepsilon
	\end{align*}
	holds for all $\mu \in (0,\widetilde{\mu}]$ and all $\varepsilon \in [0,\bar{\varepsilon}_{\mu})$.
\end{lemma}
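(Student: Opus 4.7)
The plan is to derive a scalar evolution equation for $h_\sigma^\varepsilon$ along solutions of $E^{\varepsilon,\mu}$, show that it is a Riccati-type ODE with $O(\varepsilon)$ forcing, and then propagate the hypothesized bound at $\tau=0$ to every slice by Gronwall's inequality over the short time interval $[0,T\mu^\ell]$ needed to sweep the entire $\tau$-period. The crucial fact that makes this approach uniform in $\mu$ is precisely that this propagation time shrinks to $0$ with $\mu$.

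First I would differentiate the invariance identity
\[
F(h^\varepsilon,\sigma,\tau,\mu,\varepsilon)\;=\;h_\sigma^\varepsilon\,G(h^\varepsilon,\sigma,\tau,\mu,\varepsilon)\;+\;h_\tau^\varepsilon
\]
with respect to $\sigma$. Using $\partial_\sigma[G(h^\varepsilon,\sigma,\tau,\mu,\varepsilon)]=G_\rho h_\sigma^\varepsilon+G_\sigma$ and observing that along $\gamma^\varepsilon(s,q)$ one has $\tfrac{d}{ds}h_\sigma^\varepsilon(\sigma^\varepsilon(s,q),\tau(s))=h_{\sigma\sigma}^\varepsilon G+h_{\sigma\tau}^\varepsilon$, the function $\psi(s):=h_\sigma^\varepsilon(\sigma^\varepsilon(s,q),\tau(s))$ is seen to satisfy
\[
\psi'(s)\;=\;\bigl(F_\rho-G_\sigma\bigr)\psi\;-\;G_\rho\,\psi^{2}\;+\;F_\sigma,
\]
with all derivatives of $F,G$ evaluated along $\gamma^\varepsilon(s,q)$. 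The coefficients $F_\rho$, $G_\sigma$, $G_\rho$ are bounded uniformly in $(\mu,\varepsilon)$ because $R$ and $S$ are periodic in their third argument and $C^r$ in the others on a compact set. The inhomogeneity $F_\sigma$ restricted to the torus is $O(\varepsilon)$ with a $\mu$-independent constant: from $f=A(\sigma)\rho+f_2(\rho,\sigma)$ with $f_2(0,\sigma)\equiv 0$ and $\partial_\rho f_2(0,\sigma)\equiv 0$, a Taylor expansion yields $f_\sigma(\rho,\sigma)=O(\rho)$, so the hypothesis $|h^\varepsilon|<C_0\varepsilon$ gives $|f_\sigma(h^\varepsilon,\sigma)|=O(\varepsilon)$, while $\varepsilon R_\sigma$ is manifestly $O(\varepsilon)$.

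For any target point $(\sigma_0,\tau_0)$ with $\tau_0\in[0,T\mu^\ell]$, I would pick $q$ such that $\sigma^\varepsilon(\tau_0,q)=\sigma_0$, which is possible for $\mu_0$ small enough since $\dot\sigma$ is close to $1$. The hypothesis then supplies the initial bound $|\psi(0)|=|h_\sigma^\varepsilon(q,0)|<C_0\varepsilon$. A standard bootstrap (assuming $|\psi|\le M$ so that $|G_\rho\psi^2|\le KM|\psi|$) combined with Gronwall yields
\[
|\psi(s)|\;\le\;C_0\varepsilon\,e^{Ks}\;+\;K'\varepsilon\bigl(e^{Ks}-1\bigr),\qquad 0\le s\le T\mu^\ell.
\]
Choosing $\widetilde{\mu}\in(0,\mu_0]$ so small that $e^{KT\widetilde{\mu}^\ell}\le 2$ gives $|\psi(s)|\le\widetilde{C}\varepsilon$ with $\widetilde{C}$ independent of $\mu$, and for $\varepsilon$ small this is consistent with $|\psi|\le M$, closing the bootstrap. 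Since every point of the torus is reached in this way in time $\tau_0\le T\mu^\ell$, the $C^0$ conclusion follows.

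The main obstacle is keeping the constants uniform in $\mu$ while absorbing the quadratic term $G_\rho\psi^2$. The saving feature is that the natural propagation time equals the period $T\mu^\ell$ of $\tau$, which vanishes with $\mu$, so the Gronwall exponential remains close to $1$; notably, the hyperbolicity of $\Gamma$ plays no explicit role at this stage (it was already used upstream to produce $h^\varepsilon$), and the entire estimate rests on the smallness of $F_\sigma$ along the manifold, which comes from the standard form of $f$ together with the assumed bound on $|h^\varepsilon|$.
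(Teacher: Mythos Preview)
Your approach is correct and genuinely different from the paper's. The paper works with the full principal fundamental matrix $\Psi^\varepsilon(s,q)$ of the variational equation along $\gamma^\varepsilon$, observes that $v_2^\varepsilon(s,q)=y^\varepsilon(s,q)\,(h_\sigma^\varepsilon,1,0)^T$ solves it, and then compares $\Psi^\varepsilon v_2^\varepsilon(0,q^\varepsilon)$ to $\Psi^0 v_2^0(0,q^0)$ via a Gronwall-type lemma on the matrix level; since the first component of $v_2^0$ vanishes identically, the first component of the difference is $y^\varepsilon h_\sigma^\varepsilon(p,\tau)+O(\varepsilon)$, and dividing by a lower bound for $y^\varepsilon$ yields the claim. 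Your route bypasses the matrix machinery entirely: you derive the scalar Riccati equation $\psi'=(F_\rho-G_\sigma)\psi-G_\rho\psi^2+F_\sigma$ for $\psi=h_\sigma^\varepsilon$ along orbits and exploit directly that $f_\sigma(0,\sigma)=0$ (a consequence of $f=A(\sigma)\rho+f_2$ with $f_2(0,\cdot)\equiv0$) to get $F_\sigma=O(\varepsilon)$ on the torus, then propagate by scalar Gronwall over the short interval $[0,T\mu^\ell]$.

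What each buys: your argument is more elementary and self-contained, needing only standard Gronwall and a continuity bootstrap to absorb the quadratic term (fix $M=1$, determine $K$ and $\widetilde C$, then shrink $\widetilde\mu$ so that both $e^{KT\widetilde\mu^\ell}\le 2$ and $\widetilde C\widetilde\mu<1$; the order of choices is consistent). The paper's variational-matrix framework, while heavier here, is reused verbatim in the subsequent estimates for $h_{\sigma\sigma}^\varepsilon$ and in the normal-hyperbolicity verification, which explains the choice. Both proofs rest on the same two facts: the propagation time $T\mu^\ell$ shrinks with $\mu$, and the relevant inhomogeneity is $O(\varepsilon)$ because the unperturbed manifold is $\{\rho=0\}$.
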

\begin{proof}
	Let $\mu_0$ and $C_0$ be as stated above. Also, we will denote the flow of system $E^{\varepsilon,\mu}$ by $\phi^\varepsilon(t,x)$, where $\phi^\varepsilon(0,x)=x$. For each $\mu \in (0,\mu_0]$, $\varepsilon \in [0,\bar{\varepsilon}_{\mu})$, $p \in [0,2\pi]$, and $\tau \in \mathbb{R}$, we may define a unique angle $q^\varepsilon$ by
	\begin{align*}
		(h^\varepsilon(q^\varepsilon,0),q^\varepsilon,0) = \phi^\varepsilon(-\tau,(h^\varepsilon(p,\tau),p,\tau)).
	\end{align*}
	Let $\mu_L>0$ and $K>0$ be as in Proposition \ref{propositionconvergence}. Define $\mu_1:=\min \{\mu_0,\mu_L\}>0$. An application of Lemma \ref{gronwalllemma} with $t_*=-T \,\mu_1^\ell$ and $Q=\{(h^\varepsilon(r,s),r,s,\varepsilon,\mu) \in \mathbb{R} \times [0,2\pi] \times \mathbb{R} \times \mathbb{R} \times (0,\mu_1]: s \in [0,T \,\mu^\ell], \; \varepsilon \in [0,\bar{\varepsilon}_{\mu}) \}$ ensures that there is $K_1>0$, independent of $\mu$ and $\varepsilon$, such that
	\begin{align*}
		|h^\varepsilon(q^\varepsilon,0)| + |q^\varepsilon-q^0| < K_1 (|h^\varepsilon(p,\tau)|+ \varepsilon),
	\end{align*}
	for all $\mu \in (0,\mu_1]$, $\varepsilon \in [0,\bar{\varepsilon}_{\mu})$, $p \in [0,2\pi]$, and all $\tau \in [0,T\mu^\ell]$.
	By hypothesis, it follows that,
\[
		|h^\varepsilon(q^\varepsilon,0)| + |q^\varepsilon-q^0| < K_1 (C_0+1)\varepsilon,
\]
	for all $\mu \in (0,\mu_1]$, $\varepsilon \in [0,\bar{\varepsilon}_{\mu})$, $p \in [0,2\pi]$, and all $\tau \in [0,T\mu^\ell]$.
	In particular, by defining the constant $K_2:=K_1(C_0+1)$, we have that
\[
		|q^\varepsilon - q^0| < K_2 \varepsilon.
\]
	
	A solution of the first variational equation of $E^{\varepsilon,\mu}$ along $\gamma^\varepsilon(t,q^\varepsilon)$ is given by
	\begin{align} \label{def:v2epsilon}
		v_2^\varepsilon(s,q^\varepsilon) := y^\varepsilon(s,q^\varepsilon) 
		\begin{pmatrix}
			h_\sigma^\varepsilon(\sigma^\varepsilon(s,q^\varepsilon),\tau(s)) \\
			1 \\
			0
		\end{pmatrix}.
	\end{align}
	In fact, $v_2^\varepsilon(s,q^\varepsilon) = \gamma_q^\varepsilon(s,q^\varepsilon)$, because $y^\varepsilon(s,q^\varepsilon)$ satisfies the same initial value problem as $\sigma_q^\varepsilon(s,q^\varepsilon)$. Let $\Psi^\varepsilon(s,q^\varepsilon)$ be the principal fundamental matrix solution of $E^{\varepsilon,\mu}$ at $t=0$ along $\gamma^\varepsilon(s,q^\varepsilon)$. Then, it follows that
	\begin{align*}
		v_2^\varepsilon(s,q^\varepsilon) = \Psi^\varepsilon(s,q^\varepsilon) \cdot v_2^\varepsilon(0,q^\varepsilon) =  \Psi^\varepsilon(s,q^\varepsilon) \cdot 
		\begin{pmatrix}
			h_\sigma^\varepsilon(q^\varepsilon,0) \\
			1 \\
			0
		\end{pmatrix}.
	\end{align*}
	Furthermore, by definition of $q^\varepsilon$, we have 
	\begin{align*}
		v_2(\tau,q^\varepsilon) = y^\varepsilon(\tau, q^\varepsilon) 
		\begin{pmatrix}
			h_\sigma^\varepsilon(p,\tau) \\
			1 \\
			0
		\end{pmatrix}.
	\end{align*}
	We remark that, by the triangle inequality,
\[
\begin{aligned}
|\Psi^\varepsilon(s,q^\varepsilon) \cdot v_2^\varepsilon(0,q^\varepsilon) - \Psi^0(s,q^0) \cdot v_2^0(0,q^0)| \leq |\Psi^\varepsilon(s,q^\varepsilon) -& \Psi^0(s,q^0)| |v_2^\varepsilon(0,q^\varepsilon)| \nonumber \\  &+ |\Psi^0(s,q^0)| |v_2^\varepsilon(0,q^\varepsilon) - v_2^0(0,q^0)|.
	\end{aligned}
\]
	Lemma \ref{gronwalllemma} applied with $t_*=T \mu_1^\ell$ and $Q=\{(h^\varepsilon(r,0),r,0,\varepsilon,\mu) \in \mathbb{R} \times [0,2\pi] \times \mathbb{R} \times \mathbb{R} \times (0,\mu_1]:  \varepsilon \in [0,\bar{\varepsilon}_{\mu})\}$ ensures that there is $K_3>0$ such that 
	\begin{align*}
		|\Psi^\varepsilon(s,q^\varepsilon) - \Psi^0(s,q^0)| < K_3 (|h^\varepsilon(q^\varepsilon,0)| + |q^\varepsilon-q^0|  + \varepsilon) < K_3 (K_2+1) \varepsilon,
	\end{align*}
	for all $\mu \in (0,\mu_1]$, $\varepsilon \in [0,\bar{\varepsilon}_{\mu})$, $p \in [0,2\pi]$, and all $s \in [0,T \mu^\ell_1]$. In particular, there is $K_4>0$ such that
	\begin{align} \label{ineq:psidif}
		|\Psi^\varepsilon(\tau,q^\varepsilon) - \Psi^0(\tau,q^0)| < K_4 \varepsilon,
	\end{align}
	for all $\mu \in (0,\mu_1]$, $\varepsilon \in [0,\bar{\varepsilon}_{\mu})$, $p \in [0,2\pi]$, and $\tau \in [0, T \mu^\ell]$. Furthermore, by hypothesis, there is $K_5>0$ such that
\[
		|h^\varepsilon_\sigma(\xi,0)|<K_5,
\]
	for all $\mu \in (0,\mu_1]$, $\varepsilon \in [0,\bar{\varepsilon}_{\mu})$, and all $\xi \in [0,2\pi]$. Hence, we have that
	\begin{align} \label{ineq:v2}
		|v_2^\varepsilon(0,q^\varepsilon)| < K_5+1,
	\end{align}
	for all $\mu \in (0,\mu_1]$, $\varepsilon \in [0,\bar{\varepsilon}_{\mu})$, $p \in [0,2\pi]$, and $\tau \in [0, T \mu^\ell]$. By continuity of $\Psi^0$, there is $K_6>0$ such that
	\begin{align}\label{ineq:psi0}
		|\Psi^0(s,\xi)| < K_6,
	\end{align}
	for all $s \in [0,T\mu_1^\ell]$ and all $\xi \in [0,2\pi]$. 
	
	Henceforth in this proof, all inequalities are to be read as valid for all $\mu \in (0,\mu_1]$, $\varepsilon \in [0,\bar{\varepsilon}_{\mu})$, $p \in [0,2\pi]$, and $\tau \in [0, T \mu^\ell]$, except when stated otherwise, and we shall omit their domain of validity for conciseness. Considering the hypotheses, it follows that
	\begin{align} \label{ineq:v2dif}
		|v_2^\varepsilon(0,q^\varepsilon) - v_2^0(0,q^0)| = |h^\varepsilon_\sigma(q^\varepsilon,0)| < C_0
		\varepsilon. 
	\end{align}
	Combining (\ref{ineq:psidif}), (\ref{ineq:v2}), (\ref{ineq:psi0}), and (\ref{ineq:v2dif}), we conclude that there is $K_7>0$ such that
\[
		|\Psi^\varepsilon(\tau,q^\varepsilon) \cdot v_2^\varepsilon(0,q^\varepsilon) - \Psi^0(\tau,q^0) \cdot v_2^0(0,q^0)| \leq K_7 \varepsilon.
\]
	Thus, it follows that
\[
		|y^\varepsilon(\tau,q^\varepsilon) h_\sigma^\varepsilon(p,\tau) | + |y^\varepsilon(\tau,q^\varepsilon) - y^0(\tau,q^0)| < K_7\varepsilon.
\]
	In particular,
\[
		|y^\varepsilon(\tau,q^\varepsilon)|| h_\sigma^\varepsilon(p,\tau)| \leq K_7 \varepsilon.
\]
	Furthermore, since $0<\bar{\varepsilon}_{\mu} \leq \mu$, another application of the reverse triangle inequality provides
\[
		|y^\varepsilon(\tau,q^\varepsilon)| \geq |y^0(\tau,q^0)| - |y^\varepsilon(\tau,q^\varepsilon)-y^0(\tau,q^0)| \geq |y^0(\tau,q^0)| - K_7 \mu.
\]
	Let $m>0$ denote the minimum value of $y^0$ in $[0,T\mu_1^\ell] \times [0,2\pi]$. Finally, define $\mu_2=\min\{\mu_1, \frac{m}{2K_8}\}$. Then, we conclude that 
\[
		| h_\sigma^\varepsilon(p,\tau)| \leq \frac{2K_{7}}{m} \varepsilon,
\]
	for all $\mu \in (0,\mu_2]$, $\varepsilon \in [0,\bar{\varepsilon}_{\mu})$, $p \in [0,2\pi]$, and $\tau \in [0, T \mu^\ell]$, as we wished to prove.
\end{proof}

\begin{lemma} \label{lemmaestimatesh}
	Suppose system (\ref{systempolartruncated}) has an attracting hyperbolic limit cycle $\Gamma=\{(0,\sigma):\sigma\in\s^1\}$. Then, there are $\widetilde{\mu}>0$ and $\widetilde{C}>0$ such that
	\begin{align*}
		|h^\varepsilon|_{C^0} < \widetilde{C} \varepsilon, \quad |h_\sigma^\varepsilon|_{C^0} < \widetilde{C} \varepsilon, \quad |h_\tau^\varepsilon|_{C^0}<\widetilde{C} \varepsilon,
	\end{align*}
	for all $\varepsilon \in [0,\bar{\varepsilon}_{\mu})$ and all $\mu \in (0,\widetilde{\mu}]$.
\end{lemma}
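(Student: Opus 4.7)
The strategy is to establish the three bounds in sequence, relying on Lemma \ref{lemmaapriori}, Lemma \ref{lemmareduction}, and the invariance identity of the torus. For $|h^\varepsilon|_{C^0}<\widetilde{C}\varepsilon$, I would apply Lemma \ref{lemmaapriori} to the planar system \eqref{systempolartruncated} with perturbation $\varepsilon(R,S)$. The lemma yields a neighborhood $N$ of $\Gamma$ and a constant $C>0$ such that any invariant set contained in $N\times\mathbb{S}^1$ has distance at most $CK_s\varepsilon$ from $\Gamma\times\mathbb{S}^1$, where $K_s$ is a uniform upper bound for $|R|+|S|$ on $N\times\mathbb{R}\times(0,\mu_0]$ (finite by continuity and periodicity in the fast variable). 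To ensure that the torus $M(\varepsilon,\mu)$ is actually contained in $N\times\mathbb{S}^1$, I would run a continuation argument inside $[0,\bar{\varepsilon}_\mu)$ parallel to, but independent of, Proposition \ref{propositionconvergence}: let $J^\mu$ be the maximal subinterval of $[0,\bar{\varepsilon}_\mu)$ starting at $0$ on which $|h^\varepsilon|_{C^0}<r_0:=\inf_{y\notin N}d(y,\Gamma)$, establish openness via Fenichel's theorem and closedness via the a priori bound above, and choose $\widetilde{\mu}$ with $CK_s\widetilde{\mu}<r_0/2$. This forces $J^\mu=[0,\bar{\varepsilon}_\mu)$ and gives $|h^\varepsilon|_{C^0}<CK_s\varepsilon$ throughout.

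To obtain the $h^\varepsilon_\sigma$ bound, I would invoke Lemma \ref{lemmareduction}, whose first hypothesis is precisely the $C^0$ estimate just proved. The remaining hypothesis, $|h^\varepsilon_\sigma(\cdot,0)|_{C^0}<C\varepsilon$, I would deduce by considering the stroboscopic Poincar\'e map $P^\varepsilon$ of $E^{\varepsilon,\mu}$ on the section $\{\tau=0\}$: the slice $\{(h^\varepsilon(\sigma,0),\sigma):\sigma\in\mathbb{S}^1\}$ is an invariant closed curve of $P^\varepsilon$, and at $\varepsilon=0$ this curve is $\Gamma=\{\rho=0\}$, a normally hyperbolic attracting invariant circle of the diffeomorphism $P^0$. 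The smooth-dependence-on-parameters version of Fenichel's theorem applied to the family $P^\varepsilon$ yields $C^r$-dependence of the invariant circle on $\varepsilon$, and since $h^0\equiv 0$, a Taylor expansion in $\varepsilon$ produces $|h^\varepsilon(\cdot,0)|_{C^1}=O(\varepsilon)$, hence in particular the required bound. Lemma \ref{lemmareduction} then delivers $|h^\varepsilon_\sigma|_{C^0}<\widetilde{C}\varepsilon$. Finally, for $h^\varepsilon_\tau$, I would differentiate the invariance relation $\rho(t)=h^\varepsilon(\sigma(t),\tau(t))$ along the flow of $E^{\varepsilon,\mu}$ to obtain
\[
h^\varepsilon_\sigma(\sigma,\tau)\bigl(g(h^\varepsilon,\sigma)+\varepsilon S\bigr)+h^\varepsilon_\tau(\sigma,\tau)=f(h^\varepsilon,\sigma)+\varepsilon R.
\]
Because $f(0,\sigma)=A(\sigma)\cdot 0+f_2(0,\sigma)=0$, the mean-value inequality gives $|f(h^\varepsilon,\sigma)|\leq C|h^\varepsilon|$; combining this with the previously established $O(\varepsilon)$ bounds on $h^\varepsilon$ and $h^\varepsilon_\sigma$ and the boundedness of $g,R,S$ provides $|h^\varepsilon_\tau|_{C^0}=O(\varepsilon)$.

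The main obstacle is extracting the $C^1$-$O(\varepsilon)$ estimate on the invariant circle of $P^\varepsilon$ in the second step \emph{uniformly in} $\mu\in(0,\widetilde{\mu}]$. Since $P^\varepsilon$ is the time-$T\mu^\ell$ map, both the normal hyperbolicity of $\Gamma$ under $P^0$ and the size of the difference $P^\varepsilon-P^0$ degenerate as $\mu\to 0$; getting a $\mu$-uniform bound requires quantifying this trade-off through Gronwall-type estimates on the first variational equation along the torus, exactly in the spirit of the manipulations in the proof of Lemma \ref{lemmareduction}. Dovetailing this with the continuation argument of the first step—which, unlike in Proposition \ref{propositionconvergence}, must be carried out purely within $[0,\bar{\varepsilon}_\mu)$ because Proposition \ref{prop:main} is not yet available—is where the bulk of the technical work lies.
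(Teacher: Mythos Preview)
Your first and third steps (the $C^0$ bound via Lemma \ref{lemmaapriori} plus a continuation argument inside $[0,\bar\varepsilon_\mu)$, and the $h^\varepsilon_\tau$ bound via the invariance identity and $f(0,\sigma)=0$) are exactly what the paper does.

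The gap is in your second step, and it is precisely the obstacle you flag: a Fenichel smooth-dependence argument applied to the stroboscopic map $P^\varepsilon$ cannot give a $\mu$-uniform $O(\varepsilon)$ bound, because $P^0$ is the time-$T\mu^\ell$ map of the unperturbed planar flow and hence tends to the identity as $\mu\to 0$, so its normal hyperbolicity degenerates at exactly the same rate as $\|P^\varepsilon-P^0\|$ shrinks. Invoking ``Gronwall-type estimates in the spirit of Lemma \ref{lemmareduction}'' does not by itself resolve this, since Lemma \ref{lemmareduction} already assumes the bound $|h^\varepsilon_\sigma(\cdot,0)|<C_0\varepsilon$ you are trying to prove.

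The paper circumvents this degeneration by abandoning the single-period Poincar\'e map altogether. It fixes a time $T_0>0$, independent of $\mu$, large enough that the \emph{unperturbed} variational matrix satisfies $\det\Psi^0(t,q)\le M_0^2/4$ for all $t\ge T_0$ (via Liouville's formula and $\lambda_P<1$), and then for each $\mu$ chooses $T_\mu\in[T_0,T_0+1)$ equal to an integer multiple of the perturbation period $T\mu^\ell$. Over this $O(1)$ time window the unperturbed contraction is nondegenerate, and Lemma \ref{gronwalllemma} gives $|\Psi^\varepsilon(T_\mu,q^\varepsilon)-\Psi^0(T_\mu,q^0)|\le C\varepsilon$ uniformly in $\mu$. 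Writing $h^\varepsilon_\sigma(p,0)$ as a M\"obius-type expression in $h^\varepsilon_\sigma(q^\varepsilon,0)$ with coefficients from $\Psi^\varepsilon(T_\mu,q^\varepsilon)$, the paper derives the self-improving inequality
\[
\sup_{\alpha}|h^\varepsilon_\sigma(\alpha,0)|\le \tfrac{27}{32}\sup_{\alpha}|h^\varepsilon_\sigma(\alpha,0)|+C_5\varepsilon,
\]
valid under an a priori smallness assumption $|h^\varepsilon_\sigma|_{C^0}<r_1$; a second continuation argument removes that assumption, and only then is Lemma \ref{lemmareduction} invoked to pass from $\tau=0$ to all $\tau$. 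The essential idea you are missing is this replacement of the degenerating time $T\mu^\ell$ by a $\mu$-bounded multiple $T_\mu\approx T_0$.
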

\begin{proof}
	Since (\ref{systempolartruncated}) has an attracting hyperbolic limit cycle, we can apply Lemma \ref{lemmaapriori} and obtain $N \subset\mathbb{R}^2$ and $C>0$ as stated. Let $R$ and $S$ be as in (\ref{eq:family}). Define
	\begin{align*}
		K:= \sup \left\{|R(\rho, \sigma, \tau/\mu^\ell, \mu)| + |S(\rho, \sigma, \tau/\mu^\ell, \mu)|: (\rho,\sigma,\tau) \in N \times \mathbb{R}, \, \mu \in \left(0,\, \frac{1}{\sqrt[\ell]{T}}\right]\right\} < \infty,
	\end{align*}
	and
	\begin{align*}
		r_0 : = \inf\{d(y,\Gamma): y \in \mathbb{R}^2 \setminus N\}.
	\end{align*}
	Observe that, by definition of $r_0$, if $|h^\varepsilon|_{C^0}<r_0$, then it follows that $M(\varepsilon,\mu) \subset N \times \mathbb{S}^1$.
	
	Let $\mu_1$ be given by
\begin{equation}\label{mu1}
		\mu_1 := \frac{1}{2} \min \left\{\frac{1}{\sqrt[\ell]{T}}, \, \frac{r_0}{CK} \right\},
\end{equation}
	and fix $\mu \in (0,\mu_1]$. Define $I^\mu$ as the maximal interval contained in $[0,\bar{\varepsilon}_{\mu})$ with left endpoint at 0 such that, for $\varepsilon \in I^\mu$, the function $h^\varepsilon$ satisfies $|h^\varepsilon|_{C^0}<r_0$.
	
	It is clear that $I^\mu$ is non-empty, because $0 \in I^\mu$. Furthermore, by definition of $\bar{\varepsilon}_{\mu}$, Fenichel's Theorem can be applied for every $\varepsilon \in I^\mu \subset [0,\bar{\varepsilon}_{\mu}) \subset A^\mu$, guaranteeing that $I^\mu$ is relatively open in $[0,\bar{\varepsilon}_{\mu})$. 
	
	We shall show that $I^\mu$ is relatively closed in $[0,\bar{\varepsilon}_{\mu})$, so that, by connectedness, we may conclude that $I^\mu = [0,\bar{\varepsilon}_{\mu})$. Let $\alpha_{\mu}$ be the least upper bound of $I^\mu\subset[0,\bar{\varepsilon}_{\mu})$. It follows immediately that $\alpha_{\mu} \leq \bar{\varepsilon}_{\mu}$. If $\alpha_{\mu}=\bar{\varepsilon}_{\mu}$, then $I^\mu=[0,\bar{\varepsilon}_{\mu})$, and we are done. 
	
	Alternatively, suppose that $\alpha_{\mu} \in [0,\bar{\varepsilon}_{\mu}) \subset A^\mu$.  Then, it follows that $[0,\alpha_{\mu}] \in [0,\bar{\varepsilon}_{\mu}) \subset A^\mu$. Therefore, Fenichel's Theorem ensures that the family $h^\varepsilon$ is continuous with respect to its parameter $\varepsilon$ if $0\leq \varepsilon \leq \alpha_{\mu}$. Since $0< \mu \leq \mu_1$ and $|h^\varepsilon|_{C^0} < r_0$ for all $\varepsilon \in [0,\alpha_{\mu})$, it follows by Lemma \ref{lemmaapriori} that 
	\begin{align*}
		|h^\varepsilon|_{C^0}< CK \alpha_{\mu} < CK \bar{\varepsilon}_{\mu} \leq CK \mu \leq CK \widetilde{\mu} \leq \frac{r_0}{2},
	\end{align*} 
	for all $\varepsilon \in [0,\alpha_{\mu})$. From continuity of the family $h^\varepsilon$ with respect to $\varepsilon$ for $\varepsilon \in [0,\alpha_{\mu})$, it follows by taking the limit as $\varepsilon \to \alpha_{\mu}$ that
	\begin{align*}
		|h^{\alpha_{\mu}}|_{C^0} \leq \frac{r_0}{2} < r_0.
	\end{align*}
	Thus, we conclude that $\alpha_{\mu} \in I^\mu$, so that $I^\mu$ is relatively closed. 
	
	That said, it follows that $I^\mu = [0,\bar{\varepsilon}_{\mu})$ for every $\mu \in (0,\mu_1]$, which means that 
	\begin{align} \label{ineq:deltahc0}
		|h^\varepsilon|_{C^0} < CK\varepsilon = \widetilde{C}_1 \varepsilon,
	\end{align}
	for all $\mu \in (0,\mu_1]$ and all $\varepsilon \in [0,\bar{\varepsilon}_{\mu})$, where we have defined $\widetilde{C}_1:=CK$ for conciseness.
	
	Once again, let $\Psi^\varepsilon(s,q)$ be the principal fundamental matrix solution of $E^{\varepsilon,\mu}$ at $t=0$ along $\gamma^\varepsilon(s,q)$. Observe that $\gamma^0(s,q)=(0,\sigma^0(s,q),s)$ is a solution to the unperturbed system $E^0$:
\[
		E^{0}:
		\begin{cases}
			\rho'=f(\rho,\sigma),  \\
			\sigma'= g(\rho, \sigma),  \\
			\tau'=1,
		\end{cases}
\]
	By considering that $\gamma^0$ must be a solution to $E^0$, it follows that $f(0,\sigma^0(s,q)) = 0$. Hence, since $\big(f(0,\sigma^0(s,q)),g(0,\sigma^0(s,q)),0\big)$ is a solution to the first variational equation of $E^0$ along $\gamma^0(s,q)$, we have that 
	\begin{align} \label{eq:variationalatzero}
		\Psi^0(s,q)
		\begin{pmatrix}
			0 \\
			g(0,q) \\
			0
		\end{pmatrix} = 
		\begin{pmatrix}
			0 \\ 
			g(0,\sigma^0(s,q)) \\
			0
		\end{pmatrix}.
	\end{align}
	From (\ref{eq:variationalatzero}), it follows that 
\[
	\psi^0_{22}(s,q) = \frac{g(0,\sigma^0(s,q))}{g(0,q)}.
\]
	Therefore, since $g$ cannot vanish on the limit cycle $\Gamma$, there is a constant $M_0>0$ such that $|\psi^0_{22}(s,q)|\geq M_0$ for all $s \in \mathbb{R}$ and all $q \in \mathbb{S}^1$.
	
	Notice that $\Psi^0(T,q)$ has two eigenvalues equal to $1$, corresponding to $(f(0,q),g(0,q),1)$ and $(0,0,1)$, and one eigenvalue which is equal to the eigenvalue $\lambda_P$ of the derivative of the Poincaré map on  any transversal section of $\Gamma$ at its fixed point, thus less than one. Hence, it follows that $\det \Psi^0(T,q)= \lambda_P<1$ for all $q \in [0,2\pi]$. Moreover, if $\mathcal{F}^0$ denotes the vector field appearing in $E^0$, the function
	\begin{align} \label{def:I(t,q)}
		I(t,q) := \exp \int_0^t \tr \, \big( d\mathcal{F}^0(0,\sigma^0(s,q),s) \big) ds
	\end{align}
	has a maximum $M_I$ in the domain $[0,T] \times \mathbb{S}^1$. For any $t\geq0$, we may find $z \in \mathbb{N}$ and $r_t \in [0,T)$ such that $t= z T + r_t$. Then, by Liouville's formula, it follows that
	\begin{align} \label{eq:liouvillesformula}
		\det \Psi^0(t,q) = \exp \int_0^{zT+r_t} \tr \big( d\mathcal{F}^0(0,\sigma^0(s,q),s) \big) ds \leq M_I \,(\det\Psi^0(T,q))^z.
	\end{align}
	Therefore, by taking $T_0$ sufficiently large, we can ensure that $\det \Psi^0(t,q) \leq M_0^2/4$ for all $t\geq T_0$. Now, since $0<\mu<\mu_1$, we know that $0<T\, \mu^\ell<1$, so that we can define $n_\mu$ as the smallest positive integer such that $T_0\leq T \mu^\ell n_\mu < T_0+1$. We also define the value $T_\mu:= T\mu^\ell n_\mu$. We remark that, even though $T_\mu$ varies depending on the choice of $\mu$, its value is always an integer multiple of the period of the perturbation terms present in $E^{\varepsilon,\mu}$, and that $T_\mu$ tends to $T_0$ as $\mu$ is made close to zero.
	
	Observe that, for any $\mu \in (0,\mu_1]$ and any $\varepsilon \in [0,\bar{\varepsilon}_{\mu})$, the function
	\begin{align*}
		v_2^\varepsilon(s,q) = 
		\begin{pmatrix}
			h_\sigma^\varepsilon(\sigma^\varepsilon(s,q),s) \, y^\varepsilon(s,q) \\
			y^\varepsilon(s,q) \\
			0
		\end{pmatrix} = \Psi^\varepsilon(s,q) \cdot
		\begin{pmatrix}
			h_\sigma^\varepsilon(q,0) \\
			1 \\
			0
		\end{pmatrix}
	\end{align*}
	is a solution of the first variational equation of $E^{\varepsilon,\mu}$ along $\gamma^\varepsilon(s,q)$. Thus, it follows that 
	\begin{align} \label{eq:hsigmaandpsi}
		h_\sigma^\varepsilon (\sigma^\varepsilon(s,q),s) = \frac{\psi_{11}^\varepsilon(s,q) h_\sigma^\varepsilon(q,0)+\psi^\varepsilon_{12}(s,q)}{\psi_{21}(s,q) h_\sigma^\varepsilon(q,0)+\psi_{22}^\varepsilon(s,q) }.
	\end{align}
	
	Next, we shall prove the following claim: there are $\mu_2>0$, $C'>0$, and $r_1>0$ such that, for all $\mu \in (0,\mu_2]$ and all $\varepsilon \in [0,\bar{\varepsilon}_{\mu})$, if $|h_\sigma^\varepsilon|_{C^0}\leq r_1$, then $|h_\sigma^\varepsilon|_{C^0}\leq C' \varepsilon$. In order to do so, let $\mu \in (0,\mu_1]$, $\varepsilon \in [0,\bar{\varepsilon}_{\mu})$, $p \in [0,2\pi]$ and define $q^\varepsilon$ by $p=\sigma^\varepsilon(T_\mu,q^\varepsilon)$. Then, by setting $s=T_\mu$ in (\ref{eq:hsigmaandpsi}), the following identity is obtained:
	\begin{align} \label{eq:hsigmap}
		h_\sigma^\varepsilon (p,0) = \frac{\psi_{11}^\varepsilon(T_\mu,q^\varepsilon) h_\sigma^\varepsilon(q^\varepsilon,0)+\psi^\varepsilon_{12}(T_\mu,q^\varepsilon)}{\psi_{21}(T_\mu,q^\varepsilon) h_\sigma^\varepsilon(q^\varepsilon,0)+\psi_{22}^\varepsilon(T_\mu,q^\varepsilon) }.
	\end{align}
	In particular, by taking $\varepsilon=0$, we obtain
	\begin{align} \label{eq:hsigmapzero}
		\frac{\psi^0_{12}(T_\mu,q^0)}{\psi_{22}^0(T_\mu,q^0) } =0.
	\end{align}

	By Lemma \ref{lemmatecnico} applied with $B=[T_0,T_0+1)$, there is $r_1>0$ such that, if $t \in[T_0,T_0+1)$, $\xi \in \mathbb{R}$, and $D \in M_2(\mathbb{R})$ satisfy $|\xi|< r_1$ and $|D-\Psi^0(t,q)|<r_1$, then
	\begin{align} \label{ineq:conlusionofthelemma}
		|d_{21} \xi +d_{22}| > \frac{2}{3} M_0, \qquad |\det D - \det \Psi^0(t,q)|< \frac{1}{8}M_0^2.
	\end{align}
	Furthermore, since $T_\mu$ must be in the bounded set $[T_0,T_0+1)$, Lemma \ref{gronwalllemma} applied to $E^{\varepsilon,\mu}$ ensures that there are $C_1,C_2>0$ such that
	\begin{align} \label{ineq:gronwall1}
		|h^\varepsilon(q^\varepsilon,0)| + |q^\varepsilon- q^0| \leq C_1 (|h^\varepsilon(p,0)|+\varepsilon)
	\end{align}
	and
	\begin{align}\label{ineq:gronwall2}
		|\Psi^\varepsilon(T_\mu,q^\varepsilon) - \Psi^0(T_\mu,q^0)| \leq C_2(|h^\varepsilon(q^\varepsilon,0)| + |q^\varepsilon-q^0|+\varepsilon),
	\end{align}
	for all $\mu \in (0,\mu_1]$, $\varepsilon \in [0,\bar{\varepsilon}_{\mu})$, and all $p \in [0,2\pi]$.
	Considering (\ref{ineq:deltahc0}) and (\ref{ineq:gronwall1}), it follows that there is $C_3>0$ such that 
\[
		|q^\varepsilon-q^0| <C_3 \varepsilon.
\]
	Combining (\ref{ineq:deltahc0}), (\ref{ineq:gronwall1}), and (\ref{ineq:gronwall2}), it follows that there is $C_4>0$ such that
	\begin{align} \label{ineq:psiepsilon-psi0}
		|\Psi^\varepsilon(T_\mu,q^\varepsilon) - \Psi^0(T_\mu,q^0)| \leq C_4\varepsilon \leq C_4 \mu_1,
	\end{align}
	for all $\mu \in (0,\mu_1]$, $\varepsilon \in [0,\bar{\varepsilon}_{\mu})$, and all $p \in [0,2\pi]$.
	
	From (\ref{eq:hsigmap}) and (\ref{eq:hsigmapzero}), it follows that 
	\begin{align} \label{eq:deltahsigma}
		|h_\sigma^\varepsilon(p,0) |\leq \left|\frac{\psi_{11}^\varepsilon(T_\mu,q^\varepsilon)h_\sigma^\varepsilon(q^\varepsilon,0) +\psi_{12}^\varepsilon(T_\mu,q^\varepsilon)}{\psi_{21}^\varepsilon(T_\mu,q^\varepsilon)h_\sigma^\varepsilon(q^\varepsilon,0) +\psi_{22}^\varepsilon(T_\mu,q^\varepsilon)}  - \frac{\psi_{12}^\varepsilon(T_\mu,q^\varepsilon)}{\psi_{22}^\varepsilon(T_\mu,q^\varepsilon)} \right| 
		+\left|\frac{\psi_{12}^\varepsilon(T_\mu,q^\varepsilon)}{\psi_{22}^\varepsilon(T_\mu,q^\varepsilon)} - \frac{\psi^0_{12}(T_\mu,q^0)}{\psi_{22}^0(T_\mu,q^0) }\right|.
	\end{align}
	In order to simplify notation, we name the first and second summand on the right-hand side of the inequality above as $I$ and $II$, respectively. To estimate those quantities, we define the function $u:\mathbb{R} \times M_2(\mathbb{R}) \to \mathbb{R}$ by
	\begin{align*}
		u(\xi,D) = \frac{d_{11}\xi + d_{12}}{d_{21}\xi + d_{22}},
	\end{align*}
	so that $I$ and $II$ are given by
	\begin{align*}
		&I= \big|u\big(h^\varepsilon_\sigma(q^\varepsilon,0),\Psi^\varepsilon(T_\mu,q^\varepsilon)\big) - u\big(0,\Psi^\varepsilon(T_\mu,q^\varepsilon)\big) \big|, \\
		&II=\big|u\big(0,\Psi^\varepsilon(T_\mu,q^\varepsilon)\big) - u\big(0,\Psi^0(T_\mu,q^0)\big) \big|.
	\end{align*}
	
	To estimate $I$, observe that 
	\begin{align*}
		u_\xi (\xi, D) = \frac{\det D}{(d_{21}\xi + d_{22})^2}.
	\end{align*}
	Hence, the mean value theorem applied to the real function $\xi \mapsto u(\xi,D)$ guarantees that there is $\xi$ between $0$ and $h^\varepsilon_\sigma(q^\varepsilon,0)$ such that:
\[
		I \leq \left|\frac{\det \Psi^\varepsilon(T_\mu,q^\varepsilon)}{(\psi^\varepsilon_{21}(T_\mu,q^\varepsilon)\xi+\psi^\varepsilon_{22}(T_\mu,q^\varepsilon))^2}\right| |h^\varepsilon_\sigma(q^\varepsilon,0)|.
\]
	Define 
\[
		\mu_2:=\min\left\{\mu_1, \frac{r_1}{2C_4} \right\}.
\]
	By (\ref{ineq:psiepsilon-psi0}) and considering that $0\leq\varepsilon\leq \bar{\varepsilon}_{\mu}<\mu$, we conclude that by further restricting the value of $\mu$ to be in $(0,\mu_2]$, it follows that $|\Psi^\varepsilon(T_\mu,q^\varepsilon)-\Psi^0(T_\mu,q^0)|<r_1$. Suppose then that $|h^\varepsilon_\sigma|_{C^0}<r_1$, as in the hypotheses of the claim. By (\ref{ineq:conlusionofthelemma}) and an application of the triangle inequality, we conclude that 
	\begin{align}\label{ineq:I}
		I \leq \frac{3}{8} M_0^2 \cdot \frac{9}{4 M_0^2} \cdot |h^\varepsilon_\sigma(q^\varepsilon,0)| \leq \frac{27}{32} \; \sup_{\alpha \in [0,2\pi]} |h_\sigma^\varepsilon(\alpha,0)|,
	\end{align}
	for all $\mu \in (0,\mu_2]$, $\varepsilon \in [0,\bar{\varepsilon}_{\mu})$, and all $p \in [0,2\pi]$.
	
	Now, to estimate $II$, we remark that $\{\Psi^0(s,q): s \in [T_0,T_0+1], q \in \mathbb{S}^1\}$ is a compact set.  Hence, considering (\ref{ineq:psiepsilon-psi0}) combined with an application of the triangle inequality, it follows that the set $\{\Psi^\varepsilon(T_\mu,q): \mu \in (0,\mu_2],  \; \varepsilon \in [0,\bar{\varepsilon}_{\mu}), \; q \in \mathbb{S}^1 \} \subset M_2(\mathbb{R})$ is bounded. Therefore, the function $u$ must be Lipschitz continuous on the set
	\begin{align*}
		\{\big(0,\Psi^\varepsilon(T_\mu,q)\big) \in \mathbb{R} \times M_2(\mathbb{R}): \mu \in(0,\mu_2], \; \varepsilon \in [0,\bar{\varepsilon}_{\mu}), \; q \in \mathbb{S}^1 \}.
	\end{align*}
	Thus, there is $L_1>0$ such that
	\begin{align*}
		II \leq L_1 |\Psi^\varepsilon(T_\mu,q^\varepsilon) - \Psi^0(T_\mu,q^0)|.
	\end{align*}
	Considering (\ref{ineq:psiepsilon-psi0}) and defining the constant $C_5:= L_1 C_4$, we conclude that $II\leq C_5 \varepsilon$ for all $\mu \in(0,\mu_2]$, $ \varepsilon \in [0,\bar{\varepsilon}_{\mu})$, and all $p \in [0,2\pi]$. Hence, taking into consideration (\ref{eq:deltahsigma}) and (\ref{ineq:I}), it follows that 
	\begin{align*}
		|h_\sigma^\varepsilon(p,0)| \leq I + II \leq \frac{27}{32} \; \sup_{\alpha \in [0,2\pi]} |h_\sigma^\varepsilon(\alpha,0)| + C_5\varepsilon,
	\end{align*} 
	wherefrom, by taking the supremum on the left-hand side, we conclude that
	\begin{align*}
		\sup_{\alpha \in [0,2\pi]} |h_\sigma^\varepsilon(\alpha,0) | \leq \frac{32}{5} C_5 \varepsilon,
	\end{align*}
	for all $\mu \in(0,\mu_2]$ and all $\varepsilon \in [0,\bar{\varepsilon}_{\mu})$. Then, by Lemma \ref{lemmareduction}, it follows that there is $C'>0$ such that 
	\begin{align*}
		|h^\varepsilon_\sigma |_{C^0} \leq C' \varepsilon,
	\end{align*}
	for all $\mu \in(0,\mu_2]$ and all $\varepsilon \in [0,\bar{\varepsilon}_{\mu})$, finishing the proof of the claim.
	
	Having proved the claim, we proceed to showing that there are $\widetilde{C}_2>0$ and $\mu_3>0$ such that $|h_\sigma^\varepsilon |_{C^0} < \widetilde{C}_2 \varepsilon$ for all $\mu \in (0,\mu_3]$ and all $\varepsilon \in [0,\bar{\varepsilon}_{\mu})$. In order to do so, define
	\begin{align*}
		\mu_3:= \min \left\{\mu_2, \frac{r_1}{2C'} \right\}.
	\end{align*}
	For each $\mu \in (0,\mu_3]$, let $J^\mu$ be the maximal interval contained in $[0,\bar{\varepsilon}_{\mu})$ with left endpoint at $0$ such that, for $\varepsilon \in J^\mu$, the function $h^\varepsilon$ satisfies $|h^\varepsilon_\sigma|_{C^0}\leq r_1$. We will prove that $J^\mu=[0,\bar{\varepsilon}_{\mu})$.
	
	Observe that $J^\mu$ is non-empty because $0 \in J^\mu$. Moreover, Fenichel's theorem ensures that $J^\mu$ is relatively open in $[0,\bar{\varepsilon}_{\mu})$. Thus, if we prove that $J^\mu$ is also relatively closed in $[0,\bar{\varepsilon}_{\mu})$, by connectedness of $[0,\bar{\varepsilon}_{\mu})$, we will have that $J^\mu = [0,\bar{\varepsilon}_{\mu})$.
	
	Let $\alpha'_{\mu}$ be the least upper bound of $J^\mu \subset [0,\bar{\varepsilon}_{\mu})$. Then, we know that $\alpha'_{\mu}\leq \bar{\varepsilon}_{\mu}$. If $\alpha'_{\mu} = \bar{\varepsilon}_{\mu}$, then $J^\mu = [0,\bar{\varepsilon}_{\mu})$, and we are done. On the other hand, if $\alpha'_{\mu}<\bar{\varepsilon}_{\mu}$, then $[0,\alpha'_{\mu}] \subset [0,\bar{\varepsilon}_{\mu}) \subset A^\mu$. In that case, Fenichel's Theorem ensures that the family $h^\varepsilon$ is continuous in the $C^1$-norm with respect to $\varepsilon$ if $0\leq \varepsilon \leq \alpha$. By definition of $J^\mu$, since $\mu \in (0,\mu_3]$, and considering the claim, it follows that
	\begin{align*}
		|h^{\varepsilon}_\sigma|_{C^0} \leq C' \varepsilon < C' \bar{\varepsilon}_{\mu} \leq C' \mu \leq C' \frac{r_1}{2C'} \leq \frac{r_1}{2},
	\end{align*}
	for all $\varepsilon \in [0,\alpha'_{\mu})$. By passing to the limit $\varepsilon \to \alpha'_{\mu}$, which we can do because of the continuity in the $C^1$-norm of the family $h^\varepsilon$, it follows that 
	\begin{align*}
		|h^{\alpha'_{\mu}}_\sigma|_{C^0}\leq r_1/2 \leq r_1,
	\end{align*}
	so that $\alpha'_{\mu} \in J^\mu$, proving that $J^\mu$ is relatively closed. Hence, we conclude that $J^\mu=[0,\bar{\varepsilon}_{\mu})$ if $\mu \in (0,\mu_3]$.
	
	Let us define $\widetilde{C}_2 = C'>0$. The fact that $J^\mu = [0,\bar{\varepsilon}_{\mu})$ if $\mu \in (0,\mu_3]$, in combination with the claim, guarantees that
	\begin{align} \label{ineq: resultsigma}
		|h^\varepsilon_\sigma|_{C^0} \leq C'\varepsilon = \widetilde{C}_2 \varepsilon,
	\end{align} 
	for all $\mu \in (0,\mu_3]$ and all $\varepsilon \in [0,\bar{\varepsilon}_{\mu})$.
	
	Finally, let us estimate $h_\tau^\varepsilon$. Since
	\begin{align*}
		f(0,\sigma^0(s,q)) =0,
	\end{align*}
	for all $s \in \mathbb{R}$ and all $q \in [0,2\pi]$, it follows that $f$ vanishes when $\rho = 0$, regardless of the choice of $\sigma \in [0,2\pi]$. Moreover, $f$ is Lipschitz continuous on the compact set $[-\widetilde{C}_1 \mu_3,\widetilde{C}_1 \mu_3] \times [0,2\pi]$. Since $|h^\varepsilon |_{C^0}< \widetilde{C}_1 \varepsilon < \widetilde{C}_1 \mu_3$, we conclude that there is $L_3>0$ such that
	\begin{align} \label{ineq:f-h0g}
		|f(h^\varepsilon(\sigma,\tau),\sigma)|=|f(h^\varepsilon(\sigma,\tau),\sigma) - f(0,\sigma)| \leq L_3 |h^\varepsilon(\sigma,\tau)| < L_3 \widetilde{C}_1 \varepsilon,
	\end{align}
	for all $\sigma \in [0,2\pi]$, $\tau \in [0,T \mu^\ell]$, $\mu \in (0,\mu_3]$, and all $\varepsilon \in [0,\bar{\varepsilon}_{\mu})$. The invariance of the graph of $h^\varepsilon$ with respect to $E^{\varepsilon,\mu}$ is equivalent to the following identities:
\[\begin{aligned}
		h_\sigma^\varepsilon(\sigma^\varepsilon,s) (\sigma^\varepsilon)' + h_\tau^\varepsilon (\sigma^\varepsilon,s) &= f(h^\varepsilon(\sigma^\varepsilon,s),\sigma^\varepsilon) + \varepsilon R (h^\varepsilon(\sigma^\varepsilon,s), \sigma^\varepsilon, s/\mu^\ell,\mu) \nonumber \\
		(\sigma^\varepsilon)' &= g(h^\varepsilon(\sigma^\varepsilon,s),\sigma^\varepsilon) + \varepsilon S (h^\varepsilon(\sigma^\varepsilon,s), \sigma^\varepsilon, s/\mu^\ell,\mu),
	\end{aligned}
	\]
	where the arguments of $\sigma^\varepsilon$ and its derivative $(\sigma)'$ have been omitted for conciseness, but should be read as $(s,q)$. Since $(s,q)$ may be chosen arbitrarily, and by substituting the value of $(\sigma^\varepsilon)'$ given in the second equation in the first one, it follows that 
	\begin{align} \label{eq:identityhtau}
		h_\sigma^\varepsilon(\sigma,\tau)\big( g(h^\varepsilon(\sigma,\tau),\sigma) + \varepsilon S(h^\varepsilon(\sigma,\tau),\sigma,&\tau/\mu^\ell,\mu)\big)+ h_\tau^\varepsilon(\sigma,\tau) =  \nonumber \\ &f(h^\varepsilon(\sigma,\tau),\sigma)+ \varepsilon R(h^\varepsilon(\sigma,\tau),\sigma,\tau/\mu^\ell,\mu),
	\end{align}
	for all $\mu \in(0,\mu_3]$, $\varepsilon \in [0,\bar{\varepsilon}_{\mu})$, $\sigma \in [0,2\pi]$, and all $\tau \in [0,T \mu^\ell]$. 
	
	We define the function $H(\sigma,\tau,\varepsilon)$ by $h^\varepsilon(\sigma,\tau) = \varepsilon H(\sigma,\tau,\varepsilon)$. Observe that (\ref{ineq: resultsigma}) ensures that $H_\sigma(\sigma,\tau,\varepsilon) \leq \widetilde{C}_2$ for all $\mu \in(0,\mu_3]$, $\varepsilon \in [0,\bar{\varepsilon}_{\mu})$, $\sigma \in [0,2\pi]$, and all $\tau \in [0,T \mu^\ell]$.  Hence, from (\ref{eq:identityhtau}), we conclude that:
	
	\begin{align} \label{eq:invarianceH}
		\big(g(h^\varepsilon(\sigma,\tau),\sigma) + \varepsilon S(h^\varepsilon(\sigma,\tau),\sigma,\tau/\mu^\ell,\mu) \big) H_\sigma(\sigma,\tau,\varepsilon) + H_\tau(\sigma,\tau,\varepsilon) &=\nonumber \\
		R(h^\varepsilon(\sigma,\tau),\sigma,\tau/\mu^\ell,\mu) &+ \frac{1}{\varepsilon} \big(f(h^\varepsilon(\sigma,\tau),\sigma) \big), 
	\end{align}
	for all $\mu \in(0,\mu_3]$, $\varepsilon \in [0,\bar{\varepsilon}_{\mu})$, $\sigma \in [0,2\pi]$, and all $\tau \in [0,T \mu^\ell]$. Observe that $g$ is bounded, say by $K_g>0$, on the compact set $[-\widetilde{C}_1 \mu_3,\widetilde{C}_1 \mu_3] \times [0,2\pi]$. Furthermore, $R$ and $S$ are bounded by the constant $K>0$ defined at the beginning of the proof of this lemma for $\mu \in(0,\mu_3]$, $\varepsilon \in [0,\bar{\varepsilon}_{\mu})$, $\sigma \in [0,2\pi]$, and $\tau \in [0,T \mu^\ell]$. Therefore, an application of the triangle inequality combined with inequality (\ref{ineq:f-h0g}) ensures that
	\begin{align} \label{ineq:Htau}
		|H_\tau|_{C^0}\leq (K_g + \mu_3 K)\widetilde{C}_2 + K  + L_3\widetilde{C}_1,
	\end{align}
	for all $\mu \in(0,\mu_3]$, $\varepsilon \in [0,\bar{\varepsilon}_{\mu})$, $\sigma \in [0,2\pi]$, and all $\tau \in [0,T \mu^\ell]$. Finally, by defining the constant $\widetilde{C}_3 :=(K_g + \mu_3 K)\widetilde{C}_2 + K + L_3\widetilde{C}_1>0$ and by the definition of $H$, we conclude from (\ref{ineq:Htau}) that 
\[
		|h^\varepsilon_\tau|_{C^0} \leq \widetilde{C}_3 \varepsilon,
\]
	for all $\mu \in(0,\mu_3]$, $\varepsilon \in [0,\bar{\varepsilon}_{\mu})$, $\sigma \in [0,2\pi]$, and all $\tau \in [0,T \mu^\ell]$. The proof is thus concluded by defining $\widetilde{C}:=\max\{\widetilde{C}_1,\widetilde{C}_2,\widetilde{C}_3\}$ and $\widetilde{\mu}:= \mu_3$.
\end{proof}

Having proved the first part of the necessary estimates, we proceed to finding an upper and a lower bound for the value of $y^\varepsilon(s,q)$, as in \cite[Lemma 8.4]{MR1740943}. This argument requires deeper revision, since the value of the exponent $\ell$ is actively present therein. The following Lemma will be essential in the task of generalizing that result.
\begin{lemma} \label{lemmaidentity}
	Let $S_\tau$ denote the derivative of $S$ with respect to its third entry. For all $n\geq2$ the following equality holds:
	\begin{align} \label{eq:lemmaest}
		\frac{\varepsilon^2}{\mu^\ell} \frac{S S_\tau}{G^2} =& \sum_{k=2}^n \frac{\varepsilon^k}{k} \frac{d}{ds} \left( \frac{S^k}{G^k}\right) - (S_\rho h_\sigma^\varepsilon G + S_\rho h^\varepsilon_\tau+S_\sigma G) \sum_{k=2}^{n}\varepsilon^k\frac{ S^{k-1}}{G^k} \nonumber \\
		&+ (G_\rho h^\varepsilon_\sigma G + G_\rho h^\varepsilon_\tau + G_\sigma G) \sum_{k=2}^{n} \varepsilon^k \frac{S^k}{G^{k+1}} + \frac{\varepsilon^{n+1}}{\mu^\ell} \frac{S^n  S_\tau}{G^{n+1}},
	\end{align}
	where the argument of the function $G$ and its derivatives is $(h^\varepsilon(\sigma^\varepsilon(s,q),\tau(s)),\sigma^\varepsilon(s,q),\tau(s),\mu,\varepsilon)$, the argument of $h^\varepsilon$ and its derivatives is given by $(\sigma^\varepsilon(s,q),\tau(s))$ and the argument of $S$ and its derivatives is given by the expression $(h^\varepsilon(\sigma^\varepsilon(s,q),\tau(s)),\sigma^\varepsilon(s,q),\tau(s)/\mu^\ell,\mu)$.
\end{lemma}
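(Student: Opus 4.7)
My plan is to prove the identity by direct computation, recognizing it as a telescoping-sum identity that emerges when one differentiates $S^k/G^k$ along the flow and sums over $k$; an induction on $n$ gives an equivalent presentation.

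First I would compute $dS/ds$ and $dG/ds$ along the invariant manifold. Since $\sigma' = G$ and $\tau' = 1$ on solutions of $E^{\varepsilon,\mu}$, the chain rule combined with invariance of the graph of $h^\varepsilon$ yields $dh^\varepsilon/ds = h^\varepsilon_\sigma G + h^\varepsilon_\tau$. Substituting and recalling that the third entry of $S$ is $\tau/\mu^\ell$, and that $G = g + \varepsilon S$ with $g$ independent of $\tau$, I obtain
\[
\frac{dS}{ds} = S_\rho (h^\varepsilon_\sigma G + h^\varepsilon_\tau) + S_\sigma G + \frac{S_\tau}{\mu^\ell}, \qquad \frac{dG}{ds} = G_\rho (h^\varepsilon_\sigma G + h^\varepsilon_\tau) + G_\sigma G + \frac{\varepsilon\, S_\tau}{\mu^\ell}.
\]
The first three summands in each expression are exactly the parenthesized groups appearing in the claimed identity, so the remaining $S_\tau/\mu^\ell$ pieces are what must telescope.

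Next I would expand
\[
\frac{d}{ds}\left(\frac{S^k}{G^k}\right) = k\, \frac{S^{k-1}}{G^k}\, \frac{dS}{ds} - k\, \frac{S^k}{G^{k+1}}\, \frac{dG}{ds},
\]
multiply by $\varepsilon^k/k$, and sum from $k=2$ to $n$. The $S_\rho, S_\sigma$ contributions collect into $(S_\rho h^\varepsilon_\sigma G + S_\rho h^\varepsilon_\tau + S_\sigma G)\sum_{k=2}^{n}\varepsilon^k S^{k-1}/G^k$, and analogously the $G_\rho, G_\sigma$ contributions yield $-(G_\rho h^\varepsilon_\sigma G + G_\rho h^\varepsilon_\tau + G_\sigma G)\sum_{k=2}^{n}\varepsilon^k S^k/G^{k+1}$, reproducing two of the four terms on the right-hand side of the stated identity. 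The $S_\tau/\mu^\ell$ contributions amount to
\[
\frac{S_\tau}{\mu^\ell}\left(\sum_{k=2}^n \varepsilon^k \frac{S^{k-1}}{G^k} - \sum_{k=2}^n \varepsilon^{k+1}\frac{S^k}{G^{k+1}}\right);
\]
the re-indexing $k \mapsto k-1$ in the second sum shows that the overlapping interior terms cancel, leaving only the endpoint contributions $\varepsilon^2 S S_\tau/(\mu^\ell G^2)$ and $-\varepsilon^{n+1}S^n S_\tau/(\mu^\ell G^{n+1})$. Moving the first endpoint to the left-hand side gives precisely the stated equality.

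I do not anticipate any real obstacle; the argument is essentially bookkeeping. The one spot that deserves care is the mismatch in $\varepsilon$-powers between $\partial S/\partial \tau$ and $\partial G/\partial \tau$, since it is responsible for the length-one telescope at each end and for the unit coefficients multiplying the two isolated $S_\tau$-terms in the statement. An equivalent inductive route would verify the case $n=2$ by direct expansion and then check that passing from $n$ to $n+1$ contributes exactly $\varepsilon^{n+1}S^n S_\tau/(\mu^\ell G^{n+1}) - \varepsilon^{n+2}S^{n+1}S_\tau/(\mu^\ell G^{n+2})$, which is again precisely the telescope increment.
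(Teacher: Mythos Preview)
Your proposal is correct and follows essentially the same approach as the paper: the paper proves the identity by induction on $n$, differentiating $S^k/G^k$ at each step and rearranging, which is precisely the inductive route you sketch at the end; your direct summation with the telescoping $S_\tau$-terms is the unfolded version of that same computation.
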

\begin{proof}
	We will omit the arguments of the functions in order to simplify notation. The proof is done by induction. For $n=2$, we simply differentiate $S^2/G^2$ with respect to $s$:
	\begin{align*}
		\frac{d}{ds} \left( \frac{S^2}{G^2} \right) =  &\frac{2S(S_\rho h^\varepsilon_\sigma G + S_\rho h^\varepsilon_\tau + S_\sigma G)}{G^2} +\frac{2}{\mu^\ell }\frac{S S_\tau}{ G^2} \\&- \frac{2 S^2(G_\rho h^\varepsilon_\sigma G + G_\rho h^\varepsilon_\tau + G_\sigma G)}{G^3} -\frac{2\varepsilon}{\mu^\ell} \frac{S^2 S_\tau}{G^3}.
	\end{align*}
	Rearranging the terms and multiplying by $\varepsilon^2/2$, we get
	\begin{align*}
		\frac{\varepsilon^2}{\mu^\ell}\frac{S S_\tau}{ G^2} = 
		&\frac{\varepsilon^2}{2} \frac{d}{ds} \left( \frac{S^2}{G^2} \right) - \varepsilon^2 S \frac{(S_\rho h^\varepsilon_\sigma G + S_\rho h^\varepsilon_\tau + S_\sigma G)}{G^2} \\
		&+ \varepsilon^2 S^2 \frac{(G_\rho h^\varepsilon_\sigma G + G_\rho h^\varepsilon_\tau + G_\sigma G)}{G^3} + \frac{\varepsilon^3}{\mu^\ell}  \frac{S^2 S_\tau}{G^3},
	\end{align*}
	as wanted. 
	
	Suppose that \eqref{eq:lemmaest} is valid for some $n\geq2$. We must prove that it is also valid for $n+1$ in this case. We differentiate $S^{n+1}/G^{n+1}$ with respect to $s$:
	\begin{align*}
		\frac{d}{ds} \left( \frac{S^{n+1}}{G^{n+1}} \right) =  &\frac{(n+1)S^n(S_\rho h^\varepsilon_\sigma G + S_\rho h^\varepsilon_\tau + S_\sigma G)}{G^{n+1}} +\frac{(n+1)}{\mu^\ell }\frac{S^n S_\tau}{ G^{n+1}} \\&- \frac{(n+1) S^{n+1}(G_\rho h^\varepsilon_\sigma G + G_\rho h^\varepsilon_\tau + G_\sigma G)}{G^{n+2}} -\frac{(n+1)\varepsilon}{\mu^\ell} \frac{S^{n+1} S_\tau}{G^{n+2}}.
	\end{align*}
	Rearranging and multiplying by $\varepsilon^{n+1}/(n+1)$, we get:
	\begin{align*}
		\frac{\varepsilon^{n+1}}{\mu^\ell}\frac{S^n S_\tau}{ G^{n+1}} = 
		&\frac{\varepsilon^{n+1}}{(n+1)} \frac{d}{ds} \left( \frac{S^{n+1}}{G^{n+1}} \right) - \varepsilon^{n+1} S^n \frac{(S_\rho h^\varepsilon_\sigma G + S_\rho h^\varepsilon_\tau + S_\sigma G)}{G^{n+1}} \\
		&+ \varepsilon^{n+1} S^{n+1} \frac{(G_\rho h^\varepsilon_\sigma G + G_\rho h^\varepsilon_\tau + G_\sigma G)}{G^{n+2}} + \frac{\varepsilon^{n+2}}{\mu^\ell}  \frac{S^{n+1} S_\tau}{G^{n+2}},
	\end{align*}
	By substituting this expression in \eqref{eq:lemmaest}, we get
	\begin{align*}
		\frac{\varepsilon^2}{\mu^\ell} \frac{S S_\tau}{G^2} =& \sum_{k=2}^{n+1} \frac{\varepsilon^k}{k} \frac{d}{ds} \left( \frac{S^k}{G^k}\right) - (S_\rho h_\sigma^\varepsilon G + S_\rho h^\varepsilon_\tau+S_\sigma G) \sum_{k=2}^{n+1}\varepsilon^k\frac{ S^{k-1}}{G^k} \nonumber \\
		&+ (G_\rho h^\varepsilon_\sigma G + G_\rho h^\varepsilon_\tau + G_\sigma G) \sum_{k=2}^{n+1} \varepsilon^k \frac{S^k}{G^{k+1}} + \frac{\varepsilon^{n+2}}{\mu^\ell} \frac{S^{n+1}  S_\tau}{G^{n+2}},
	\end{align*}
	which is the formula for $n+1$. The lemma is thus proved.
\end{proof}
We proceed to effectively establishing bounds for the value of $y^\varepsilon(s,q)$. Once again, we ensure that those bounds do not depend on the particular choice of $\mu$. 
\begin{lemma} \label{lemmayepsilon}
	Suppose that (\ref{systempolartruncated}) has an attracting hyperbolic limit cycle $\Gamma=\{(0,\sigma):\sigma\in\s^1\}$. Then, there are $\widetilde{\mu}>0$ and $C>0$ such that
	\begin{align*}
		e^{-C} e^{-C \varepsilon |s|} \leq y^\varepsilon (s,q) \leq e^{C} e^{C \varepsilon |s|},
	\end{align*}
	for all $s \in \mathbb{R}$, $q \in [0,2\pi]$, $\mu \in (0,\widetilde{\mu}]$, and all $\varepsilon \in [0,\bar{\varepsilon}_{\mu})$.
\end{lemma}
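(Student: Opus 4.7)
The plan is to write $y^\varepsilon(s,q) = \exp(\alpha^\varepsilon(s,q))$ with
\[
\alpha^\varepsilon(s,q) := \int_0^s (G_\rho\, h^\varepsilon_\sigma + G_\sigma)\, dt,
\]
and reduce the claim to the single estimate $|\alpha^\varepsilon(s,q)| \le C_0 + C_1\,\varepsilon|s|$ with constants uniform in $\mu\in(0,\widetilde{\mu}]$ and $\varepsilon\in[0,\bar{\varepsilon}_{\mu})$. Exponentiating then yields the two-sided bound in the statement with $C=\max\{C_0,C_1\}$; the case $s<0$ is immediate by reversing orientation and produces the same bound in $|s|$.

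Expanding $G=g+\varepsilon S$, the integrand of $\alpha^\varepsilon$ splits as
\[
g_\sigma(h^\varepsilon,\sigma^\varepsilon)\;+\;(g_\rho+\varepsilon S_\rho)\,h^\varepsilon_\sigma\;+\;\varepsilon S_\sigma.
\]
The last two summands are uniformly $O(\varepsilon)$ along the trajectory: Lemma \ref{lemmaestimatesh} gives $|h^\varepsilon_\sigma|_{C^0}\le \widetilde{C}\,\varepsilon$, and all relevant derivatives of $g$ and $S$ are bounded on the compact region where the trajectory stays (using $|h^\varepsilon|_{C^0}\le\widetilde{C}\,\varepsilon$ to confine $\rho$ near $0$). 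Their contribution to $\alpha^\varepsilon$ is therefore at most $C\,\varepsilon|s|$, which is already compatible with the target bound.

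The delicate piece is $\int_0^s g_\sigma(h^\varepsilon,\sigma^\varepsilon)\,dt$, for which a naive bound on $g_\sigma$ only yields $O(|s|)$. To bring out the factor $\varepsilon$ (modulo a bounded remainder), observe that $g(0,\cdot)$ is continuous, $2\pi$-periodic, and nonvanishing on $\Gamma$; after possibly shrinking $\widetilde{\mu}$, we may assume $g(0,\sigma)\ge c_0>0$ and therefore $G\ge c>0$ along every trajectory considered. Thus $t\mapsto u:=\sigma^\varepsilon(t,q)$ is a strictly monotone change of variables with $dt=du/G$, converting the integral into
\[
\int_q^{\sigma^\varepsilon(s,q)} \frac{g_\sigma(h^\varepsilon,u)}{G}\,du.
\]
A first-order Taylor expansion in the small quantities $h^\varepsilon$ and $\varepsilon S$ yields $g_\sigma(h^\varepsilon,u)/G = g_\sigma(0,u)/g(0,u) + O(\varepsilon)$ uniformly. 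The leading part is a total derivative $\frac{d}{du}\log g(0,u)$, so its integral telescopes to $\log g(0,\sigma^\varepsilon(s,q))-\log g(0,q)$, bounded in absolute value by $2\sup_{\sigma}|\log g(0,\sigma)|$ since $\log g(0,\cdot)$ is continuous and periodic. The $O(\varepsilon)$ remainder, integrated over an interval of length $|\sigma^\varepsilon(s,q)-q|\le(\sup|G|)|s|$, contributes $O(\varepsilon|s|)$.

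The main obstacle I anticipate is the bookkeeping needed to certify that all constants are genuinely independent of $\mu$ and $\varepsilon$. This requires verifying that $G\ge c>0$ along each flow segment (so the change of variables is valid), that the trajectory stays in a fixed compact neighborhood of $\Gamma$ (so data bounds are uniform), and that the $C^0$ and $C^1$ estimates on $h^\varepsilon$ furnished by Lemma \ref{lemmaestimatesh} apply for the chosen $\widetilde{\mu}$. Each of these follows from compactness of $\mathbb{S}^1$, continuity of the data near $\Gamma$, and a further shrinking of $\widetilde{\mu}$ if necessary. Combining the two steps produces the desired inequality $|\alpha^\varepsilon(s,q)|\le C_0+C_1\,\varepsilon|s|$.
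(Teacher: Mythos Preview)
Your argument is correct and, in fact, cleaner than the paper's. Both proofs proceed by showing that $\int_0^s(G_\rho h^\varepsilon_\sigma+G_\sigma)\,dt$ equals a uniformly bounded ``total derivative'' piece plus an $O(\varepsilon|s|)$ remainder; the difference lies in which total derivative is chosen. The paper uses the identity
\[
\frac{d}{ds}\ln|G| \;=\; G_\rho h^\varepsilon_\sigma + G_\sigma + \frac{G_\rho h^\varepsilon_\tau}{G} + \frac{G_\tau}{G},
\]
which telescopes the full integrand at once but introduces the term $G_\tau/G=\tfrac{\varepsilon}{\mu^\ell}S_\tau/G$. This carries the dangerous factor $\mu^{-\ell}$, and the paper needs a separate inductive identity (Lemma~\ref{lemmaidentity}, applied with $n=\ell$) to beat $\varepsilon^2/\mu^\ell$ down to $\varepsilon$. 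Your route instead peels off only the autonomous part: after the change of variables $u=\sigma^\varepsilon$, the leading term is $g_\sigma(0,u)/g(0,u)=\tfrac{d}{du}\log g(0,u)$, which has no $\tau$-dependence at all, so the $\mu^{-\ell}$ issue simply never arises. What the paper's approach buys is a single exact identity for the whole integrand; what yours buys is that Lemma~\ref{lemmaidentity} becomes unnecessary and the argument is shorter. The only points to make fully explicit when writing it up are the ones you already flag: that $G\ge c>0$ along the trajectory (so the change of variables is legitimate for all $s\in\mathbb{R}$, with $\sigma^\varepsilon$ lifted to $\mathbb{R}$), and that the Taylor remainder $g_\sigma(h^\varepsilon,u)/G-g_\sigma(0,u)/g(0,u)=O(\varepsilon)$ is uniform because $|h^\varepsilon|_{C^0}\le\widetilde{C}\varepsilon$ and all data are bounded on the fixed tube $|\rho|\le\widetilde{C}\widetilde{\mu}$.
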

\begin{proof}
	First, we shall prove that, if $\mu>0$ is sufficiently small, then the function $s \mapsto G(h^\varepsilon(\sigma^\varepsilon(s),\tau(s)),\sigma^\varepsilon(s),\tau(s),\mu,\varepsilon)$ has no zeros for all $s \in \mathbb{R}$, $q \in [0,2\pi]$, and all $\varepsilon \in [0,\bar{\varepsilon}_{\mu})$. In order to so, we remark that, by Lemma \ref{lemmaestimatesh}, there are $K_0>0$ and $\mu_0>0$ such that
\[
		|h^\varepsilon(\sigma^\varepsilon(s,q),\tau(s)) | \leq K_0\mu,
\]
	for all $s\in \mathbb{R}$, $q \in [0,2\pi]$, $\mu \in (0,\mu_0]$, and all $\varepsilon \in [0,\bar{\varepsilon}_{\mu})$. Let $I_0$  denote the interval $[-K_0\mu_0, \, K_0\mu_0]$. Observe that $h^\varepsilon(\sigma,\tau) \in I_0$ for all $(\sigma,\tau) \in \mathbb{R}^2$.  Since $g$ is Lipschitz continuous on $I_0 \times [0,2\pi]$, it follows that there is $L_0>0$ such that
\[
		|g(h^\varepsilon(\sigma^\varepsilon(s,q),\tau(s)),\sigma^\varepsilon(s,q)) - g(0,\sigma^\varepsilon(s,q))| \leq L_0 K_0 \mu,
\]
	for all $s \in \mathbb{R}$, $q \in [0,2\pi]$, $\mu \in (0,\mu_0]$, and all $\varepsilon \in [0,\bar{\varepsilon}_{\mu})$. Define 
	\begin{align*}
		K_S:= \sup \left\{|S(\rho, \sigma, t, \mu)|: (\rho,\sigma,t,\mu) \in I_0 \times \mathbb{R} \times \mathbb{R} \times [0,\mu_0]  \right\} < \infty.
	\end{align*}
	By the reverse triangle inequality, and considering that $0\leq\varepsilon<\bar{\varepsilon}_{\mu}\leq \mu$, we have
	\begin{align} \label{ineq:Gtriangle}
		|G(h^\varepsilon(\sigma,\tau),\sigma,\tau,\mu,\varepsilon)| \geq |g(0,\sigma)| -|g(h^\varepsilon(\sigma,\tau),\sigma) - g(0,\sigma)| - \mu |S(h^\varepsilon(\sigma,\tau),\sigma,\tau/\mu^\ell,\mu)|.
	\end{align}
	Since \eqref{systempolartruncated} is assumed to have a hyperbolic limit cycle that is the graph of a function of $\sigma$, it follows that there is $m>0$ such that $|g(0,\sigma))|>m$ for all $(\sigma,\tau) \in \mathbb{R}^2$. Define
	\begin{align*}
		\mu_1:=\min \left\{\mu_0, \frac{m}{4L_0K_0}, \frac{m}{4K_s} \right\}.
	\end{align*}
	Then, from (\ref{ineq:Gtriangle}), it follows that 
	\[
		|G(h^\varepsilon(\sigma^\varepsilon(s),\tau(s)),\sigma^\varepsilon(s),\tau(s),\mu,\varepsilon)|\geq  \frac{m}{2} >0,
\]
	for all $s\in \mathbb{R}$, $q \in [0,2\pi]$, $\mu \in (0,\mu_1]$, and all $\varepsilon \in [0,\bar{\varepsilon}_{\mu})$, as we set out to prove.
	
	Note that
	\begin{align} \label{eq: d/ds ln|G|}
		\frac{d}{ds} \ln{|G(h^\varepsilon(\sigma^\varepsilon(s,q),\tau(s)),\sigma^\varepsilon(s),\tau(s),\mu,\varepsilon)|} &= \frac{1}{G} (G_\rho h^\varepsilon_\sigma (\sigma^\varepsilon)' + G_\rho h^\varepsilon_\tau +G_\sigma (\sigma^\varepsilon)' + G_\tau) 
		\nonumber \\
		&= G_\rho h^\varepsilon_\sigma + G_\sigma + \frac{G_\rho h^\varepsilon_\tau}{G} + \frac{G_\tau}{G}.
	\end{align}
	where the arguments omitted in the expressions on the right-hand side of the equation are $(h^\varepsilon(\sigma^\varepsilon(s,q),\tau(s)),\sigma^\varepsilon(s,q),\tau(s),\mu,\varepsilon)$ for the function $G$ and its derivatives, $(\sigma^\varepsilon(s,q),\tau(s))$ for $h^\varepsilon$ and its derivatives, and $s$ for $(\sigma^\varepsilon)'$. Henceforth, whenever the arguments of those functions are omitted, they are to be understood as stated. By integrating (\ref{eq: d/ds ln|G|}) and rearranging the terms, we get:
	\begin{align}\label{eq:intGrhohsigma+Gsigma}
		\int_0^s (G_\rho h^\varepsilon_\sigma + G_\sigma) \, dt = \ln &\left|\frac{G(h^\varepsilon(\sigma^\varepsilon(s,q),\tau(s)),\sigma^\varepsilon(s,q),\tau(s),\mu,\varepsilon)}{G(h^\varepsilon(\sigma^\varepsilon(0),\tau(0)),\sigma^\varepsilon(0),\tau(0),\mu,\varepsilon)} \right|  \nonumber \\
		&-\int_0^s \frac{G_\rho h^\varepsilon_\tau}{G} \, dt -\int_0^s \frac{G_\tau}{G} \, dt.
	\end{align}
	We will provide an upper bound for the absolute value of each of the terms appearing on the right-hand side of (\ref{eq:intGrhohsigma+Gsigma}), so that we obtain an estimate for the left-hand side.
	
	In order to estimate the last term on the right-hand side of (\ref{eq:intGrhohsigma+Gsigma}), recall the definition of $G$, found in (\ref{definition:G}). We observe that, if $S$ is the function present in that definition, we have
	\begin{align} \label{eq:d/ds (S/G)}
		\frac{d}{ds} \left( \frac{S}{G} \right) = \; &\frac{S_\rho h^\varepsilon_\sigma (\sigma^\varepsilon)' +S_\rho h^\varepsilon_\tau +S_\sigma (\sigma^\varepsilon)' +\frac{1}{\mu^\ell}S_\tau}{G} \nonumber \\
		&-\frac{S(G_\rho h^\varepsilon_\sigma (\sigma^\varepsilon)' +G_\rho h^\varepsilon_\tau +G_\sigma (\sigma^\varepsilon)' +\frac{\varepsilon}{\mu^\ell}S_\tau) }{G^2},
	\end{align}
	where the argument of the function $S$ and its derivatives are given by the expression $(h^\varepsilon(\sigma^\varepsilon(s,q),\tau(s)),\sigma^\varepsilon(s,q),\tau(s)/\mu^\ell,\mu)$. We remark that $S_\tau$ is a symbol representing the partial derivative of $S$ with respect to its third entry. Since $G_\tau = \frac{\varepsilon}{\mu^\ell} S_\tau$ and $(\sigma^\varepsilon)'=G$, rearranging (\ref{eq:d/ds (S/G)}), we find that
	\begin{align} \label{eq:Gtau/G}
		\frac{G_\tau}{G} = \frac{\varepsilon}{\mu^\ell} \left(\frac{S_\tau}{G} \right) = \; &\varepsilon  \frac{S(G_\rho h^\varepsilon_\sigma G +G_\rho h^\varepsilon_\tau +G_\sigma G ) }{G^2} + \frac{\varepsilon^2}{\mu^\ell}\frac{ S S_\tau}{G^2} \nonumber \\
		&- \varepsilon  \frac{S_\rho h^\varepsilon_\sigma G +S_\rho h^\varepsilon_\tau +S_\sigma G}{G} +\varepsilon \, \frac{d}{ds} \left( \frac{S}{G} \right).
	\end{align}
	By integrating (\ref{eq:Gtau/G}), we obtain
	\begin{align} \label{eq:intGtau/G}
		\int_0^s \frac{G_\tau}{G} \, dt =\; &\varepsilon \int_0^s \frac{S G_\rho h^\varepsilon_\sigma G + S G_\rho h^\varepsilon_\tau + S G_\sigma G}{G^2} \, dt \nonumber \\
		& - \varepsilon \int_0^s \frac{S_\rho h^\varepsilon_\sigma G +S_\rho h^\varepsilon_\tau +S_\sigma G}{G} \, dt \nonumber\\
		& +\varepsilon   \frac{ \,S(h^\varepsilon(\sigma^\varepsilon(s,q),\tau(s)),\sigma^\varepsilon(s,q),\frac{\tau(s)}{\mu^\ell},\mu)}{G(h^\varepsilon(\sigma^\varepsilon(s,q),\tau(s)),\sigma^\varepsilon(s,q),\tau(s),\mu,\varepsilon)} \nonumber \\
		&- \varepsilon  \frac{ \, S(h^\varepsilon(\sigma^\varepsilon(0,q),\tau(0)),\sigma^\varepsilon(0,q),\frac{\tau(0)}{\mu^\ell},\mu)}{G(h^\varepsilon(\sigma^\varepsilon(0,q),\tau(0)),\sigma^\varepsilon(0,q),\tau(0),\mu,\varepsilon)} \nonumber \\
		&+ \frac{\varepsilon^2}{\mu^\ell} \int_0^s \frac{ S S_\tau}{G^2} \, dt.
	\end{align}
	
	Observe that by Lemma \ref{lemmaestimatesh}, there is $\mu_h>0$ such that the set $\mathcal{H}:=\{h^\varepsilon : \varepsilon \in [0,\bar{\varepsilon}_{\mu}), \; \mu \in (0,\mu_h]\, \}$ is uniformly bounded in the $C^0$ norm.  The sets $\mathcal{H}_\sigma:=\{h_\sigma^\varepsilon : \varepsilon \in [0,\bar{\varepsilon}_{\mu}), \; \mu \in (0,\mu_h]\, \}$ and $\mathcal{H}_\tau:=\{h_\tau^\varepsilon : \varepsilon \in [0,\bar{\varepsilon}_{\mu}), \; \mu \in (0,\mu_h]\, \}$, defined similarly, are also uniformly bounded. Therefore, there is $M>0$ such that $|h^\varepsilon|_{C^0}<M$, $|h^\varepsilon_\sigma|_{C^0}<M$, and $|h^\varepsilon_\tau|_{C^0}<M$ for all $\mu \in (0,\mu_h]$ and all $\varepsilon \in [0,\bar{\varepsilon}_{\mu})$. Moreover, $S$, $G$, and their derivatives are bounded on $[-M,M] \times \mathbb{R} \times \mathbb{R} \times [0,\mu_h].$
	
	Define $\mu_3:=\min\{\mu_2,\mu_h\}$. Considering that $G$ is also bounded from below by $m/2$ for $\mu \in (0,\mu_3]$, the integrands of the first and second integrals on the right-hand side of (\ref{eq:intGtau/G}) must be bounded for all $\mu \in (0,\mu_3]$, $\varepsilon \in [0,\bar{\varepsilon}_{\mu})$, $q \in [0,2\pi]$, and all $t \in \mathbb{R}$, so that there is $K_1>0$ such that
\[
		\left| \varepsilon \int_0^{s} \frac{S G_\rho h^\varepsilon_\sigma G + S G_\rho h^\varepsilon_\tau + S G_\sigma G}{G^2} \, dt \right| \leq \varepsilon \int_0^{|s|} \left| \frac{S G_\rho h^\varepsilon_\sigma G + S G_\rho h^\varepsilon_\tau + S G_\sigma G}{G^2} \right| \, dt \leq K_1 \varepsilon {|s|},
\]
	and
\[
		\left|\varepsilon \int_0^{s} \frac{S_\rho h^\varepsilon_\sigma G +S_\rho h^\varepsilon_\tau +S_\sigma G}{G}  \, dt \right|\leq \varepsilon \int_0^{|s|} \left| \frac{S_\rho h^\varepsilon_\sigma G +S_\rho h^\varepsilon_\tau +S_\sigma G}{G} \right| \, dt \leq K_1 \varepsilon{|s|},
\]
	for all $s\in \mathbb{R}$, $q \in [0,2\pi]$, $\mu \in (0,\mu_3]$, and all $[0,\bar{\varepsilon}_{\mu})$. Moreover, for the same reason, there is $K_2>0$ such that 
\[
		\varepsilon \left|\frac{ \,S(h^\varepsilon(\sigma^\varepsilon(s,q),\tau(s)),\sigma^\varepsilon(s,q),\frac{\tau(s)}{\mu^\ell},\mu)}{G(h^\varepsilon(\sigma^\varepsilon(s,q),\tau(s)),\sigma^\varepsilon(s,q),\tau(s),\mu,\varepsilon)} \right| \leq K_2,
\]
	and 
\[
		\varepsilon \left|\frac{ \, S(h^\varepsilon(\sigma^\varepsilon(0,q),\tau(0)),\sigma^\varepsilon(0,q),\frac{\tau(0)}{\mu^\ell},\mu)}{G(h^\varepsilon(\sigma^\varepsilon(0,q),\tau(0)),\sigma^\varepsilon(0,q),\tau(0),\mu,\varepsilon)}   \right| \leq K_2,
\]
	for all $s\in \mathbb{R}$, $q \in [0,2\pi]$, $\mu \in (0,\mu_3]$, and all $\varepsilon \in [0,\bar{\varepsilon}_{\mu})$. Then, apply Lemma \ref{lemmaidentity} with $n=\ell$ and integrate to obtain
	\begin{align}  \label{eq:lastterm}
		\frac{\varepsilon^2}{\mu^\ell} \int_0^s \frac{S S_\tau}{G^2} dt =& \sum_{k=2}^\ell \frac{\varepsilon^k}{k} \int_0^s \frac{d}{dt} \left( \frac{S^k}{G^k}\right) dt - (S_\rho h_\sigma^\varepsilon G + S_\rho h^\varepsilon_\tau+S_\sigma G) \sum_{k=2}^{\ell}\varepsilon^k\int_0^s\frac{ S^{k-1}}{G^k} dt\nonumber \\
		&+ (G_\rho h^\varepsilon_\sigma G + G_\rho h^\varepsilon_\tau + G_\sigma G) \sum_{k=2}^{\ell} \varepsilon^k \int_0^s\frac{S^k}{G^{k+1}} dt+ \frac{\varepsilon^{\ell+1}}{\mu^\ell} \int_0^s\frac{S^\ell S_\tau}{G^{\ell+1}} dt.
	\end{align}
	Similarly as above, the first term on right-hand side is bounded by a positive constant $K_3$. The second and third terms are bounded by $ K_4
	\varepsilon {|s|}$, where $K_4$ is also a positive constant. For the last term, because $\varepsilon\leq\mu$, there is $K_5>0$ such that
\[
		\frac{\varepsilon^{\ell+1}}{\mu^\ell} \int_0^{|s|}\left|\frac{S^\ell  S_\tau}{G^{\ell+1}}\right| dt \leq K_5 \varepsilon |s|,
\]
	for all $s\in \mathbb{R}$, $q \in [0,2\pi]$, $\mu \in (0,\mu_3]$, and all $[0,\bar{\varepsilon}_{\mu})$. Therefore, it follows from the triangle inequality applied to (\ref{eq:lastterm}) that 
	\[
		\frac{\varepsilon^2}{\mu^\ell} \left|\int_0^s \frac{S S_\tau}{G^2}\, dt\right| \leq K_3 +2K_4 \varepsilon |s| +K_5 \varepsilon |s|,
\]
	for all $s\in \mathbb{R}$, $q \in [0,2\pi]$, $\mu \in (0,\mu_3]$, and all $\varepsilon \in [0,\bar{\varepsilon}_{\mu})$.
	
	We can finally estimate the last term on the right-hand side of (\ref{eq:intGrhohsigma+Gsigma}). Define the positive constant $K=\max\{2K_1,2K_2,2K_3,2K_4,2K_5\}$, and an application of the triangle inequality to (\ref{eq:Gtau/G}) ensures that 
\[
		\left| \int_0^s \frac{G_\tau}{G} \, dt \right| \leq 2K_1\varepsilon |s| +2K_2 +K_3 +2K_4\varepsilon |s| +K_5 \varepsilon |s| \leq  K +K \varepsilon |s|, 
\]
	for all $s\in \mathbb{R}$, $q \in [0,2\pi]$, $\mu \in (0,\mu_1]$, and all $[0,\bar{\varepsilon}_{\mu})$.
	We remark that $K$ is independent of the choice of $\varepsilon$ and $\mu$, provided that $\mu \in (0,\mu_3]$. 
	
	We proceed now to establishing an upper bound for the first term on the right-hand side of (\ref{eq:intGrhohsigma+Gsigma}). It suffices to observe that, by continuity of the function $t \mapsto \ln|t|$, there is $K'>0$ such that
\[
		\ln \left|\frac{G(h^\varepsilon(\sigma^\varepsilon(s),\tau(s)),\sigma^\varepsilon(s),\tau(s),\mu,\varepsilon)}{G(h^\varepsilon(\sigma^\varepsilon(0),\tau(0)),\sigma^\varepsilon(0),\tau(0),\mu,\varepsilon)} \right| \leq K',
\]
	for all $s\in \mathbb{R}$, $q \in [0,2\pi]$, $\mu \in (0,\mu_3]$, and all $[0,\bar{\varepsilon}_{\mu})$. 
	
	Finally, in order to estimate the second term on the right-hand side of (\ref{eq:intGrhohsigma+Gsigma}), we remark that, by Lemma \ref{lemmaestimatesh}, there is $C_\tau>0$ such that $|h^\varepsilon_\tau|_{C^0}\leq C_\tau \varepsilon$ for all $\mu \in (0,\mu_3]$ and all $\varepsilon \in [0,\bar{\varepsilon}_{\mu})$. Thus, there is $K''>0$ such that
\[
		\left| \int_0^s \frac{G_\rho h^\varepsilon_\tau}{G} \, dt \right| \leq \int_0^{|s|}\left| \frac{G_\rho C_\tau}{G} \right| \varepsilon \, dt \leq K'' \varepsilon |s|,
\]
	for all $s \in \mathbb{R}$, $q \in [0,2\pi]$, $\mu \in (0,\mu_3]$, and all $[0,\bar{\varepsilon}_{\mu})$.
	
	Having estimated all the terms we set out to, we can finally estimate the left-hand side of (\ref{eq:intGrhohsigma+Gsigma}). In order to do so, define $C=\max \{K, K', K''\}$, and an application of the triangle inequality to (\ref{eq:intGrhohsigma+Gsigma}) guarantees that
	\[
		\left| \int_0^s (G_\rho h^\varepsilon_\sigma + G_\sigma) \, dt \right| \leq C + C\varepsilon |s| ,
\]
	for all $s \in \mathbb{R}$, $q \in [0,2\pi]$, $\mu \in (0,\mu_3]$, and all $[0,\bar{\varepsilon}_{\mu})$. By definition of $y^\varepsilon(s,q)$, we conclude that
	\begin{align*}
		e^{-C} e^{-C \varepsilon |s|} \leq y^\varepsilon (s,q) \leq e^{C} e^{C \varepsilon |s|},
	\end{align*}
	for all $s\in \mathbb{R}$, $q \in [0,2\pi]$, $\mu \in (0,\mu_3]$, and all $[0,\bar{\varepsilon}_{\mu})$, thus proving the lemma.
\end{proof}

Finally, we present the last estimates regarding $h^\varepsilon$ necessary to our proof, namely upper bounds for the  second order partial derivatives of $h^\varepsilon$, results that mirror  \cite[Lemma 8.5]{MR1740943}. We remark that the  original argument relies on the lower bound for $y^\varepsilon(s,q)$ that is presented in \cite[Lemma 8.4]{MR1740943}, and that such bound is here replaced by the similar Lemma \ref{lemmayepsilon} in order to cover all the values $\ell$ may assume. It should be noted that the following bounds depend on the choice of $\mu>0$.

\begin{lemma}[{\cite[Lemma 8.5]{MR1740943}}]\ \label{lemmahsigmasigma}
	Suppose system (\ref{systempolartruncated}) has an attracting hyperbolic limit cycle $\Gamma=\{(0,\sigma):\sigma\in\s^1\}$. There is $\widetilde{\mu}>0$ such that, for each $\mu \in (0,\widetilde{\mu}]$, there is $C^\mu>0$ satisfying $|h^\varepsilon_{\sigma \sigma}|_{C^0} \leq C^\mu $ for all $\varepsilon \in [0,\bar{\varepsilon}_{\mu})$.
\end{lemma}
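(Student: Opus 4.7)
The strategy is to derive a closed formula for $h^\varepsilon_{\sigma\sigma}$ along an orbit via the principal fundamental matrix $\Psi^\varepsilon$ and then run a contraction iteration over one return time, in the spirit of \cite[Lemma 8.5]{MR1740943}. Unlike the first-derivative estimate, the relevant contraction factor will come from the $(1,1)$-block of $\Psi^\varepsilon$ (which carries the attracting Floquet multiplier of $\Gamma$), while the two-sided bounds on $y^\varepsilon$ from Lemma \ref{lemmayepsilon} will be used to control the $y^\varepsilon$ powers appearing in the denominator.

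Starting from the identity
\[
	h^\varepsilon_\sigma(\sigma^\varepsilon(s,q),\tau(s)) = \frac{\psi^\varepsilon_{11}(s,q)\,w + \psi^\varepsilon_{12}(s,q)}{\psi^\varepsilon_{21}(s,q)\,w + \psi^\varepsilon_{22}(s,q)}, \qquad w := h^\varepsilon_\sigma(q,0),
\]
already derived in the proof of Lemma \ref{lemmaestimatesh}, I would differentiate once more with respect to $q$. Using $\sigma^\varepsilon_q = y^\varepsilon = \psi^\varepsilon_{21}w+\psi^\varepsilon_{22}$ and collecting the coefficient of $w_q = h^\varepsilon_{\sigma\sigma}(q,0)$, a routine quotient-rule computation yields
\[
	h^\varepsilon_{\sigma\sigma}(\sigma^\varepsilon(s,q),\tau(s)) = \frac{\Delta^\varepsilon(s,q)\, h^\varepsilon_{\sigma\sigma}(q,0) + N^\varepsilon(s,q)}{y^\varepsilon(s,q)^3},
\]
where $\Delta^\varepsilon := \psi^\varepsilon_{11}\psi^\varepsilon_{22}-\psi^\varepsilon_{12}\psi^\varepsilon_{21}$ is the determinant of the $(\rho,\sigma)$-block of $\Psi^\varepsilon$, and $N^\varepsilon$ is a polynomial expression in the $\psi^\varepsilon_{ij}$, their $q$-derivatives (which solve the second variational equation and are controlled by Gronwall), and $w$. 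At $\varepsilon=0$, the moving-frame construction of Section \ref{sec:ts} forces $g(0,\sigma)\equiv 1$ and $f(0,\sigma)\equiv 0$, hence $\psi^0_{22}\equiv 1$, $\psi^0_{12}\equiv 0$, and $\Delta^0(s,q) = \psi^0_{11}(s,q)$.

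Next, choose $T_\mu = n_\mu\, T\mu^\ell \in [T_0,T_0+1)$ as in the proof of Lemma \ref{lemmaestimatesh}, with $T_0$ large enough that $|\psi^0_{11}(T_\mu,q)| \leq \tfrac{1}{4}$ uniformly in $q$ (by attracting hyperbolicity of $\Gamma$). Given $p\in[0,2\pi]$, select $q^\varepsilon$ with $\sigma^\varepsilon(T_\mu,q^\varepsilon) = p$; since $\tau(T_\mu)\equiv 0$ modulo the perturbation period, the displayed formula evaluated at $s=T_\mu$ gives
\[
	h^\varepsilon_{\sigma\sigma}(p,0) = \frac{\Delta^\varepsilon(T_\mu,q^\varepsilon)\, h^\varepsilon_{\sigma\sigma}(q^\varepsilon,0) + N^\varepsilon(T_\mu,q^\varepsilon)}{y^\varepsilon(T_\mu,q^\varepsilon)^3}.
\]
Combining the Gronwall-type continuity estimate \eqref{ineq:psiepsilon-psi0} on $\Psi^\varepsilon$, the lower bound $y^\varepsilon(T_\mu,q)\geq e^{-C}e^{-C\varepsilon T_\mu}$ from Lemma \ref{lemmayepsilon}, and the smallness of order $\varepsilon$ of $h^\varepsilon$ and $h^\varepsilon_\sigma$ from Lemma \ref{lemmaestimatesh}, after possibly shrinking $\widetilde{\mu}$ we obtain a uniform $\lambda<1$ and a constant $K^\mu>0$ with
\[
	\left|\frac{\Delta^\varepsilon(T_\mu,q^\varepsilon)}{y^\varepsilon(T_\mu,q^\varepsilon)^3}\right| \leq \lambda < 1, \qquad \left|\frac{N^\varepsilon(T_\mu,q^\varepsilon)}{y^\varepsilon(T_\mu,q^\varepsilon)^3}\right| \leq K^\mu,
\]
for all $\mu\in(0,\widetilde{\mu}]$, $\varepsilon\in[0,\bar{\varepsilon}_\mu)$, and $p\in[0,2\pi]$. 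Taking the supremum over $p$ yields $|h^\varepsilon_{\sigma\sigma}|_{C^0} \leq \lambda\, |h^\varepsilon_{\sigma\sigma}|_{C^0} + K^\mu$, so $|h^\varepsilon_{\sigma\sigma}|_{C^0} \leq K^\mu/(1-\lambda) =: C^\mu$, as required.

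The main obstacle is establishing the uniform-in-$\varepsilon$ contraction $|\Delta^\varepsilon(T_\mu,q^\varepsilon)/y^\varepsilon(T_\mu,q^\varepsilon)^3| \leq \lambda < 1$: this is the step that genuinely uses both the attracting normal hyperbolicity of $\Gamma$ at $\varepsilon=0$ (which controls $\psi^0_{11}$ over the long return time $T_\mu$) and the continuous dependence of $\Psi^\varepsilon$, $y^\varepsilon$, and $h^\varepsilon_\sigma$ on $\varepsilon$, assembled in a way analogous to the bootstrap performed in the proof of Lemma \ref{lemmaestimatesh}. A secondary bookkeeping task is isolating precisely the $N^\varepsilon$ factor after the quotient-rule expansion and verifying that $\partial_q \psi^\varepsilon_{ij}$ is bounded on $[0,T_\mu]\times[0,2\pi]$ via the second variational equation.
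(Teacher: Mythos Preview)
Your strategy matches the paper's almost exactly: the paper also differentiates the Möbius-type identity for $h^\varepsilon_\sigma$ to obtain an expression for $h^\varepsilon_{\sigma\sigma}$ whose linear coefficient is $\det\Psi^\varepsilon/(\psi^\varepsilon_{21}h^\varepsilon_\sigma+\psi^\varepsilon_{22})^2$ times $\partial q/\partial p=1/y^\varepsilon$, which is precisely your $\Delta^\varepsilon/(y^\varepsilon)^3$; it then forces this coefficient below $27/32$ using Liouville's formula for $\det\Psi^0$, Lemma~\ref{lemmatecnico} for the denominator, and Lemma~\ref{lemmayepsilon} for $1/y^\varepsilon$, exactly as you outline. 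Your simplifications $g(0,\sigma)\equiv1$, $\psi^0_{22}\equiv1$, $\psi^0_{12}\equiv0$, $\Delta^0=\psi^0_{11}$ are correct in the moving frame of Section~\ref{sec:ts}, though the paper does not exploit them and works with a generic lower bound $M_0$ for $|\psi^0_{22}|$.

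There is, however, a genuine gap in your contraction step. Your orbit starts at $\tau=0$, and evaluating at $s=T_\mu$ (a multiple of the $\tau$-period) returns you to $\tau=0$; hence your displayed iteration only controls $\sup_{p}|h^\varepsilon_{\sigma\sigma}(p,0)|$, not $|h^\varepsilon_{\sigma\sigma}|_{C^0}$ over all $(\sigma,\tau)$. The jump from ``supremum over $p$'' to ``$|h^\varepsilon_{\sigma\sigma}|_{C^0}$'' is not justified. The paper closes this gap by parameterizing the initial time: it defines $\gamma^\varepsilon(s,q,w)$ starting at $(h^\varepsilon(q,w),q,w)$, writes $h^\varepsilon_{\sigma\sigma}(p,w)$ via the analogous formula with $\Psi^\varepsilon(T_\mu,q^{w,\varepsilon},w)$ and $y^\varepsilon(T_\mu,q^{w,\varepsilon},w)$, verifies that all the required bounds (on $\det\Psi^\varepsilon$, on $y^\varepsilon$, on $\Psi^\varepsilon_q$) hold uniformly in $w\in[0,T\mu^\ell]$, and only then takes the supremum over $(p,w)$. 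You should do the same; alternatively, having bounded $\sup_p|h^\varepsilon_{\sigma\sigma}(p,0)|$, you could propagate to general $\tau$ by a short-time argument in the spirit of Lemma~\ref{lemmareduction}, but either way an extra step is needed.
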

\begin{proof}
	We begin by setting $\mu_1$ and $C_1$ as being the quantities obtained in Lemma \ref{lemmaestimatesh}. Apply Lemma $\ref{lemmayepsilon}$ to obtain $\widetilde{\mu}>0$ and $C>0$ such that $e^{-C -C\varepsilon s} \leq y^\varepsilon(s,q) \leq e^{C+ C \varepsilon s}$ for all $s \geq 0$, $q \in [0,2\pi]$, $\mu \in (0,\widetilde{\mu}]$, and all $\varepsilon \in [0,\bar{\varepsilon}_{\mu})$. As in the proof of Lemma \ref{lemmaestimatesh}, we find $M_0>0$ such that $|\psi^0_{22}(s,q)|\geq M_0$ for all $s \in \mathbb{R}$ and all $q \in [0,2\pi]$. We remind the reader that $T>0$ is the period of the perturbation terms $R$ and $S$ with respect to their third entry, and that $\det \Psi^0(T,q)$ is, for any $q \in [0,2\pi]$, equal to the eigenvalue $\lambda_P<1$ of the derivative of the Poincaré map defined on any transversal section of $\Gamma$ evaluated at its fixed point.  We define
	\begin{align*}
		\mu_2:= \min\left\{\mu_1,\widetilde{\mu},\frac{-\ln\lambda_P}{2CT}\right\}.
	\end{align*}
	As was done in Lemma \ref{lemmaestimatesh}, we observe that function $I(t,q)$, defined in (\ref{def:I(t,q)}), has a maximum value $M_I>0$ on $[0,T] \times [0,2\pi]$. For any $t\geq0$, we may find $z \in \mathbb{N}$ and $r_t \in [0,T)$ such that $t= z T + r_t$. Hence, an application of Liouville's formula similar to (\ref{eq:liouvillesformula}) ensures that 
\[
		e^{C\mu_2 t} \det \Psi^0(t,q) \leq M_I e^{C \mu_2 T} \,(\det\Psi^0(T,q)e^{C\mu_2T})^z \leq M_I e^{C\mu_2 T} (\sqrt{\lambda_P})^z.
\]
	Thus, since $\lambda_P<1$, by taking $T_0>0$ sufficiently large, we can guarantee that
	\begin{align} \label{ineq:psi0T0}
		\det \Psi^0(t,q) \leq \frac{M_0^2}{4e^{2C+2CT\mu_2^{\ell+1}}} e^{-C\mu_2t} \leq \frac{M_0^2}{4} e^{-2C-2CT \mu_2^{\ell+1} -C\mu_2(T_0+1)},
	\end{align}
	for all $t \in [T_0,T_0+1)$ and all $q \in [0,2\pi]$. 
	
	Let $p \in [0,2\pi]$. For each $\mu \in (0,\mu_2]$, define $T_\mu \in [T_0,T_0+1)$ as in the proof of Lemma \ref{lemmaestimatesh}. Also, define s $\mapsto$ $\gamma^\varepsilon(s,q,w)=\big(h^\varepsilon(\sigma^\varepsilon(s,q,w),s+w),\sigma^\varepsilon(s,q,w),s+w\big)$ as the solution of $E^{\varepsilon,\mu}$ satisfying $\gamma^\varepsilon(0,q,w)=(h^\varepsilon(q,w),q,w)$, and let $q^{w,\varepsilon}$ be defined by $p=\sigma^\varepsilon(T_\mu,q^{w,\varepsilon},w)$. Finally, let $\Psi^\varepsilon(s,q,w)$ denote the principal fundamental matrix solution of $E^{\varepsilon,\mu}$ at $s=0$ along $\gamma^\varepsilon(s,q,w)$, and let $y^\varepsilon(s,q,w)$ be defined by
	\begin{align*}
		y^\varepsilon(s,q,w) := \exp \left( \int_0^s (G_\rho h^\varepsilon_\sigma + G_\sigma) \, dt\right),
	\end{align*}
	where the argument of the derivatives of $G$ is $\big(h^\varepsilon(\sigma^\varepsilon(t,q,w),t+w),\sigma^\varepsilon(t,q,w),t+w,\mu,\varepsilon\big)$ and the argument of $h_\sigma^\varepsilon$ is $(\sigma^\varepsilon(t,q,w),t+w)$. We remark that the change of variables $u(t)=t+w$ in the integral above guarantees that
\[
		y^\varepsilon(s,q,w) = \frac{y^\varepsilon(s+w,\widetilde{q})}{y^\varepsilon(w,\widetilde{q})},
\]
	where $\widetilde{q} = \sigma^\varepsilon(-w,q,w)$. Hence, an application of Lemma \ref{lemmayepsilon} ensures that 
\[
		|y^\varepsilon(s,q,w)| \geq e^{-2C-2C\varepsilon w} \, e^{-C \varepsilon s} \geq e^{-2C-2C T \mu_2^{\ell+1} } \, e^{-C\mu_2 s},
\]
	for all $s\geq0$, $\mu \in (0,\mu_2]$, $\varepsilon \in [0,\bar{\varepsilon}_{\mu})$, $q \in [0,2\pi]$, and all $w \in [0,T \mu^\ell]$.
	
	Observe that, by a similar argument as the one found in (\ref{def:v2epsilon}), for any $\mu \in (0,\mu_2]$ and any $\varepsilon \in [0,\bar{\varepsilon}_{\mu})$, the function
	\begin{align*}
		\begin{pmatrix}
			h^\varepsilon_\sigma(\sigma^\varepsilon(s,q,w),s+w) \, y^\varepsilon(s,q,w) \\
			y^\varepsilon(s,q,w) \\
			0
		\end{pmatrix} =
		\Psi^\varepsilon(s,q,w) \cdot 
		\begin{pmatrix}
			h_\sigma^\varepsilon(q,w) \\
			1 \\
			0
		\end{pmatrix}
	\end{align*}
	is a solution of the first variational equational of $E^{\varepsilon,\mu}$ along $\gamma^\varepsilon(s,q,w)$. Thus, since $h_\sigma$ is $T\mu^\ell$-periodic in its second argument, it follows that
	\begin{align*}
		h_\sigma^\varepsilon (p,w) = h_\sigma^\varepsilon (p,w+T_\mu) = \frac{\psi_{11}^\varepsilon(T_\mu,q^{w,\varepsilon},w) h_\sigma^\varepsilon(q^{w,\varepsilon},w)+\psi^\varepsilon_{12}(T_\mu,q^{w,\varepsilon},w)}{\psi_{21}(T_\mu,q^{w,\varepsilon},w) h_\sigma^\varepsilon(q^{w,\varepsilon},w)+\psi_{22}^\varepsilon(T_\mu,q^{w,\varepsilon},w) }.
	\end{align*}
	Differentiating both sides with respect to $p$, we obtain:
	\begin{align} \label{eq:hsigmasigmapw}
		h_{\sigma \sigma}^\varepsilon(p,w) &= \frac{1}{(\psi^\varepsilon_{21} h_\sigma^\varepsilon(q^{w,\varepsilon},w) + \psi^\varepsilon_{22})^2}\frac{\partial q^{w,\varepsilon}}{\partial p} \Big( \left(\psi_{21}^\varepsilon (\psi_{11}^\varepsilon)_q - \psi_{11}^\varepsilon (\psi_{21}^\varepsilon)_q\right) (h_\sigma^\varepsilon(q^{w,\varepsilon},w))^2 \nonumber \\ &+
		\left(\psi_{22}^\varepsilon(\psi_{11}^\varepsilon)_q - \psi_{12}^\varepsilon (\psi_{21}^\varepsilon)_q + \psi_{21}^\varepsilon(\psi_{12}^\varepsilon)_q - \psi_{11}^\varepsilon(\psi_{22}^\varepsilon)_q\right)h_\sigma^\varepsilon(q^{w,\varepsilon},w)\nonumber \\ &+(\psi_{11}^\varepsilon \psi_{22}^\varepsilon - \psi_{12}^\varepsilon \psi_{21}^\varepsilon )h_{\sigma \sigma}^\varepsilon(q^{w,\varepsilon},w) + \psi_{22}^\varepsilon(\psi_{12}^\varepsilon)_q - \psi_{12}^\varepsilon(\psi_{22}^\varepsilon)_q \Big),
	\end{align}
	where the subscript $\cdot_q$ indicates a derivative with respect to $q$, and the functions $\psi_{ij}^\varepsilon$ and their derivatives are evaluated at $(T_\mu,q^{w,\varepsilon},w)$. 
	
	By definition of $q^{w,\varepsilon}$, it follows that 
	\begin{align*}
		\frac{\partial q^{w,\varepsilon}}{\partial p} = \frac{1}{\sigma_q^\varepsilon(T_\mu,q^{w,\varepsilon},w)}.
	\end{align*}
	Moreover, since $\sigma_q(s,q,w)$ and $y^\varepsilon(s,q,w)$ are solutions of the same initial value problem, we obtain that
	\begin{align} \label{ineq:dqepsilon/dp}
		\left| \frac{\partial q^{w,\varepsilon}}{\partial p} \right| = \left| \frac{1}{\sigma_q^\varepsilon(T_\mu,q^{w,\varepsilon},w)} \right| = \left| \frac{1}{y^\varepsilon(T_\mu,q^{w,\varepsilon},w)} \right| \leq e^{2C+2C T \mu_2^{\ell+1} } \, e^{C\mu_2 T_\mu},
	\end{align}
	for all $\mu \in (0,\mu_2]$, $\varepsilon \in [0,\bar{\varepsilon}_{\mu})$, $p \in [0,2\pi]$, and all $w \in [0,T \mu^\ell]$. 
	
	Apply Lemma \ref{lemmatecnico} and obtain $r>0$ such that, if $t \in [T_0,T_0+1]$; $q,\xi \in \mathbb{R}$, and $D \in M_2(\mathbb{R})$ are such that $|\xi|<r$ and $|D-\Psi^0(t,q)|<r$, then
	\begin{align*}
		|d_{21} \xi +d_{22}| > \frac{2}{3} M_0, \qquad |\det D - \det \Psi^0(t,q)|< \frac{M_0^2}{8} e^{-2C-2CT \mu_2^{\ell+1} -C\mu_2(T_0+1)}.
	\end{align*}
	We remark that, since $F(\rho,\sigma,\tau,\mu,0)$ and $G(\rho,\sigma,\tau,\mu,0)$ do not depend on the choice of $\tau$, the first variational equations of $E^{\varepsilon,\mu}$ along $\gamma^0(s,q,w)$ and along $\gamma^0(s,q,0)=\gamma^0(s,q)$ are the same. Thus, $\Psi^0(s,q)=\Psi^0(s,q,w)$ for all $w \in \mathbb{R}$. 
	
	We proceed as in the proof of Lemma \ref{lemmaestimatesh} and observe that, since $T_\mu$ must be in the bounded set $[T_0,T_0+1)$, Lemma \ref{gronwalllemma} applied to $E^{\varepsilon,\mu}$ ensures that there are $C_2,C_3>0$ such that
	\begin{align} \label{ineq:gronwall3}
		|h^\varepsilon(q^{w,\varepsilon},w)| + |q^{w,\varepsilon}- q^{w,0}| \leq C_2 (|h^\varepsilon(p,w) |+\varepsilon)
	\end{align}
	and
	\begin{align}\label{ineq:gronwall4}
		|\Psi^\varepsilon(T_\mu,q^{w,\varepsilon},w) - \Psi^0(T_\mu,q^{w,0},w)| \leq C_3(	|h^\varepsilon(q^{w,\varepsilon},w)| + |q^{w,\varepsilon}- q^{w,0}|+\varepsilon),
	\end{align}
	for all $\mu \in (0,\mu_2]$, $\varepsilon \in [0,\bar{\varepsilon}_{\mu})$, $w \in [0,T \mu^\ell]$, and all $p \in [0,2\pi]$. But, by Lemma \ref{lemmaestimatesh}, we know that $|h^\varepsilon|_{C^0} \leq C_1 \varepsilon$ for all $\mu \in (0,\mu_1]$ and all $\varepsilon \in [0,\bar{\varepsilon}_{\mu})$. Hence, from (\ref{ineq:gronwall3}), we conclude that there is $C_4>0$ satisfying $|q^{w,\varepsilon}-q^{w,0}|<C_4\varepsilon$ for all $\mu \in (0,\mu_2]$, $\varepsilon \in [0,\bar{\varepsilon}_{\mu})$, $w \in [0,T \mu^\ell]$, and all $p \in [0,2\pi]$. Combining this with (\ref{ineq:gronwall4}) and the fact that $\Psi^0(T_\mu,q^{w,\varepsilon})=\Psi^0(T_\mu,q^{w,\varepsilon},w)$ for all $w \in \mathbb{R}$, it finally follows that $|\Psi^\varepsilon(T_\mu,q^{w,\varepsilon},w) - \Psi^0(T_\mu,q^0)| < C_3(C_1 + C_4 + 1)\varepsilon$ for all $\mu \in (0,\mu_2]$, $\varepsilon \in [0,\bar{\varepsilon}_{\mu})$, $w \in [0,T \mu^\ell]$, and all $p \in [0,2\pi]$. Therefore, by defining 
	\begin{align*}
		\mu_3 : = \min \left\{\mu_2, \frac{r}{2C_1}, \frac{r}{2C_3(C_1+ C_4+1)}\right\},
	\end{align*}
	we can ensure that $|\Psi^\varepsilon(T_\mu,q^{w,\varepsilon},w) - \Psi^0(T_\mu,q^0)| < r$ and $|h^\varepsilon|_{C^0}<r$ for all $\mu \in (0,\mu_3]$, $\varepsilon \in [0,\bar{\varepsilon}_{\mu})$, $w \in [0,T \mu^\ell]$, and all $p \in [0,2\pi]$. Thus, it follows that 
	\begin{align} \label{ineq:4/9M_0^2}
		|\psi_{21}^\varepsilon(T_\mu,q^{w,\varepsilon},w) h_\sigma^\varepsilon(q^{w,\varepsilon},w) + \psi_{22}^\varepsilon(T_\mu,q^{w,\varepsilon},w)|^2 > \frac{4}{9}M_0^2 
	\end{align}
	and 
	\begin{align} \label{ineq:psiepsilon-psi0(2)}
		|\det \Psi^\varepsilon(T_\mu,q^{w,\varepsilon},w) - \det \Psi^0(T_\mu,q^{w,\varepsilon})| <\frac{M_0^2}{8} e^{-2C-2CT \mu_2^{\ell+1} -C\mu_2(T_0+1)},
	\end{align}
	for all $\mu \in (0,\mu_2]$, $\varepsilon \in [0,\bar{\varepsilon}_{\mu})$, $w \in [0,T \mu^\ell]$, and all $p \in [0,2\pi]$.
	
	Considering (\ref{ineq:psi0T0}) and (\ref{ineq:psiepsilon-psi0(2)}), an application of the triangle inequality yields
	\begin{align}\label{ineq:detpsiepsilon}
		|\det \Psi^\varepsilon(T_\mu,q^{w,\varepsilon},w)| < \frac{3M_0^2}{8} e^{-2C-2CT \mu_2^{\ell+1} -C\mu_2(T_0+1)},
	\end{align}
	for all $\mu \in (0,\mu_2]$, $\varepsilon \in [0,\bar{\varepsilon}_{\mu})$, $w \in [0,T \mu^\ell]$, and all $p \in [0,2\pi]$. Furthermore, combining (\ref{ineq:dqepsilon/dp}), (\ref{ineq:4/9M_0^2}) and (\ref{ineq:detpsiepsilon}), we conclude that 
	\begin{align} \label{ineq:coeficientehsigmasigma}
		\frac{\left|\psi_{11}^\varepsilon(T_\mu,q^{w,\varepsilon},w)\psi_{22}^\varepsilon(T_\mu,q^{w,\varepsilon},w) - \psi_{12}^\varepsilon(T_\mu,q^{w,\varepsilon},w) \psi_{21}^\varepsilon(T_\mu,q^{w,\varepsilon},w) \right|}{\left|\psi^\varepsilon_{21}(T_\mu,q^{w,\varepsilon},w) h_\sigma^\varepsilon(q^\varepsilon,w) + \psi^\varepsilon_{22}(T_\mu,q^{w,\varepsilon},w)\right|^2}\left|\frac{\partial q^\varepsilon}{\partial p}\right| \leq \frac{27}{32},
	\end{align}
	for all $\mu \in (0,\mu_2]$, $\varepsilon \in [0,\bar{\varepsilon}_{\mu})$, $w \in [0,T \mu^\ell]$, and all $p \in [0,2\pi]$.
	
	For a given $\mu>0$, let $s \mapsto \varphi^\varepsilon(s,\rho,\sigma,\tau)$ be the solution of system $E^{\varepsilon,\mu}$ satisfying the equation $\varphi^\varepsilon(0,\rho,\sigma,\tau)=(\rho,\sigma,\tau)$. Define $\Phi^\varepsilon(s,\rho,\sigma,\tau)$ to be the principal fundamental matrix solution of $E^{\varepsilon,\mu}$ at $t=0$ along  $\varphi^\varepsilon(s,\rho,\sigma,\tau)$. By definition of $\Psi^\varepsilon$, it follows that $\Psi^\varepsilon(s,q,w) = \Phi^\varepsilon(s,h^\varepsilon(q,w),q,w)$. By differentiating this identity with respect to $q$, we have that
	\[
		\Psi^\varepsilon_q(s,q,w) = \, \Phi^\varepsilon_\rho(s,h^\varepsilon(q,w),q,w) \, h^\varepsilon_\sigma(q,w) +  \Phi^\varepsilon_\sigma(s,h^\varepsilon(q,w),q,w).
\]
	
	Henceforth in this proof, we consider $\mu \in (0,\mu_3]$ to be fixed. As a result, some of the bounds we will work with depend directly on the choice of $\mu$. A superscripted $\mu$ will indicate when that is the case. Then, $\Phi^\varepsilon$, $\Phi^\varepsilon_\rho$ and $\Phi^\varepsilon_\sigma$ are continuous $\varepsilon$-families of continuous functions that are furthermore periodic in their third and fourth argument. Moreover, the sets $\{h^\varepsilon: \varepsilon \in [0,\bar{\varepsilon}_{\mu})\}$ and $\{h^\varepsilon_\sigma: \varepsilon \in [0,\bar{\varepsilon}_{\mu})\}$ are uniformly bounded by a constant $M>0$. By continuity and periodicity, there is $C_5^\mu>0$ such that $|\Phi^\varepsilon(T_\mu,\rho,\sigma,\tau)|<C_5^\mu$, $|\Phi^\varepsilon_\rho(T_\mu,\rho,\sigma,\tau)|<C_5^\mu$, and $|\Phi^\varepsilon_\sigma(T_\mu,\rho,\sigma,\tau)|<C_5^\mu$ for all $\varepsilon \in [0,\bar{\varepsilon}_{\mu}]$ and all $(\rho,\sigma,\tau) \in [-M,M] \times \mathbb{R} \times \mathbb{R}$. Thus, it follows that
	\begin{align} \label{ineq:psiepsilontmuq}
		|\Psi^\varepsilon(T_\mu,q,w)| = |\Phi^\varepsilon(T_\mu,h^\varepsilon(q,w),q,w)| \leq C_5^\mu
	\end{align}
	and
	\begin{align} \label{ineq:psi_qepsilontmuq}
		|\Psi^\varepsilon_q(T_\mu,q,w)| \leq \, |\Phi^\varepsilon_\rho(T_\mu,h^\varepsilon(q,w),q,w)| \, |h^\varepsilon_\sigma(q,w)| +  |\Phi^\varepsilon_\sigma(T_\mu,h^\varepsilon(q,w),q,w)| \leq C_5^\mu(M+1),
	\end{align}
	for all $\varepsilon \in [0,\bar{\varepsilon}_{\mu})$, all $q \in [0,2\pi]$, and all $w \in[0,T \mu^\ell]$.
	
	Finally, to estimate $h_{\sigma \sigma}^\varepsilon$, we take the absolute value of (\ref{eq:hsigmasigmapw}), apply the triangle inequality on the right-hand side and take the supremum of the right-hand side over $q^{w,\varepsilon} \in [0,2\pi]$. Then, considering (\ref{ineq:coeficientehsigmasigma}), (\ref{ineq:psiepsilontmuq}), (\ref{ineq:psi_qepsilontmuq}), and the uniform boundedness of $h_\sigma^\varepsilon$, it follows that there is $C_6^\mu>0$ such that
	\[
		|h^\varepsilon_{\sigma \sigma}(p,w)| \leq \frac{27}{32}  \left(\sup_{q \in [0,2\pi]}|h^\varepsilon_{\sigma \sigma}(q,w)| \right) + C_6^\mu,
\]
	for all $\varepsilon \in [0,\bar{\varepsilon}_{\mu})$, all $w \in [0,T\mu^\ell]$, and all  $p \in [0,2\pi]$. By taking the supremum over $(p,w) \in [0,2\pi] \times [0,T \mu^\ell]$ and rearranging the terms, we conclude that 
	\[
		|h^\varepsilon_{\sigma \sigma}|_{C_0} \leq \frac{32}{5} C_6^\mu,
\]
	for all $\varepsilon \in [0,\bar{\varepsilon}_{\mu})$, finishing the proof.
\end{proof}

\begin{lemma} \label{lemmahsigmatauhtautau}
	Suppose system (\ref{systempolartruncated}) has an attracting hyperbolic limit cycle $\Gamma=\{(0,\sigma):\sigma\in\s^1\}$. There is $\widetilde{\mu}>0$ such that, for each $\mu \in (0,\widetilde{\mu}]$, there is $\widetilde{C}^\mu>0$ satisfying $|h^\varepsilon_{\sigma \tau}|_{C^0} \leq \widetilde{C}^\mu $ and $|h_{\tau \tau}^{\varepsilon}|_{C^0}\leq \widetilde{C}^\mu$ for all $\varepsilon \in [0,\bar{\varepsilon}_{\mu})$.
\end{lemma}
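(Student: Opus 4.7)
The plan is to obtain both bounds directly from the invariance identity \eqref{eq:identityhtau} which, writing $F$ and $G$ as in \eqref{definition:G} and evaluating all functions along the graph of $h^\varepsilon$, reads
\[
h^\varepsilon_\sigma \, G + h^\varepsilon_\tau = F.
\]
Since this identity holds for every $(\sigma,\tau)$, I would differentiate it with respect to $\sigma$ and with respect to $\tau$, producing algebraic expressions for $h^\varepsilon_{\sigma\tau}$ and $h^\varepsilon_{\tau\tau}$ in terms of quantities already controlled by the previous lemmas.

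For the first bound, differentiation with respect to $\sigma$ together with the chain rule yields
\[
h^\varepsilon_{\sigma\sigma} G + h^\varepsilon_\sigma (G_\rho h^\varepsilon_\sigma + G_\sigma) + h^\varepsilon_{\sigma\tau} = F_\rho h^\varepsilon_\sigma + F_\sigma,
\]
which I solve algebraically for $h^\varepsilon_{\sigma\tau}$. By Lemma \ref{lemmaestimatesh}, for $\mu$ small the quantities $|h^\varepsilon|$, $|h^\varepsilon_\sigma|$ and $|h^\varepsilon_\tau|$ are uniformly bounded, and since $h^\varepsilon$ stays in a compact set of the form $\{|\rho| \leq \widetilde{C}\mu\}$, the terms $F, F_\rho, F_\sigma, G, G_\rho, G_\sigma$ are bounded by constants depending only on $\mu$. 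Combined with Lemma \ref{lemmahsigmasigma}, which provides $|h^\varepsilon_{\sigma\sigma}|_{C^0} \leq C^\mu$, this immediately yields the desired bound on $|h^\varepsilon_{\sigma\tau}|_{C^0}$.

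For the second bound, differentiation of the invariance identity with respect to $\tau$ gives
\[
h^\varepsilon_{\sigma\tau} G + h^\varepsilon_\sigma (G_\rho h^\varepsilon_\tau + G_\tau) + h^\varepsilon_{\tau\tau} = F_\rho h^\varepsilon_\tau + F_\tau,
\]
from which I solve for $h^\varepsilon_{\tau\tau}$. The only delicate point is the control of $F_\tau$ and $G_\tau$: because the $\tau$-dependence of $R$ and $S$ enters only through $\tau/\mu^\ell$, these partial derivatives carry a prefactor $\varepsilon/\mu^\ell$, and since $\varepsilon \leq \bar{\varepsilon}_{\mu} \leq \mu$ we only have $\varepsilon/\mu^\ell \leq \mu^{1-\ell}$, a finite constant for each fixed $\mu$. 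This is precisely what forces the constant in the conclusion to be $\mu$-dependent and is the only real obstacle in the argument; beyond that, combining the bound on $h^\varepsilon_{\sigma\tau}$ just obtained with the $C^0$-estimates on $h^\varepsilon_\sigma$ and $h^\varepsilon_\tau$ from Lemma \ref{lemmaestimatesh} produces $|h^\varepsilon_{\tau\tau}|_{C^0} \leq \widetilde{C}^\mu$ for all $\varepsilon \in [0, \bar{\varepsilon}_{\mu})$, completing the proof.
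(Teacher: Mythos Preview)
Your proposal is correct and follows essentially the same approach as the paper: both differentiate the invariance identity \eqref{eq:identityhtau} in $\sigma$ and in $\tau$, then invoke Lemmas \ref{lemmaestimatesh} and \ref{lemmahsigmasigma} together with the observation that $F_\tau,G_\tau$ carry a factor $\varepsilon/\mu^\ell$. The only cosmetic difference is that the paper first rewrites the identity in terms of $H$ defined by $h^\varepsilon=\varepsilon H$ (equation \eqref{eq:invarianceH}) and bounds $H_{\sigma\tau},H_{\tau\tau}$ before multiplying back by $\varepsilon$, whereas you work directly with $h^\varepsilon$; your route is slightly more streamlined and loses nothing, since the conclusion only asks for a $\mu$-dependent bound.
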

\begin{proof}
	Let $\mu_1>0$ be sufficiently small to satisfy Lemmas \ref{lemmaestimatesh} and \ref{lemmahsigmasigma}. Let $C_1$ be the constant obtained in Lemma \ref{lemmaestimatesh} and, for each $\mu \in (0,\mu_1]$, let $C^\mu$ be the constant obtained in Lemma \ref{lemmahsigmasigma}. As before, we define $H$ by $h^\varepsilon(\sigma,\tau) = \varepsilon H(\sigma,\tau,\varepsilon)$. For convenience, we restate (\ref{eq:invarianceH}):
	\begin{align*} 
		\big(g(h^\varepsilon(\sigma,\tau),\sigma) + \varepsilon S(h^\varepsilon(\sigma,\tau),\sigma,\tau/\mu^\ell,\mu) \big) H_\sigma(\sigma,\tau,\varepsilon) + H_\tau(\sigma,\tau,\varepsilon) &=\nonumber \\
		R(h^\varepsilon(\sigma,\tau),\sigma,\tau/\mu^\ell,\mu) &+ \frac{1}{\varepsilon} \big(f(h^\varepsilon(\sigma,\tau),\sigma) \big), 
	\end{align*}
	
	Define the following functions:
	\begin{align*}
		&A_1(\sigma,\tau,\mu,\varepsilon) : = g_\rho \, h_\sigma^\varepsilon  + g_\sigma + \varepsilon S_\rho \, h_\sigma^\varepsilon + \varepsilon S_\sigma, \\
		&A_2(\sigma,\tau,\mu,\varepsilon) : = g + \varepsilon S, \\
		&A_3(\sigma,\tau,\mu,\varepsilon) : = R_\rho \, h_\sigma^\varepsilon + R_\sigma, \\
		&A_4(\sigma,\tau,\mu,\varepsilon) := f_\rho \, h^\varepsilon_{\sigma} + f_\sigma,
	\end{align*}
	where $f$, $g$, and their derivatives are to be evaluated at  $(h^\varepsilon(\sigma,\tau),\sigma)$; $h^\varepsilon$ and its derivatives are to be evaluated at $(\sigma,\tau)$; and $R$, $S$, and their derivatives are to be evaluated at $(h^\varepsilon(\sigma,\tau),\sigma,\tau/\mu^\ell,\mu)$.
	By differentiating both sides of (\ref{eq:invarianceH}) with respect to $\sigma$, it follows that
	\begin{align} \label{eq:Hsigmatau}
		A_1 H_\sigma + A_2 H_{\sigma \sigma} + H_{\tau \sigma} = A_3 + \frac{1}{\varepsilon} A_4,
	\end{align}
	where $A_1$, $A_2$, $A_3$, and $A_4$ are to be evaluated at $(\sigma,\tau,\mu,\varepsilon)$; and $H$ and its  derivatives are to be evaluated at $(\sigma,\tau,\varepsilon)$.
	
	Since $R$, $S$, and their derivatives are continuous and periodic in the second and third arguments, it follows that they are bounded on the set $[- C_1 \mu_1 , C_1 \mu_1] \times \mathbb{R} \times \mathbb{R} \times [0,\mu_1]$. Similarly, $f$, $g$, and their derivatives are bounded on $[ C_1 \mu_1 , C_1 \mu_1] \times \mathbb{R}$. In combination with the boundedness of $h^\varepsilon$ and its derivatives provided in Lemma \ref{lemmaestimatesh}, we conclude that there is $C_2>0$ such that $|A_i(\sigma,\tau,\mu,\varepsilon)|<C_2$ for all $\sigma \in \mathbb{R}$, $\tau \in \mathbb{R}$, $\mu \in (0,\mu_1]$, $\varepsilon \in [0,\bar{\varepsilon}_{\mu})$, and $i \in \{1,2,3,4\}$.
	
	By Lemma \ref{lemmaestimatesh} and the definition of $H$, it follows that $H_\sigma(\sigma,\tau,\varepsilon)\leq C_1$ for all $\sigma \in \mathbb{R}$, $\tau \in \mathbb{R}$, $\mu \in (0,\mu_1]$, and all $\varepsilon \in [0,\bar{\varepsilon}_{\mu})$. Furthermore, by Lemma \ref{lemmahsigmasigma}, it follows that 
	\[
		|H_{\sigma \sigma}(\sigma,\tau,\varepsilon)| \leq \frac{C^\mu} {\varepsilon}.
	\]
	An application of the triangle inequality to (\ref{eq:Hsigmatau}) in combination with all the boundedness results stated above ensures that
	\[
		|H_{\tau \sigma}(\sigma, \tau, \varepsilon)| \leq C_2 \left( 1+ C_1+ \frac{1+ C^\mu}{\varepsilon} \right),
\]	for all $\sigma \in \mathbb{R}$, $\tau \in \mathbb{R}$, $\mu \in (0,\mu_1]$, and all $\varepsilon \in [0,\bar{\varepsilon}_{\mu})$. Thus, there is $K_1^\mu>0$, dependent on $\mu$, satisfying $\varepsilon |H_{\tau \sigma}(\sigma,\tau,\varepsilon)| \leq K_1^\mu$ for all $\sigma \in \mathbb{R}$, $\tau \in \mathbb{R}$, $\mu \in (0,\mu_1]$, and all $\varepsilon \in [0,\bar{\varepsilon}_{\mu})$. It follows that
	\[
		|h^\varepsilon_{\tau \sigma}|_{C^0} \leq K_1^\mu,
\]
	for all $\mu \in (0,\mu_1]$ and all $\varepsilon \in [0,\bar{\varepsilon}_{\mu})$.
	
	In order to estimate $|h_{\tau \tau}|_{C^0}$, define the following functions:
	\begin{align*}
		&B_1(\sigma,\tau,\mu,\varepsilon) : = g_\rho \, h_\tau^\varepsilon + \varepsilon S_\rho \, h_\tau^\varepsilon + \frac{\varepsilon}{\mu^\ell} S_\tau, \\
		&B_2(\sigma,\tau,\mu,\varepsilon) : = R_\rho \, h_\tau^\varepsilon , \\
		&B_3(\sigma,\tau,\mu,\varepsilon) := f_\rho \, h^\varepsilon_{\tau},
	\end{align*}
	where $f$, $g$, and their derivatives are to be evaluated at  $(h^\varepsilon(\sigma,\tau),\sigma)$; $h^\e$ and its derivatives are to be evaluated at $(\sigma,\tau)$; and $R$, $S$, and their derivatives are to be evaluated at $(h^\varepsilon(\sigma,\tau),\sigma,\tau/\mu^\ell,\mu)$.
	By differentiating both sides of (\ref{eq:invarianceH}) with respect to $\tau$, it follows that
\[
		B_1 H_\sigma + A_2 H_{\sigma \tau} + H_{\tau \tau} = B_2 + \frac{1}{\varepsilon} B_3,
\]
	where $B_1$, $A_2$, $B_2$, and $B_3$ are to be evaluated at $(\sigma,\tau,\mu,\varepsilon)$; and $H$ and its  derivatives are to be evaluated at $(\sigma,\tau,\varepsilon)$. Hence, Lemma \ref{lemmaestimatesh}, combined with an analysis of periodicity and continuity as done above, ensures that there are $C_3>0$ and $C_4>0$ such that 
\[
	\begin{aligned}
		&|B_1(\sigma,\tau,\mu,\varepsilon)| \leq C_3 + \frac{C_4}{\mu^\ell}, \\
		&|B_2(\sigma,\tau,\mu,\varepsilon)| \leq C_3 + \frac{C_4}{\mu^\ell},
	\end{aligned}
	\]
	and 
	\[
	\begin{aligned}
		|B_3(\sigma,\tau,\mu,\varepsilon)| \leq C_3 \leq C_3 + \frac{C_4}{\mu^\ell},
	\end{aligned}
	\]
	for all $\sigma \in \mathbb{R}$, $\tau \in \mathbb{R}$, $\mu \in (0,\mu_1]$, and all $\varepsilon \in [0,\bar{\varepsilon}_{\mu})$. Thus, defining 
	\begin{align*}
		C_5^\mu := C_3 + \frac{C_4}{\mu^\ell},
	\end{align*}
	it follows, from an application of the triangle inequality combined with the upper bounds that we have established for $B_1$, $A_2$, $B_2$, $B_3$, $H_{\sigma}$, and $H_{\sigma \tau} = H_{\tau \sigma}$, that 
	\[
		|H_{\tau \tau}(\sigma, \tau ,\varepsilon)| \leq C_1 C_5^\mu  + C_2 \frac{K_1^\mu}{\varepsilon} + C_5^\mu + \frac{C_5^\mu}{\varepsilon},
	\]
	for all $\sigma \in \mathbb{R}$, $\tau \in \mathbb{R}$, $\mu \in (0,\mu_1]$, and all $\varepsilon \in [0,\bar{\varepsilon}_{\mu})$.  Thus, there is $K_2^\mu>0$, dependent on $\mu$, satisfying $\varepsilon |H_{\tau \tau}(\sigma,\tau,\varepsilon)| \leq K_2^\mu$ for all $\sigma \in \mathbb{R}$, $\tau \in \mathbb{R}$, $\mu \in (0,\mu_1]$, and all $\varepsilon \in [0,\bar{\varepsilon}_{\mu})$. It finally follows that
	\[
		|h^\varepsilon_{\tau \tau}|_{C^0} \leq K_2^\mu,
\]
	for all $\mu \in (0,\mu_1]$ and all $\varepsilon \in [0,\bar{\varepsilon}_{\mu})$. The proof is finished by defining $\widetilde{C}^\mu := \max\{K_1^\mu, K_2^\mu\}$.
\end{proof}

\subsection{Proof of Proposition \ref{prop:main}}\label{sec:proofmain}

Having established the previous estimates, we can finally proceed to the proof of Proposition \ref{prop:main} itself. The proof is divided in two lemmas. In the first lemma, the existence of $h^{\bar{\varepsilon}_{\mu}}$, a $C^1$ function of the angular variables whose graph is an invariant manifold of system $E^{\bar{\varepsilon}_{\mu},\mu}$, is ensured for $\mu>0$ sufficiently small. In the second lemma, $M(\bar{\varepsilon}_{\mu},\mu)$ is shown to be $r$-normally hyperbolic for $\mu>0$ sufficiently small.

Throughout this section, it is assumed that the hypothesis in the statement of Proposition \ref{prop:main} holds, to wit: system (\ref{systempolartruncated}) has an attracting hyperbolic limit cycle $\Gamma=\{(0,\sigma):\sigma\in\s^1\}$. As before, for each $\e\in A^{\mu}$, $h^{\varepsilon}$ denotes a $C^r$ function of the angular variables $(\sigma,\tau)$ whose graph is a $r$-normally hyperbolic invariant manifold of system $E^{\varepsilon,\mu}$; $\gamma^\varepsilon(s,q):=(h^\varepsilon(\sigma^\varepsilon(s,q),\tau(s)),\sigma^\varepsilon(s,q),\tau(s))$ denotes the solution of system $E^{\varepsilon,\mu}$ starting at $(h^\varepsilon(q,0),q,0)$; and $\bar{\varepsilon}_{\mu}$ denotes the least upper bound of the set $A^\mu$ defined in Section \ref{sec:mc}.
\begin{lemma} \label{lemmal1}
	There is $\mu_a>0$ such that, if $\mu \in (0,\mu_a]$, then there is a $C^1$ function $h^{\bar{\varepsilon}_{\mu}}$ of the angular variables whose graph is an invariant manifold of system $E^{\bar{\varepsilon}_{\mu},\mu}$. 
\end{lemma}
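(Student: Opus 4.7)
The plan is to choose $\mu_a>0$ small enough so that the three estimate Lemmas \ref{lemmaestimatesh}, \ref{lemmahsigmasigma}, and \ref{lemmahsigmatauhtautau} are simultaneously applicable and then to use a compactness argument to extract a limiting candidate $h^{\bar{\varepsilon}_\mu}$ as $\varepsilon\nearrow\bar{\varepsilon}_\mu$. More precisely, fix $\mu_a>0$ smaller than the three thresholds $\widetilde\mu$ produced in those lemmas. For each such $\mu$, Lemma \ref{lemmaestimatesh} gives a uniform $C^0$-bound on $h^\varepsilon$, $h_\sigma^\varepsilon$ and $h_\tau^\varepsilon$ over $\varepsilon\in[0,\bar{\varepsilon}_\mu)$, while Lemmas \ref{lemmahsigmasigma} and \ref{lemmahsigmatauhtautau} provide a (possibly $\mu$-dependent) uniform $C^0$-bound on $h_{\sigma\sigma}^\varepsilon$, $h_{\sigma\tau}^\varepsilon$, and $h_{\tau\tau}^\varepsilon$. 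Together, these estimates amount to a uniform $C^2$-bound on the family $\{h^\varepsilon:\varepsilon\in[0,\bar{\varepsilon}_\mu)\}$ viewed as functions on the compact two-torus of angular variables $(\sigma,\tau)\in[0,2\pi]\times[0,T\mu^\ell]$.

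Next, I would pick any sequence $\varepsilon_n\in A^\mu$ with $\varepsilon_n\nearrow\bar{\varepsilon}_\mu$. The uniform $C^2$-bound makes $\{h^{\varepsilon_n}\}$ and $\{dh^{\varepsilon_n}\}$ equicontinuous and uniformly bounded, so by the Arzel\`a–Ascoli theorem (applied to both the functions and their first derivatives) a subsequence, still denoted $\{h^{\varepsilon_n}\}$, converges in the $C^1$-topology to some $C^1$ function $h^{\bar{\varepsilon}_\mu}$ of the angular variables. I would then \emph{define} $h^{\bar{\varepsilon}_\mu}$ by this limit. Note that the $C^0$-estimate of Lemma \ref{lemmaestimatesh} persists in the limit, so the graph of $h^{\bar{\varepsilon}_\mu}$ is contained in the region where the vector field of $E^{\bar{\varepsilon}_\mu,\mu}$ is defined.

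To see that $\mathrm{graph}(h^{\bar{\varepsilon}_\mu})$ is invariant under $E^{\bar{\varepsilon}_\mu,\mu}$, I would pass to the limit in the invariance PDE. For each $n$, the invariance of $\mathrm{graph}(h^{\varepsilon_n})$ under $E^{\varepsilon_n,\mu}$ is equivalent to the first-order equation
\[
h_\tau^{\varepsilon_n}(\sigma,\tau)+h_\sigma^{\varepsilon_n}(\sigma,\tau)\bigl(g(h^{\varepsilon_n},\sigma)+\varepsilon_n S(h^{\varepsilon_n},\sigma,\tau/\mu^\ell,\mu)\bigr)=f(h^{\varepsilon_n},\sigma)+\varepsilon_n R(h^{\varepsilon_n},\sigma,\tau/\mu^\ell,\mu),
\]
which is precisely the identity \eqref{eq:identityhtau} derived in the proof of Lemma \ref{lemmaestimatesh}. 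Since $h^{\varepsilon_n}\to h^{\bar{\varepsilon}_\mu}$ in $C^1$ and $f,g,R,S$ are continuous in all their arguments, every term converges uniformly in $(\sigma,\tau)$, yielding the analogous identity for $h^{\bar{\varepsilon}_\mu}$ with $\varepsilon=\bar{\varepsilon}_\mu$. This identity is exactly the statement that $\mathrm{graph}(h^{\bar{\varepsilon}_\mu})$ is invariant under $E^{\bar{\varepsilon}_\mu,\mu}$.

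I expect the main technical point to be ensuring that the $C^2$-bounds of Lemmas \ref{lemmahsigmasigma}–\ref{lemmahsigmatauhtautau}, whose constants $C^\mu$ and $\widetilde C^\mu$ may blow up as $\mu\to 0$, hold uniformly in $\varepsilon\in[0,\bar{\varepsilon}_\mu)$ for a \emph{fixed} $\mu$; this is precisely the content of those lemmas, so extracting the $C^1$ limit is routine once they are invoked. The subtlety is that the limit is obtained only along a subsequence, so uniqueness of $h^{\bar{\varepsilon}_\mu}$ is not claimed here; it will be guaranteed a posteriori by the normal hyperbolicity argument of the second lemma in the section, which will also upgrade the regularity of $h^{\bar{\varepsilon}_\mu}$ to $C^r$ via Fenichel's theorem applied at $\bar{\varepsilon}_\mu$. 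For the present statement it is enough that the limit produced above is a $C^1$ graph invariant under $E^{\bar{\varepsilon}_\mu,\mu}$.
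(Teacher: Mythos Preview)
Your proposal is correct and follows essentially the same approach as the paper: choose $\mu_a$ below the thresholds of Lemmas \ref{lemmaestimatesh}, \ref{lemmahsigmasigma}, and \ref{lemmahsigmatauhtautau}, use the resulting uniform $C^2$-bound on $\{h^\varepsilon:\varepsilon\in[0,\bar{\varepsilon}_\mu)\}$ to extract a $C^1$-convergent subsequence via Arzel\`a--Ascoli (the paper packages this step as Lemma \ref{lemmaC2normperiodic}), and conclude invariance by passing to the limit. The only cosmetic difference is that you verify invariance by taking the limit in the invariance PDE \eqref{eq:identityhtau}, whereas the paper simply cites continuity of the flow with respect to $\varepsilon$; both arguments are equivalent and equally valid.
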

\begin{proof}
	We let $\mu_a>0$ be less than all the $\widetilde{\mu}>0$ obtained in Lemmas \ref{lemmaestimatesh}, \ref{lemmahsigmasigma}, and \ref{lemmahsigmatauhtautau}. By hypothesis, for each $\varepsilon \in [0,\bar{\varepsilon}_{\mu})$, system $E^{\varepsilon,\mu}$ has a $r$-normally hyperbolic invariant manifold that is graph of a $C^r$ function $h^\varepsilon$ of the angular variables $\sigma$ and $\tau$. Since $r\geq2$, we can define the family $S:=\{h^\varepsilon: 0\leq\varepsilon<\bar{\varepsilon}_{\mu}\}$ in the space of $C^2$ functions of the angular variables. By Lemmas \ref{lemmaestimatesh}, \ref{lemmahsigmasigma}, and \ref{lemmahsigmatauhtautau}, this family is uniformly bounded in the $C^2$ norm. Hence, $S$ is uniformly bounded and equicontinuous in the $C^1$ norm. Thus, an increasing sequence $(\varepsilon_i)_{i \in \mathbb{N}}$ whose limit is $\bar{\varepsilon}_{\mu}$ gives rise to a sequence of $C^2$ functions $(h^{\varepsilon_i})_{i \in \mathbb{N}}$ to which we can apply Lemma \ref{lemmaC2normperiodic} and extract a subsequence that converges to a $C^1$ function $h^{\bar{\varepsilon}_{\mu}}$.
	The fact that the graph of $h^{\bar{\varepsilon}_{\mu}}$ is invariant for $E^{\bar{\varepsilon}_{\mu},\mu}$ is a direct consequence of the continuity of the flow with respect to the parameter $\varepsilon$.
\end{proof}

\begin{remark}\label{mual}
Observe that the value $\mu_a$ obtained in Lemma \ref{lemmal1} satisfies the inequality $T\,\mu_a^{\ell}<1$. Indeed, from the proof of Lemma \ref{lemmal1}, $\mu_a$ is less than the $\widetilde{\mu}>0$ obtained in Lemma \ref{lemmaestimatesh}, which is less than $\mu_1$ given by \eqref{mu1}. In particular, $T \mu_a^{\ell}<1/2^{\ell}<1$.
\end{remark}

\begin{lemma}
	There is $\mu_b \in (0,\mu_a]$ such that the invariant manifold found in Lemma \ref{lemmal1}, that is given as the graph of the function $h^{\bar{\varepsilon}_{\mu}}$, is $r$-normally hyperbolic and attracting. Furthermore, the function $h^{\bar{\varepsilon}_{\mu}}$ is $C^r$.
\end{lemma}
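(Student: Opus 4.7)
The plan is to establish, for $\mu$ sufficiently small, that the $C^1$ invariant manifold $M(\bar{\varepsilon}_{\mu},\mu)$ produced by Lemma \ref{lemmal1} is $r$-normally hyperbolic and attracting, and then to invoke a classical regularity result for normally hyperbolic invariant manifolds in order to upgrade $h^{\bar{\varepsilon}_{\mu}}$ from $C^1$ to $C^r$. The normal-hyperbolicity step rests on the estimates assembled in Section \ref{sec:prel}; the smoothness step is an application of Fenichel's theorem along the continuation, exploiting that $\mu$ is fixed.

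To verify $r$-normal hyperbolicity I would linearize $E^{\bar{\varepsilon}_{\mu},\mu}$ along an orbit $\gamma^{\bar{\varepsilon}_{\mu}}(s,q)$ lying on the manifold and decompose the variational equation according to the splitting of $\R^3$ into the tangent plane of $M(\bar{\varepsilon}_{\mu},\mu)$, spanned by $(h_\sigma^{\bar{\varepsilon}_{\mu}},1,0)$ and $(h_\tau^{\bar{\varepsilon}_{\mu}},0,1)$, and a one-dimensional transversal fiber, which may be taken along $\partial_\rho$. Since $\tau'=1$, the induced linearization on the tangent bundle is essentially upper-triangular and the non-trivial tangential entry is driven by $y^{\bar{\varepsilon}_{\mu}}(s,q)$; the uniform bound of Lemma \ref{lemmayepsilon} then yields a tangential Lyapunov rate at most $C\bar{\varepsilon}_{\mu}\leq C\mu$. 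The transversal component is governed by a scalar linear ODE whose coefficient is the normal component of $F_\rho$ restricted to the manifold; at $\varepsilon=0$ this coefficient integrates over a period of $\Gamma$ to the negative Floquet exponent $-\lambda_0<0$ of the hyperbolic attracting limit cycle of the guiding system. The $C^0$-smallness of $h^{\bar{\varepsilon}_{\mu}}$ provided by Lemma \ref{lemmaestimatesh}, combined with a Gronwall-type continuity argument in the spirit of Lemma \ref{gronwalllemma}, then keeps the perturbed normal Floquet exponent below $-\lambda_0/2$ uniformly for $\mu$ small. Choosing $\mu_b$ small enough so that $\lambda_0/2>rC\mu_b$ yields a spectral gap of size at least $r$, establishing $r$-normal hyperbolicity; the sign of the normal exponent delivers attraction.

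For the $C^r$ regularity, the cleanest route is the classical invariant section theorem for normally hyperbolic invariant manifolds, which promotes the $C^1$ manifold $M(\bar{\varepsilon}_{\mu},\mu)$ to a $C^r$ manifold once $r$-normal hyperbolicity has been secured. Equivalently, using only the version of Fenichel's theorem stated earlier in this paper, one may pick an increasing sequence $\varepsilon_i \uparrow \bar{\varepsilon}_{\mu}$ and apply Fenichel with the $C^r$ manifold $M(\varepsilon_i,\mu)$ as the base and $E^{\bar{\varepsilon}_{\mu},\mu}$ as the perturbed vector field; since $\mu$ is fixed, the $C^r$ distance between $E^{\varepsilon_i,\mu}$ and $E^{\bar{\varepsilon}_{\mu},\mu}$ is $O(\bar{\varepsilon}_{\mu}-\varepsilon_i)$ and hence small for large $i$, and the resulting $C^r$ invariant manifold must coincide with $M(\bar{\varepsilon}_{\mu},\mu)$ by uniqueness in a $C^1$-neighborhood.

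The principal obstacle is securing the spectral gap with constants uniform in $\mu$. The perturbation term $\varepsilon R(\rho,\sigma,\tau/\mu^\ell,\mu)$ has $\tau$-derivatives of order $\varepsilon/\mu^\ell$, so it is \emph{not} small in any $C^k$ norm with $k\geq 1$ as $\mu\to 0$; this is precisely why one cannot short-circuit the whole argument by applying Fenichel directly against the unperturbed manifold $\Gamma\times\mathbb{S}^1$. The argument succeeds because the bound on $y^\varepsilon$ in Lemma \ref{lemmayepsilon} is uniform in $\mu$ and controls the tangential linearization without ever requiring differentiation of the fast-oscillating perturbation, so that the normal/tangential rate comparison survives the passage $\varepsilon\to\bar{\varepsilon}_{\mu}$.
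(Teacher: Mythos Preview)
Your proposal is essentially correct and follows the same architecture as the paper's proof: control the tangential linearization via the bound on $y^\varepsilon$ from Lemma~\ref{lemmayepsilon}, control the normal contraction by comparing to the unperturbed Floquet exponent of $\Gamma$ through a Gronwall-type perturbation argument, choose $\mu$ small so the gap exceeds $r$, and then upgrade regularity by a normally hyperbolic regularity theorem.

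The differences are largely in framing. The paper works explicitly in Fenichel's formalism of generalized Lyapunov type numbers $\nu^\varepsilon(q)$ and $\kappa^\varepsilon(q)$, builds an adapted metric in which $\{\mathcal{X}_1^\varepsilon,\mathcal{X}_2^\varepsilon,\eta^\varepsilon\}$ is orthonormal, and computes the normal operator $B^t_\varepsilon$ via an explicit projection formula; your informal ``transversal along $\partial_\rho$ governed by a scalar ODE'' is morally the same but glosses over the projection step (the transversal bundle is not invariant, so the normal equation is not literally scalar --- this is what the paper's $a^\varepsilon,b^\varepsilon$ bookkeeping handles). For the $C^r$ upgrade the paper invokes Fenichel's Theorem~6 together with Hirsch--Pugh--Shub~4.1.(f), which is precisely your ``classical invariant section theorem'' route. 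Your alternative route --- apply Fenichel with base $M(\varepsilon_i,\mu)$ for $\varepsilon_i\uparrow\bar\varepsilon_\mu$ --- also works, but only because you have already secured \emph{uniform} normal hyperbolicity constants along the continuation (otherwise the Fenichel neighborhoods could shrink to zero); the uniqueness claim at the end follows since attracting normally hyperbolic invariant manifolds are locally unique.
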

\begin{proof}
	Let $\mu_a>0$ be as given in Lemma \ref{lemmal1}, and $M(\bar{\varepsilon}_{\mu},\mu)$ be the invariant manifold of system $E^{\bar{\varepsilon}_{\mu},\mu}$ found in the same Lemma. Then, $M(\bar{\varepsilon}_{\mu},\mu)$ is the graph of the $C^1$ function $h^{\bar{\varepsilon}_{\mu}}$ of the angular variables. We remark that, by construction, $\mu_a$ is small enough to satisfy Lemmas \ref{lemmaestimatesh}, \ref{lemmahsigmasigma}, and \ref{lemmahsigmatauhtautau}. Moreover, by continuity, all the estimates found in those lemmas have their domain of validity extended to $\varepsilon \in [0,\bar{\varepsilon}_\mu]$.
	
	For each $\varepsilon \in [0, \bar{\varepsilon}_{\mu}]$, we define the vector fields 
	\begin{align*}
		\mathcal{X}_1^{\varepsilon}(\sigma,\tau) =
		\begin{pmatrix}
			F(h^{\varepsilon}(\sigma,\tau),\sigma,\tau,\mu,\varepsilon) \\
			G(h^{\varepsilon}(\sigma,\tau),\sigma,\tau,\mu,\varepsilon) \\
			1
		\end{pmatrix}\quad \text{and} \quad 
		\mathcal{X}_2^\varepsilon(\sigma,\tau) =
		\begin{pmatrix}
			h_\sigma^\varepsilon(\sigma,\tau) \\
			1 \\
			0
		\end{pmatrix},
	\end{align*}
	which are clearly tangent to the torus $M(\varepsilon,\mu)$. Furthermore, we remark that $v_1^\varepsilon(s,q)= \mathcal{X}_1(\sigma^\varepsilon(s,q),\tau(s))$ and $v_2^\varepsilon(s,q)=y^\varepsilon(s,q) \mathcal{X}_2(\sigma^\varepsilon(s,q),\tau(s))$ are solutions to the first variational equation associated to $E^{\varepsilon,\mu}$ along $\gamma^\varepsilon(s,q)$, that is, the differential system
	\begin{align} \label{firstvariationalsystem}
		v'  = Q^\varepsilon(s,q) \cdot v,\quad Q^\varepsilon(s,q): = \begin{pmatrix}
			F_\rho & F_\sigma & F_\tau \\
			G_\rho & G_\sigma & G_\tau \\
			0 & 0 & 0 
		\end{pmatrix},
	\end{align} 
	where the argument of each function in the matrix $Q^\varepsilon(s,q)$
	is 
	$$(h^\varepsilon(\sigma^\varepsilon(s,q),\tau(s)),\sigma^\varepsilon(s,q),\tau(s),\mu,\varepsilon).$$ As before, let us define $\Psi^\varepsilon(s,q)$ as the principal fundamental matrix solution of (\ref{firstvariationalsystem}) at $s=0$. Observe that, because of Lemma \ref{lemmal1}, $\Psi^\varepsilon$ is now well defined for $\varepsilon=\bar{\varepsilon}_{\mu}$. We also define $\eta^\varepsilon(\sigma,\tau):=R \cdot \mathcal{X}_2^\varepsilon(\sigma,\tau)$, where $R$ is the rotation matrix given by
\[
	\begin{aligned}
		R = 
		\begin{pmatrix}
			0 & -1 & 0 \\
			1 &0 &0 \\
			0 & 0 & 1
		\end{pmatrix}.
		\end{aligned}
\]
	Observe that, for any pair $(\sigma,\tau)$, the vectors $\eta^\varepsilon(\sigma,\tau)$ and $\mathcal{X}_2^\varepsilon(\sigma,\tau)$ are orthogonal and the set $\{\mathcal{X}_1^\varepsilon(\sigma,\tau),\mathcal{X}_2^\varepsilon(\sigma,\tau),\eta^\varepsilon(\sigma,\tau)\}$ is linearly independent.
	
	Since $v_2^\varepsilon(s,q)$ is a solution of (\ref{firstvariationalsystem}), it follows that 
	\[
		\Psi^\varepsilon(s,q) \cdot \mathcal{X}_2^\varepsilon(q,0) = y^\varepsilon(s,q) \mathcal{X}_2^\varepsilon(\sigma^\varepsilon(s,q),s),
\]	for all $s \in \mathbb{R}$ and all $q \in [0,2\pi]$. Furthermore, the invariance of the foliation $L=\{L_s\}_{s \in \mathbb{R}}$, where $L_s:=\{(\rho,\sigma,\tau): \tau=s \}$, ensures that we can find functions $a^\varepsilon(s,q)$ and $b^\varepsilon(s,q)$ such that 
	\begin{align} \label{eq:etaab}
			\Psi^\varepsilon(s,q) \cdot \eta^\varepsilon(q,0) =  y^\varepsilon(s,q) \big(a^\varepsilon(s,q) \mathcal{X}_2^\varepsilon(\sigma^\varepsilon(s,q),s) + b^\varepsilon(s,q) \eta^\varepsilon(\sigma^\varepsilon(s,q),s) \big).
	\end{align}
	In particular, by taking the inner product of (\ref{eq:etaab}) with $\eta^\varepsilon(\sigma^\varepsilon(s,q),s)$, it follows that
	\[
		b^\varepsilon(s,q) = \frac{\langle \Psi^\varepsilon(s,q) \cdot \eta^\varepsilon(q,0), \eta^\varepsilon(\sigma^\varepsilon(s,q),s) \rangle }{|\eta^\varepsilon(\sigma^\varepsilon(s,q),s)|^2}.
	\]
	
	Let $u_\alpha,u_\beta \in T_{(h^\varepsilon(\sigma,\tau),\sigma,\tau)}\R^3$. Then, there are unique $\alpha_i \in \mathbb{R}$ and $\beta_i \in \mathbb{R}$, $i\in\{1,2,3\}$, such that
	\[
	\begin{aligned}
		u_\alpha = \alpha_1 \mathcal{X}_1^\varepsilon(\sigma,\tau) + \alpha_2 \mathcal{X}_2^\varepsilon(\sigma,\tau) + \alpha_3 \eta^\varepsilon(\sigma,\tau), \nonumber \\
		u_\beta = \beta_1 \mathcal{X}_1^\varepsilon(\sigma,\tau) + \beta_2 \mathcal{X}_2^\varepsilon(\sigma,\tau) + \beta_3 \eta^\varepsilon(\sigma,\tau).
	\end{aligned}
	\]
	Define the metric $\langle \cdot, \cdot \rangle'$ on $T\mathbb{R}^3|_{M(\varepsilon,\mu)}$ by
\[
		\langle u_\alpha, u_\beta \rangle' = \alpha_1\beta_1 +\alpha_2\beta_2 +\alpha_3\beta_3.
\]
	Notice that, in this metric, the set $\{\mathcal{X}_1^\varepsilon(\sigma,\tau),\mathcal{X}_2^\varepsilon(\sigma,\tau),\eta^\varepsilon(\sigma,\tau)\}$ corresponds to an orthonormal basis of $T\mathbb{R}^3|_{M(\varepsilon,\mu)}$.
	Consider the splitting $T\mathbb{R}^3|_{M(\varepsilon,\mu)} := TM(\varepsilon,\mu) \oplus N(\varepsilon,\mu)$, where $N(\varepsilon,\mu)$ is the normal bundle of $M(\varepsilon,\mu)$ with respect to  $\langle \cdot, \cdot \rangle'$. Let $\pi_{\varepsilon,\mu}:T\mathbb{R}^3 \to N(\varepsilon,\mu)$ be the orthogonal projection on $N(\varepsilon,\mu)$ with respect to  $\langle \cdot, \cdot \rangle'$. Furthermore, let us denote by $\|\cdot\|$ the norm induced by $\langle \cdot, \cdot \rangle'$.
	
	Let us denote the flow of system $E^{\varepsilon,\mu}$ by $\phi^{\varepsilon,\mu}(t,x)$, where $\phi^{\varepsilon,\mu}(0,x)=x$. Following Fenichel in \cite{Fenichel1971}, define the operators $A^t_\varepsilon(q) = D\big(\phi^\varepsilon|_{M(\varepsilon,\mu)}\big)\big((h^\varepsilon(q,0),q,0),-t\big)$ on $TM(\varepsilon,\mu)$ and $B^t_\varepsilon(q)=\pi_{\varepsilon,\mu} \cdot  D\phi^{\varepsilon,\mu}\big((h^\varepsilon(\sigma^\varepsilon(-t,q),-t),\sigma^\varepsilon(-t,q),-t),t\big)$ on $N(\varepsilon,\mu)$ and the quantities
\[
	\begin{aligned}
		&\nu^\varepsilon(q):= \inf \left\{a>0 : \lim_{t \to \infty} \frac{\|B^t_\varepsilon(q)\|}{a^t} = 0\right\} = \limsup_{t \to \infty} \|B^t_\varepsilon(q)\|^{\frac{1}{t}}, \\
		&\kappa^\varepsilon(q):=\inf\left\{s>0: \lim_{t\to \infty}\|A^t_\varepsilon(q)\| \, \|B^t_\varepsilon(q)\|^s=0\right\}.
	\end{aligned}
\]
	We remark that, since the generalized Lyapunov type numbers are constant on orbits, in order to show that $M(\bar{\varepsilon}_{\mu},\mu)$ is $r$-normally hyperbolic, it suffices to show that $M(\bar{\varepsilon}_{\mu},\mu)$ is $C^r$, and that $\nu^{\bar{\varepsilon}_\mu}(q)<1$ and $\kappa^{\bar{\varepsilon}_\mu}(q)<1/r$ for all $q \in [0,2\pi]$.
	
	Let us begin with the following claim: $\nu^{\bar{\varepsilon}_\mu}(q)<1$ for all $q \in [0,2\pi]$. Observe that, since the unperturbed torus $M(0)$ is normally hyperbolic, Fenichel's Uniformity Lemma \cite{Fenichel1971} ensures that there are $C_0>0$ and $a<1$ such that 
	\begin{align} \label{ineq:c0a^t}
		\sup_{q \in [0,2\pi]} \|B^t_0(q)\|< C_0 \, a^t,
	\end{align}
	for all $t\geq0$. Therefore, there is $T_0>0$ such that
	\[
		\|B^t_0(q)\| <\frac{1}{4},
\]
	for all $t\geq T_0$ and all $q \in [0,2\pi]$. For each $\mu \in (0,\mu_a]$, define $n_\mu$ as the least positive integer satisfying $n_\mu T \mu^\ell \in [T_0,T_0+1)$, which exists because $T\mu^\ell_a<1$ by Remark \ref{mual}. Moreover, for each $\mu \in (0,\mu_a]$, let $T_\mu$ be defined as $T_\mu:=n_\mu T \mu^\ell$.
	
	Lemma \ref{gronwalllemma}, combined with Lemma \ref{lemmaestimatesh}, ensures that there is $C_1>0$ such that 
	\begin{align} \label{ineq:psiepsilonpsi0c1epsilon}
		|\Psi^\varepsilon(s,q) - \Psi^0(s,q)| < C_1 \varepsilon,
	\end{align}
	for all $s \in [T_0,T_0+1]$, $q \in [0,2\pi]$, $\mu \in (0,\mu_a]$, and all $\varepsilon \in [0,\bar{\varepsilon}_\mu]$. Moreover, by continuity, there is $C_2>0$ such that 
	\begin{align} \label{ineq:psi^0c2}
		|\Psi^0(s,q)|<C_2,
	\end{align}
	for all $s \in [T_0,T_0+1]$, $q \in [0,2\pi]$, and $\mu \in (0,\mu_a]$. Since $R$ is an isometry and applying Lemma \ref{lemmaestimatesh} once again, it follows that there is $C_3>0$ such that
	\begin{align} \label{ineq:etaepsilon-eta0}
		|\eta^\varepsilon(\sigma^\varepsilon(s,q),s) - \eta^0(\sigma^\varepsilon(s,q),s)| = |\mathcal{X}_2^\varepsilon(\sigma^\varepsilon(s,q),s) - \mathcal{X}_2^0(\sigma^\varepsilon(s,q),s)| < C_3 \varepsilon,
	\end{align}
	for all $s \in \mathbb{R}$, $q \in [0,2\pi]$, $\mu \in (0,\mu_a]$, and all $\varepsilon \in [0,\bar{\varepsilon}_\mu]$. 
	
	 The boundedness of $\eta^0$ over $(\sigma,\tau) \in \mathbb{R}^2$ combined with inequality (\ref{ineq:etaepsilon-eta0}) guarantees the existence of $C_4>0$ satisfying
	\begin{align} \label{ineq:nepsilonconstant}
		\frac{1}{C_4} < |\eta^0(\sigma,\tau)| - C_3\varepsilon< |\eta^\varepsilon(\sigma,\tau)| < |\eta^0(\sigma,\tau)| + C_3\varepsilon < C_4,
	\end{align}
	for all $\sigma \in \mathbb{R}$, $\tau \in \mathbb{R}$, $\mu \in (0,\mu_a]$, and all $\varepsilon \in [0,\bar{\varepsilon}_\mu]$. Taking squares of (\ref{ineq:nepsilonconstant}), we obtain $C_5>0$ such that
	\begin{align} \label{ineq:nepsilonsquared}
		 |\eta^0(\sigma,\tau)|^2 - C_5\varepsilon< |\eta^\varepsilon(\sigma,\tau)|^2 < |\eta^0(\sigma,\tau)|^2 + C_5\varepsilon,
	\end{align}
	for all $\sigma \in \mathbb{R}$, $\tau \in \mathbb{R}$, $\mu \in (0,\mu_a]$, and all $\varepsilon \in [0,\bar{\varepsilon}_\mu]$. Thus, combining (\ref{ineq:nepsilonconstant}) and (\ref{ineq:nepsilonsquared}), we obtain
	\begin{align} \label{ineq:1overeta}
		\left|\frac{1}{|\eta^\varepsilon(\sigma^\varepsilon(s,q),s)|^2} - \frac{1}{|\eta^0(\sigma^\varepsilon(s,q),s)|^2} \right| \leq \frac{C_5 \varepsilon}{C_4^4},
	\end{align}
	for all $s \in \mathbb{R}$, $q \in [0,2\pi]$, and $\mu \in (0,\mu_a]$. Moreover, by continuity, there is $C_6>0$ such that 
	\begin{align} \label{ineq:psi0eta0eta0}
		|\langle \Psi^0(s,q) \cdot \eta^0(q,0), \eta^0(\sigma^\varepsilon(s,q),s)\rangle | \leq C_6,
	\end{align}
	for all $s \in [T_0,T_0+1]$, $q \in [0,2\pi]$, and $\mu \in (0,\mu_a]$.
	
	Observe that $\eta^0$ is constant. This ensures that $\eta^0(\sigma^0(s,q),s) = \eta^0(\sigma^\varepsilon(s,q),s)$, so that
	\[
	\begin{aligned}
		|y^\varepsilon(s,q) b^\varepsilon(s,q) - &y^0(s,q)b^0(s,q)| = \nonumber \\ &\frac{\langle \Psi^\varepsilon(s,q) \cdot \eta^\varepsilon(q,0), \eta^\varepsilon(\sigma^\varepsilon(s,q),s) \rangle }{|\eta^\varepsilon(\sigma^\varepsilon(s,q),s)|^2} -\frac{\langle \Psi^0(s,q) \cdot \eta^0(q,0), \eta^0(\sigma^\varepsilon(s,q),s) \rangle }{|\eta^0(\sigma^\varepsilon(s,q),s)|^2}.
	\end{aligned}
	\]
	By applying the triangle inequality, we can ensure that 
	\begin{align}\label{difyep}
		|y^\varepsilon(s,q) &b^\varepsilon(s,q) - y^0(s,q)b^0(s,q)| \leq \nonumber \\
		&\frac{|\langle \Psi^\varepsilon(s,q) \cdot \eta^\varepsilon(q,0), \eta^\varepsilon(\sigma^\varepsilon(s,q),s) \rangle - \langle \Psi^0(s,q) \cdot \eta^0(q,0), \eta^0(\sigma^\varepsilon(s,q),s) \rangle|} {|\eta^\varepsilon(\sigma^\varepsilon(s,q),s)|^2}  \nonumber \\
		&+ \left|\frac{1}{|\eta^\varepsilon(\sigma^\varepsilon(s,q),s)|^2} - \frac{1}{|\eta^0(\sigma^\varepsilon(s,q),s)|^2} \right| 	|\langle \Psi^0(s,q) \cdot \eta^0(q,0), \eta^0(\sigma^\varepsilon(s,q),s)|,
	\end{align}
	for all $s \in \mathbb{R}$, $q \in [0,2\pi]$, $\mu \in (0,\mu_a]$, and $\varepsilon \in [0,\bar{\varepsilon}_\mu]$.
	
	On one hand, it follows from (\ref{ineq:1overeta}) and (\ref{ineq:psi0eta0eta0}) that 
	\begin{align} \label{ineq:secondmember}
		\left|\frac{1}{|\eta^\varepsilon(\sigma^\varepsilon(s,q),s)|^2} - \frac{1}{|\eta^0(\sigma^\varepsilon(s,q),s)|^2} \right| 	|\langle \Psi^0(s,q) \cdot \eta^0(q,0), \eta^0(\sigma^\varepsilon(s,q),s)| \leq \frac{C_5 C_6\varepsilon}{C_4^4},
	\end{align}
	for all $s \in [T_0,T_0+1]$, $q \in [0,2\pi]$, $\mu \in (0,\mu_a]$, and $\varepsilon \in [0,\bar{\varepsilon}_\mu]$. On the other hand, an application of the triangle inequality, combined with (\ref{ineq:psiepsilonpsi0c1epsilon}),(\ref{ineq:psi^0c2}), (\ref{ineq:etaepsilon-eta0}), and (\ref{ineq:nepsilonconstant}) ensures that there is $C_7>0$ such that
	\begin{align} \label{ineq:firstmember}
		\frac{|\langle \Psi^\varepsilon(s,q) \cdot \eta^\varepsilon(q,0), \eta^\varepsilon(\sigma^\varepsilon(s,q),s) \rangle - \langle \Psi^0(s,q) \cdot \eta^0(q,0), \eta^0(\sigma^\varepsilon(s,q),s) \rangle|} {|\eta^\varepsilon(\sigma^\varepsilon(s,q),s)|^2} \leq \nonumber \\
		\frac{|\Psi^\varepsilon(s,q) - \Psi^0(s,q)| |\eta^\varepsilon(q,0)|}{|\eta^\varepsilon(\sigma^\varepsilon(s,q),s)|} + \frac{|\Psi^0(s,q)| |\eta^\varepsilon(q,0) - \eta^0(q,0)|}{|\eta^\varepsilon(\sigma^\varepsilon(s,q),s)|}\quad \;  \nonumber \\+ \frac{|\Psi^0(s,q)| |\eta^0(q,0)| |\eta^\varepsilon(\sigma^\varepsilon(s,q),s) - \eta^0(\sigma^\varepsilon(s,q),s)| }{|\eta^\varepsilon(\sigma^\varepsilon(s,q),s)|^2} < C_7 \varepsilon,
	\end{align}
	for all $s \in[T_0,T_0+1]$, $q \in [0,2\pi]$, $\mu \in (0,\mu_a]$, and all $\varepsilon \in [0,\bar{\varepsilon}_\mu]$. Therefore, by substituting (\ref{ineq:secondmember}) and (\ref{ineq:firstmember}) into \eqref{difyep}, it follows that there is $C_8>0$ such that
	\begin{align} \label{ineq:y^epsilonb^epsilon-y^0b^0}
		|y^\varepsilon(s,q) &b^\varepsilon(s,q) - y^0(s,q)b^0(s,q)| < C_8 \varepsilon,
	\end{align}
	for all $s \in[T_0,T_0+1]$, $q \in [0,2\pi]$, $\mu \in (0,\mu_a]$, and all $\varepsilon \in [0,\bar{\varepsilon}_\mu]$.
	
	Observe that, by definition, we know that $\|B^t_\varepsilon(q)\| = |y^\varepsilon(t,\sigma^\varepsilon(-t,q)) b^\varepsilon(t,\sigma^\varepsilon(-t,q))|$. Thus, it follows from (\ref{ineq:y^epsilonb^epsilon-y^0b^0}) that $\|B^t_\varepsilon(q)-B^t_0(q)\|<C_8\varepsilon$ for all $t \in [T_0,T_0+1]$, $q \in [0,2\pi]$, $\mu \in (0,\mu_a]$, and all $\varepsilon \in [0,\bar{\varepsilon}_\mu]$. Therefore, by defining 
	\[
		\mu_1:=\min \left\{\mu_a,\frac{1}{4C_8}\right\},
\]
	it follows that 
\[
		\|B^t_\varepsilon(q)\| < \|B^t_0(q)\| + \frac{1}{4} <\frac{1}{2},
\]
	for all $s \in[T_0,T_0+1]$, $q \in [0,2\pi]$, $\mu \in (0,\mu_a]$, and all $\varepsilon \in [0,\bar{\varepsilon}_\mu]$. In particular, we have that
	\begin{align}\label{ineq:BTmuepsilon}
		\left\|B^{T_\mu}_{\bar{\varepsilon}_\mu}(q)\right\| < \frac{1}{2},
	\end{align}
	for all $q \in [0,2\pi]$ and all $\mu \in (0,\mu_1]$. Furthermore, for $\mu \in (0,\mu_1]$ fixed, the function $(s,q) \mapsto B^s_{\bar{\varepsilon}_\mu}(q)$ is continuous on the compact set $[0,T_0+1] \times [0,2\pi]$, so that there is $C_9>0$ satisfying 
	\begin{align} \label{ineq:Bsconstant}
		\left\|B^s_{\bar{\varepsilon}_\mu}(q)\right\| < C_9,
	\end{align}
	for all $s \in [0,T_0+1]$ and all $q \in [0,2\pi]$.
	
	For each $t\geq0$, let $k_\mu \in \mathbb{N}$ and $r_\mu \in [0,T_\mu)$ be such that $t = k_\mu T_\mu  +r_\mu$. Let $q_0 \in [0,2\pi]$ be given and define, for each $i \in \mathbb{N}$, the angle $q_i:=\sigma^\varepsilon(-iT_\mu,q_0)$. Then, by definition of $B_\varepsilon^t$, it follows that 
	\begin{align}\label{eq:Bproduct}
		B^{k_\mu T_\mu + r_\mu}_{\bar{\varepsilon}_\mu}(q_0) = B^{T_\mu}_{\bar{\varepsilon}_\mu}(q_0) \cdot B^{T_\mu}_{\bar{\varepsilon}_\mu}(q_1)  \cdots B^{T_\mu}_{\bar{\varepsilon}_\mu}(q_{k_\mu -1}) \cdot B^{r_\mu}_{\bar{\varepsilon}_\mu}(q_{k_\mu})
	\end{align}
	so that (\ref{ineq:BTmuepsilon}) and (\ref{ineq:Bsconstant}) guarantee that
	\[
		\left\|B^{k_\mu T_\mu + r_\mu}_{\bar{\varepsilon}_\mu}(q_0)\right\|^{\frac{1}{k_\mu T_\mu + r_\mu}} < C_9^{\frac{r_\mu}{k_\mu T_\mu + r_\mu}} \,  \left(\frac{1}{2}\right)^\frac{k_\mu}{k_\mu T_\mu + r_\mu},
\]
	for all $t\geq0$, $q_0 \in [0,2\pi]$, and $\mu \in (0,\mu_1]$. Therefore, we conclude that 
	\[
		\nu^{\bar{\varepsilon}_\mu}(q_0)=\limsup_{t \to \infty} \|B^t_{\bar{\varepsilon}_\mu}(q_0)\|^{\frac{1}{t}} \leq \limsup_{k_\mu \to \infty} \left\|B^{k_\mu T_\mu + r_\mu}_{\bar{\varepsilon}_\mu}(q_0)\right\|^{\frac{1}{k_\mu T_\mu + r_\mu}} \leq \left(\frac{1}{2}\right)^{\frac{1}{T_\mu}} < 1,
\]
	for all $q_0 \in [0,2\pi]$ and all $\mu \in (0,\mu_1]$. This concludes the proof of the claim.
	
	We proceed to proving the following claim: $\kappa^{\bar{\varepsilon}_\mu}(q)<1/r$ for all $q \in [0,2\pi]$. In order to do so, let $C$ be as given in Lemma \ref{lemmayepsilon}. We remind the reader that $r\geq2$ is the smoothness class of the functions appearing in $E^{\varepsilon,\mu}$. Define
	\[
		\mu_2 : = \min \left\{\mu_1, \left(\frac{-\ln a}{4C(r+1)}\right) \right\}.
\]
	By (\ref{ineq:c0a^t}), it follows that
	\[
		\sup_{q \in [0,2\pi]} e^{2C\mu_2 (r+1)(t+1)} \|B^t_0(q)\| < C_0 e^{2C\mu_2(r+1)} \left(e^{2C\mu_2(r+1)} a\right)^t < C_0 e^{2C\mu_2(r+1)} \left(\sqrt{a}\right)^t,
\]
	for all $t\geq0$. Since $\sqrt{a}<1$, there is $T_1>0$ such that 
	\begin{align} \label{ineq:supB0}
		\sup_{q \in [0,2\pi]} \|B^t_0(q)\| < \frac{1}{4} e^{-2C\mu_2(r+1)(t+1)} \leq \frac{1}{4} e^{-2C\mu_2(r+1)(T_1+1)},
	\end{align}
	for all $t\geq T_1$.
	
	For each $\mu \in (0,\mu_2]$, define $\tilde{n}_\mu \in \mathbb{N}$ as the least positive integer such that $\tilde{n}_\mu T \mu^\ell \in [T_1,T_1+1)$, which exists because $T\mu^\ell_a<1$ by Remark \ref{mual}. Define furthermore $\tilde{T}_\mu := \tilde{n}_\mu T \mu^\ell$. Proceeding as before, we can find $C_{10}>0$ such that 
	\begin{align}\label{ineq:BepsilonB0C10}
		\|B^t_\varepsilon(q)-B^t_0(q)\|<C_{10} \varepsilon,
	\end{align}
	for all $t \in [T_1,T_1+1]$, $q \in [0,2\pi]$, $\mu \in (0,\mu_2]$, and $\varepsilon \in [0,\bar{\varepsilon}_\mu]$.
	
	Define 
	\[
		\mu_3 : = \min \left\{\mu_2, \frac{1}{4C_{10}} e^{-2C\mu_2(r+1)(T_1+1)}\right\}.
\]
	Then, considering (\ref{ineq:supB0}) and (\ref{ineq:BepsilonB0C10}), it follows that 
	\[
		\|B^t_\varepsilon(q)\| < \|B^t_0(q)\| + C_{10} \mu_3 < \frac{1}{2} e^{-2C\mu_2(r+1)(T_1+1)},
	\]
	for all $t \in [T_1,T_1+1]$, $q \in [0,2\pi]$, $\mu \in (0,\mu_3]$, and $\varepsilon \in [0,\bar{\varepsilon}_\mu]$. In particular, we obtain that
	\[
		\left\|B^{\tilde{T}_\mu}_{\bar{\varepsilon}_\mu}(q)\right\| < \frac{1}{2} e^{-2C\mu_2(r+1)(T_1+1)},
	\]
	for all $q \in [0,2\pi]$ and all $\mu \in (0,\mu_3]$.
	
	For each $t\geq 0$, there are $\tilde{k}_\mu \in \mathbb{N}$ and $\tilde{r}_\mu \in [0,\tilde{T}_\mu)$ such that $t = \tilde{k}_\mu \tilde{T}_\mu + \tilde{r}_\mu$. Let $q_0 \in [0,2\pi]$ and define, as before, $q_i:=\sigma^\varepsilon(-iT_\mu,q_0)$. Reasoning as in (\ref{eq:Bproduct}), it follows that
	\begin{align} \label{ineq: Br+1}
		\left\|B^{\tilde{k}_\mu \tilde{T}_\mu + \tilde{r}_\mu}_{\bar{\varepsilon}_\mu}(q_0)\right\|^{\frac{1}{r+1}} \leq \left(\prod_{i=0}^{\tilde{k}_\mu-1}\left\|B^{\tilde{T}_\mu}_{\bar{\varepsilon}_\mu}(q_i)\right\| \right)^{\frac{1}{r+1}} \left\|B^{\tilde{r}_\mu}_{\bar{\varepsilon}_\mu}(q_{\tilde{k}_\mu})\right\|^{\frac{1}{r+1}},
	\end{align}
	for all $q_0 \in [0,2\pi]$ and all $\mu \in (0,\mu_3]$. By continuity of $(s,q) \mapsto B^s_{\bar{\varepsilon}_\mu}(q)$, for each $\mu \in (0,\mu_3]$ fixed, there is $C_{11}>0$ such that
	\[
		\left\|B^s_{\bar{\varepsilon}_\mu}(q)\right\|^{\frac{1}{r+1}} < C_{11},
\]
	for all $s \in [0,T_1+1]$, $q \in [0,2\pi]$, and $\mu \in (0,\mu_3]$. Thus, by (\ref{ineq: Br+1}), it follows that 
	\begin{align}\label{ineq:Btepsilonmu}
		\left\|B^t_{\bar{\varepsilon}_\mu}(q_0)\right\|^{\frac{1}{r+1}} \leq C_{11} \left(\frac{1}{2} e^{-2C\mu_2\tilde{k}_\mu(T_1+1)}\right),
	\end{align}
	for all $t\geq0$, $q_0 \in [0,2\pi]$, and all $\mu \in (0,\mu_3]$.
	
	By expressing $A^t_\varepsilon(q)$ in the corresponding basis of the family $\{\mathcal{X}^\varepsilon_1(\sigma,\tau), \mathcal{X}^\varepsilon_2(\sigma,\tau)\}$, we obtain
	\[\begin{aligned}
		A^t_\varepsilon(q) =
		\begin{bmatrix}
			1 & 0 \\
			0 & y^\varepsilon(-t,q)
		\end{bmatrix}.
	\end{aligned}\]
	Hence, it follows that 
	\[
		\|A^t_\varepsilon(q)\| \leq \max \{1, y^\varepsilon(-t,q)\},
\]
	for all $t\geq0$, $q \in [0,2\pi]$, $\mu \in (0,\mu_3]$, and $\varepsilon \in [0,\bar{\varepsilon}_\mu]$. Moreover, by Lemma \ref{lemmayepsilon}, we know that
	\[
		y^\varepsilon(-t,q) \leq e^C e^{C\varepsilon \tilde{k}_\mu \tilde{T}_\mu + \tilde{r}_\mu} \leq e^C e^{C \mu_3 \tilde{k}_\mu (T_1+1)} e^{C \mu_3 (T_1+1)},
\]
	for all $t\geq0$, $q \in [0,2\pi]$, $\mu \in (0,\mu_3]$, and $\varepsilon \in [0,\bar{\varepsilon}_\mu]$. Therefore, we conclude that 
	\begin{align} \label{ineq:Atepsilonmu}
		\left\|A^t_{\bar{\varepsilon}_\mu}(q)\right\| \leq e^{C+C\mu_3(T_1+1)} e^{C\mu_3 \tilde{k}_\mu(T_1+1)},
	\end{align}
	for all $t\geq0$, $q \in [0,2\pi]$, and all $\mu \in (0,\mu_3]$.
	
	Considering (\ref{ineq:Btepsilonmu}) and (\ref{ineq:Atepsilonmu}), it follows that
	\[
		\left\|A^t_{\bar{\varepsilon}_\mu}(q)\right\| \,	\left\|B^t_{\bar{\varepsilon}_\mu}(q)\right\|^{\frac{1}{r+1}} \leq C_{11} \, e^{C+C\mu_3(T_1+1)} e^{-C\mu_3 \tilde{k}_\mu(T_1+1)},
\]
	for all $t \geq 0$, $q \in [0,2\pi]$, and all $\mu \in (0,\mu_3]$. Hence, we have that
\[
		\lim_{t \to \infty} \left\|A^t_{\bar{\varepsilon}_\mu}(q)\right\| \,	\left\|B^t_{\bar{\varepsilon}_\mu}(q)\right\|^{\frac{1}{r+1}} = 0,
\]
	for all $q \in [0,2\pi]$ and all $\mu \in (0,\mu_3]$. It is thus proved that
\[
		\kappa^{\bar{\varepsilon}_\mu}(q) \leq \frac{1}{r+1}<\frac{1}{r},
\]
		for all $q \in [0,2\pi]$ and all $\mu \in (0,\mu_3]$. The second claim is thus proved.
		
		All that is left is proving that $M(\bar{\varepsilon}_\mu,\mu)$ is $C^r$. Observe that Theorem 6 in \cite{Fenichel1971} ensures that there is a bundle $I(\bar{\varepsilon}_\mu,\mu)$ that is transversal to $TM(\bar{\varepsilon}_\mu,\mu)$, homeomorphic to $N(\bar{\varepsilon}_\mu,\mu)$, and invariant under the derivative of the flow, $D\phi^{\bar{\varepsilon}_\mu,\mu}(t,\cdot)$ for all $t \in \mathbb{R}$. Then, the splitting $T\mathbb{R}^3=TM(\bar{\varepsilon}_\mu,\mu) \oplus I(\bar{\varepsilon}_\mu,\mu)$ is invariant under this derivative. Hence, we can apply Theorem 4.1.(f) in \cite{hirschpughshub} to guarantee that $M(\bar{\varepsilon}_\mu,\mu)$ is indeed $C^r$. It follows thus that $M(\bar{\varepsilon}_\mu,\mu)$ is $r$-normally hyperbolic.
\end{proof}

\section{Invariant tori in a family of jerk differential equations}\label{sec:app}

This section is dedicated to the proof of Proposition \ref{exprop}. In order to apply Theorem \ref{theoremA} for proving it, we need to write system \eqref{example} in the standard form \eqref{eq:e1}.  Recall that system \eqref{example} is given by
\[
\dddot x=-\dot x+\e^{N-1} P(x,\dot x,\ddot x)+\e^N Q(x,\dot x,\ddot x)+\e^{N+1} R(x,\dot x,\ddot x,\e),
\]
where $P$ and $Q$ satisfy:
\begin{itemize}
\item[{\bf H1.}] the following functions have vanishing average for every $z$ and $r$,
\[
\T\mapsto P(r\sin\T-z,r\cos\T,-r\sin\T)\,\,\text{ and }\,\, \T\mapsto P(r\sin\T-z,r\cos\T,-r\sin\T) \sin(\T)
\]
\item[{\bf H2.}] and
\[
\begin{aligned}
 Q(x,\dot x,\ddot x)=&\ddot x\big(-x-\ddot x+(\dot x^2+\ddot x^2-2)\big(1-(x+\ddot x)^2-\big(\dot x^2+\ddot x^2-2\big)^2\big)\big)\\&+2\ddot x^2(\dot x^2+\ddot x^2-2).
\end{aligned}
\]
\end{itemize}

Applying the change of variables  $(x,\dot x,\ddot x)=G(r,z,\T),$ with 
$$
G(r,z,\T)=\left(\sqrt{r} \sin \theta -z,\, \sqrt{r} \cos\T, \,-\sqrt{r} \sin\T\right),
$$
we obtain the following differential system  
\begin{align*}
\dot{r}=&-\e^{N-1}2\sqrt{r}P(\sqrt{r}\sin\T-z,\sqrt{r}\cos\T,-\sqrt{r}\sin\T)\sin\T\\
&-\e^{N}2\sqrt{r}Q(\sqrt{r}\sin\T-z,\sqrt{r}\cos\T,-\sqrt{r}\sin\T)\sin\T+\CO(\e^{N+1}),\\
\dot{z}=&-\e^{N-1}P(\sqrt{r}\sin\T-z,\sqrt{r}\cos\T,-\sqrt{r}\sin\T)\\
&-\e^{N}Q(\sqrt{r}\sin\T-z,\sqrt{r}\cos\T,-\sqrt{r}\sin\T)+\CO(\e^{N+1}),\\
\dot{\T}=&1-\dfrac{\e^{N-1}}{\sqrt{r}}P(\sqrt{r}\sin\T-z,\sqrt{r}\cos\T,-\sqrt{r}\sin\T)\cos\T\\
&-\dfrac{\e^{N}}{\sqrt{r}}Q(\sqrt{r}\sin\T-z,\sqrt{r}\cos\T,-\sqrt{r}\sin\T)\cos\T+\CO(\e^{N+1}).
\end{align*}
Since $\dot{\theta}=1+\CO\left(\e^{N-1}\right)$, we get that $\dot{\theta}>0$ for $\e$ sufficiently small. Thus, by taking $\theta$ as the new independent variable, the previous system becomes the following non-autonomous $2\pi-$periodic differential equation 
\begin{equation}\label{std1}
\dot \bx=\e^{N-1}F_{N-1}(\T,\bx)+\e^NF_N(\T,\bx)+\CO(\e^{N+1}),
\end{equation}
where the dot denotes the derivative with respect to the variable $\theta$, $\bx=(r,z)$, and the smooth functions $F_{j}(\T,\bx)=(F^2_{j}(\T,\bx),F^1_{j}(\T,\bx)),$ $j\in\{N-1,N\}$, are given by
\begin{equation}\label{fn}
\begin{aligned}
&F_{N-1}^{1}(\T, x)=-2\sqrt{r}\,P(\sqrt{r}\sin\T-z,\sqrt{r}\cos\T,-\sqrt{r}\sin\T)\sin\T,\\
&F_{N-1}^{2}(\T, x)=-P(\sqrt{r}\sin\T-z,\sqrt{r}\cos\T,-\sqrt{r}\sin\T),\\
&F_{N}^{1}(\T, x)=-2\sqrt{r}\,Q(\sqrt{r}\sin\T-z,\sqrt{r}\cos\T,-\sqrt{r}\sin\T)\sin\T,\\
&F_{N}^{2}(\T, x)=-Q(\sqrt{r}\sin\T-z,\sqrt{r}\cos\T,-\sqrt{r}\sin\T).
\end{aligned}
\end{equation}

Now, by using the formulae provided in \eqref{avfunc} and \eqref{yi}, we compute the averaged functions $\f_i,$ for $i=1,\dots,N$, of \eqref{std1}. First, since $N\geq3$, notice that $y_1(t,\bx)=\dots=y_{N-2}(t,\bx)=0$ and, then,
$$
y_{N-1}(t,\bx)=\int_0^t(N-1)!F_{N-1}(s,\bx)ds=(N-1)!\int_0^tF_{N-1}(s,\bx)ds.
$$
Thus, from conditions ${\bf H1}$ and ${\bf H2}$ and taking into account expression \eqref{fn}, we have that $\f_1=\cdots \f_{N-1}=0$. Finally, since $B_{1,1}(y_1(s,\bx))=B_{1,1}(0)=0$, we have that
\[
y_{N}(t,\bx)=\int_0^tN!F_{N}(s,\bx)+N!\partial_\bx F_{N-1}B_{1,1}(y_1(s,\bx))ds=N!\int_0^tF_{N}(s,\bx)ds.
\]
Therefore, from conditions ${\bf H1}$ and ${\bf H2}$  and taking into account the expressions in \eqref{fn}, one can see that
\[
\f_N(\bz)=2\pi \left(r (6 + z (1 + 2 z) - r (11 + (-6 + r) r + z^2)\,,\,r(2-r) \right),
\]
for every $N\geq 3$.
Hence, we conclude that the guiding system \eqref{guiding} for the differential equation \eqref{std1} is given by 
\begin{equation}\label{exemploguiding}
\begin{aligned}
\dot r=&r(6 + z (1 + 2 z) - r (11 + (-6 + r) r + z^2)),\\
\dot z=& r(2-r).
\end{aligned}
\end{equation}

Now, we claim that $\gamma=\mathbb{S}^1+(2,0)$ is an attracting hyperbolic limit cycle of the differential system \eqref{exemploguiding}. In fact, applying a time-rescaling in \eqref{exemploguiding}, by diving the right-hand side of the equations by $r>0$, and then applying translation $r=\rho+2$, we obtain
\begin{equation}\label{exemplofinal}
\begin{aligned}
\rho'=&\rho+z-\rho^3-\rho z^2,\\
z'=& - \rho,
\end{aligned}
\end{equation}
where the prime denotes the derivative with respect to the new time.
Consider the vector field $X(\rho, z)=\left(\rho+z-\rho^3-\rho z^2, -\rho \right)$ and the function $V(\rho,z)=\rho^2+z^2.$ 
Notice that
$$
\big\langle \nabla{V}(\rho,z), X(\rho, z) \big\rangle=-2 \rho ^2 \big(\rho ^2+z^2-1\big).
$$
Thus, $\mathbb{S}^1=V^{-1}(1)$ is an invariant set invariant set of the differential system \eqref{exemplofinal}, which has no equilibria on $\mathbb{S}^1.$ Consequently, from {\it Poincaré-Bendixson Theorem}, $\mathbb{S}^1$ is a closed orbit of \eqref{exemplofinal}. Denote by $\vf(t)=(v_1(t),v_2(t))$ the corresponding periodic solution of \eqref{exemplofinal} satisfying $v_1(\tau)^2+v_2(\tau)^2=1$. Let $\sigma_0$ and $\sigma$, $\sigma_0\subset\sigma$, be a transversal sections of $X$ at $p=(1,0)$ in such way that a Poincaré Map $\pi:\sigma_0\rightarrow\sigma$ of $X$ is well defined. From classical results (see, for instance, \cite[Theorem 1.23]{DLA}), we know that
\[ 
\begin{aligned}
\pi'(p)&=\exp\left[\int^T_0 div X(\vf(t))dt\right]\\
&=\exp\left[\int^T_0 \left(1-v_2(\tau)^2-3v_1(\tau)^2\right)dt\right]\\
&=\exp\left[-2\int^T_0 v_1(\tau)^2 dt\right]<1.
\end{aligned}
\]
Therefore, $\mathbb{S}^1$ is an attracting hyperbolic limit cycle of $X$ and, consequently,  $\gamma=\mathbb{S}^1+(2,0)$ is an attracting hyperbolic limit cycle of the guiding system \eqref{exemploguiding}. Then, the result follows by applying Theorem \ref{theoremA}.

\section*{Appendix: Technical lemmas}
\begin{lemma} \label{gronwalllemma}
	Let $U$ be an open subset of $\mathbb{R}^n$, $f:U \times \R^m \to \mathbb{R}^{n+m}$ be a $C^1$ vector field, and also $F_\mu:U \times \R^m \to \mathbb{R}^{n+m}$, $\mu \in(0,\mu_0)$, be a family of $C^1$ vector fields. Consider the differential equation
	\begin{align} \label{eq:lemmagronwall}
		(\dot x, \dot y) = f(x,y) +\varepsilon F_\mu(x,y),
	\end{align}
	where $\varepsilon \in [0,\varepsilon_0]$. Let $z(t,a,b,\varepsilon,\mu)=\big(x(t,a,b,\varepsilon,\mu),y(t,a,b,\varepsilon,\mu)\big)$ be the solution to (\ref{eq:lemmagronwall}) satisfying $z(0,a,b,\varepsilon,\mu) = (a,b)$, and let  $\Psi(t,a,b,\varepsilon,\mu)$ be the principal fundamental matrix solution at $t=0$ of the first variational equation 
	\[
		\Psi' = \big(Df(z(t,a,b,\varepsilon,\mu))+ \varepsilon DF_\mu(z(t,a,b,\varepsilon,\mu))\big) \Psi.
\]
	Assume that
	\begin{enumerate}[label=\alph*.]
		\item There are $t_*>0$, $Q \subset \R^n \times \R^m \times \R \times \R$, and a compact convex set $K\subset \mathbb{R}^n$ such that $x(t,a,b,\varepsilon,\mu) \in K$ for all $(t,a,b,\varepsilon,\mu) \in [0,t_*] \times Q$;
		\item There is $M_f>0$ such that $|f(x,y)|+|Df(x,y)|+|D^2f(x,y)|<M_f$ for all $(x,y) \in K \times \mathbb{R}^m$;
		\item There is $M_F>0$ such that $|F_\mu(x,y)|+|DF_\mu(x,y)|+|D^2F_\mu(x,y)|<M_F$ for all $(x,y) \in K \times \mathbb{R}^m$ and all $\mu \in \pi_5(Q)$, where $\pi_5$ denotes the canonical projection with respect to the fifth entry;
		\item There is $M_\Psi>0$ such that $|\Psi(t,a,b,0,\mu)|<M_\Psi$ for all $(t,a,b,0,\mu) \in [0,t_*] \times Q$.
	\end{enumerate}
	Then, the following holds:
	\begin{enumerate}[label=\roman*.]
		\item There is $K_1>0$ such that
		\begin{align*}
			|z(t,a_2,b_2,\varepsilon,\mu) - z(t,a_1,b_1,0,\mu)| \leq \left( |(a_2,b_2) - (a_1,b_1)| + \varepsilon|t| \right) K_1 \, e^{K_1 |t|},
		\end{align*} 
		for all $t \in [0,t_*]$, and all $(a_1,b_1, 0,\mu ), (a_2,b_2,\varepsilon,\mu) \in Q$;
		\item There is $K_2>0$ such that 
		\begin{align*}
			|\Psi(t,a_2,b_2,\varepsilon,\mu) - \Psi(t,a_1,b_1,0,\mu)| \leq \left( |(a_2,b_2) - (a_1,b_1)| + \varepsilon|t| \right) K_2 \, e^{K_2 |t|},
		\end{align*}
		for all $t \in [0,t_*]$, and all $(a_1,b_1, 0,\mu ), (a_2,b_2,\varepsilon,\mu) \in Q$.
	\end{enumerate}
\end{lemma}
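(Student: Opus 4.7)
\smallskip

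\noindent\textbf{Proof proposal.} The plan is to prove (i) by a direct Gronwall argument applied to the integral form of the ODE, and then use (i) together with a second Gronwall argument applied to the variational equation to prove (ii). Throughout, I will write $z_j(t) = z(t,a_j,b_j,\varepsilon_j,\mu)$ for $j\in\{1,2\}$ with $\varepsilon_1=0$, $\varepsilon_2=\varepsilon$, and similarly $\Psi_j(t)=\Psi(t,a_j,b_j,\varepsilon_j,\mu)$, suppressing the other arguments.

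For statement (i), I would start from the integral equations
\[
z_j(t) = (a_j,b_j) + \int_0^t \bigl[f(z_j(s)) + \varepsilon_j F_\mu(z_j(s))\bigr] ds,
\]
subtract, and use the triangle inequality. Since assumption (a) keeps $z_j(t)$ in the compact set $K\times\mathbb{R}^m$ (in the $x$-component) for $t\in[0,t_*]$, and since hypotheses (b)--(c) yield $|Df|\le M_f$ on the relevant set, the map $x\mapsto f(x,y)$ is $M_f$-Lipschitz in the $x$-variable; an analogous statement holds for $F_\mu$. The tricky point is that $f$ and $F_\mu$ depend on $y\in\mathbb{R}^m$ as well, which is not constrained to a compact set, so I must also exploit that $Df$ is bounded globally in $y$ (hypothesis (b) is phrased over $K\times\mathbb{R}^m$, which is exactly what is needed). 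Combining the Lipschitz estimate on $f$, the uniform bound $|F_\mu|\le M_F$, and $\varepsilon_1=0$, we obtain
\[
|z_2(t)-z_1(t)| \le |(a_2,b_2)-(a_1,b_1)| + \varepsilon M_F\, t + (M_f+\varepsilon M_F)\!\int_0^t |z_2(s)-z_1(s)|\, ds,
\]
and Gronwall's inequality directly yields (i) with $K_1$ depending only on $M_f$, $M_F$, and $\varepsilon_0$.

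For statement (ii), I would first produce an a priori bound on $|\Psi_2(t)|$. Writing $\Psi_2(t)=I + \int_0^t (Df + \varepsilon DF_\mu)(z_2(s))\,\Psi_2(s)\,ds$ and applying Gronwall gives $|\Psi_2(t)|\le e^{(M_f+\varepsilon_0 M_F)t}$; together with hypothesis (d) for $\Psi_1$, both matrices are uniformly bounded on $[0,t_*]$. Next, subtracting the integral equations for $\Psi_2$ and $\Psi_1$ produces
\[
\Psi_2(t)-\Psi_1(t) = \int_0^t\!\! \bigl\{[Df(z_2)-Df(z_1)]\Psi_2 + Df(z_1)[\Psi_2-\Psi_1] + \varepsilon DF_\mu(z_2)\Psi_2 \bigr\}\, ds.
\]
Here I need $Df$ to be Lipschitz, which is exactly the content of the bound $|D^2 f|\le M_f$ in hypothesis (b); this gives $|Df(z_2)-Df(z_1)|\le M_f|z_2-z_1|$. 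Substituting the estimate from (i) to control $|z_2-z_1|$, the uniform bounds on $\Psi_2$ and $DF_\mu$, and absorbing the term $\int_0^t Df(z_1)[\Psi_2-\Psi_1]\,ds$ into the left-hand side via Gronwall yields (ii), with $K_2$ depending only on $M_f$, $M_F$, $M_\Psi$, $t_*$, $\varepsilon_0$, and the constant $K_1$ from (i).

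The main obstacle is purely bookkeeping: the constants $K_1,K_2$ must be chosen so that the final inequality has the stated form $(|(a_2,b_2)-(a_1,b_1)|+\varepsilon|t|)K e^{K|t|}$ rather than a sum of several exponentials, which requires carefully enlarging $K$ at each step. There is no conceptual difficulty once one recognises that hypothesis (b) supplies the $C^2$ bounds needed to Lipschitz-estimate $Df$, and that hypothesis (d) is only used to get a uniform bound on $\Psi_1$ (the unperturbed fundamental matrix) on the whole domain $Q$, while the bound on $\Psi_2$ is obtained from the equation itself.
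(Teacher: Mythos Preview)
Your proposal is correct and follows essentially the same Gronwall-twice strategy as the paper: integral form of the ODE plus Lipschitz bound on $f$ for (i), then integral form of the variational equation plus the $C^2$ bound on $f$ (to make $Df$ Lipschitz) together with (i) for (ii).

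The only noteworthy difference is in the algebraic splitting for (ii). The paper writes the integrand as
\[
D(f+\varepsilon F_\mu)(z_2)\,[\Psi_2-\Psi_1] + [Df(z_2)-Df(z_1)]\,\Psi_1 + \varepsilon DF_\mu(z_2)\,\Psi_1,
\]
so that the ``forcing'' terms carry the factor $\Psi_1$, which is bounded directly by hypothesis~(d). You instead group the terms so that $\Psi_2$ is the factor and bound $|\Psi_2(t)|\le e^{(M_f+\varepsilon_0 M_F)t}$ by a preliminary Gronwall estimate. Both groupings work; in fact, your version makes hypothesis~(d) redundant on $[0,t_*]$, which you nearly observe but do not quite say. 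This is a cosmetic rather than a substantive divergence.
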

\begin{proof}
	By definition, $z(t,a,b,\varepsilon,\mu)$ is a solution of $z'=f(z)+\varepsilon F_\mu(z)$ with $z(0,a,b,\varepsilon,\mu) = (a,b)$. Let $(t,a_2,b_2,\varepsilon,\mu), (t,a_1,b_1,0,\mu) \in [0,A] \times Q$. It follows from the triangle inequality that
\[
	\begin{aligned}
		|z(t,a_2,b_2,\varepsilon,\mu) - z(t,a_1,b_1,0,\mu)| \leq  |&(a_2,b_2) - (a_1,b_1)|  \\ + &\int_0^t |f(z(s,a_2,b_2,\varepsilon,\mu)) - f(z(s,a_1,b_1,0,\mu))| ds  \nonumber \\ + &\int_0^t \varepsilon |F_\mu (z(s,a_2,b_2,\varepsilon,\mu))| ds. \nonumber 
	\end{aligned}
	\]
	By the mean value inequality combined with hypotheses $(a)$ and $(b)$, we have
	\begin{align*}
		|f(z(s,a_2,b_2,\varepsilon,\mu)) - f(z(s,a_1,b_1,0,\mu))| \leq M_f  |z(s,a_2,b_2,\varepsilon,\mu) - z(s,a_1,b_1,0,\mu)|,
	\end{align*}
	for all $s \in [0,A]$. Moreover,
	hypotheses $(a)$ and $(c)$ ensure that 	\begin{align*}
		\int_0^t \varepsilon |F_\mu (z(s,a_2,b_2,\varepsilon,\mu))| ds \leq \varepsilon \, M_F |t|.
	\end{align*}
	Thus, it follows that 
	\begin{align*}
		|z(t,a_2,b_2,\varepsilon,\mu) - z(t,a_1,b_1,0,\mu)| \leq |&(a_2,b_2) - (a_1,b_1)|  \\ + &\int_0^t M_f  |z(s,a_2,b_2,\varepsilon,\mu) - z(s,a_1,b_1,0,\mu)| ds  \nonumber \\ + & \;\varepsilon M_F |t|.
	\end{align*}
	By Gronwall's inequality, we have
	\begin{align*}
		|z(t,a_2,b_2,\varepsilon,\mu) - z(t,a_1,b_1,0,\mu)| \leq \left(|(a_2,b_2)-(a_1,b_1)| +M_F \varepsilon |t| \right) e^{M_f|t|}.
	\end{align*}
	Defining $K_1 := \max\{1,M_f,M_F\}$ concludes the proof of $(i)$. For the second part, we observe that the triangle inequality guarantees that
	\begin{align*}
		|\Psi&(t,a_2,b_2,\varepsilon,\mu) - \Psi(t,a_1,b_1,0,\mu)| \leq  |\Psi(0,a_2,b_2,\varepsilon,\mu) - \Psi(0,a_1,b_1,0,\mu)|  \\ + &\int_0^t |D(f+\varepsilon F_\mu)(z(s,a_2,b_2,\varepsilon,\mu)) \Psi(s,a_2,b_2,\varepsilon,\mu) - Df(z(s,a_1,b_1,0,\mu))\Psi(s,a_1,b_1,0,\mu)| ds.  \nonumber 
	\end{align*}
	Observe that, since $\Psi$ is a principal fundamental matrix solution at $t=0$, it follows that $|\Psi(0,a_2,b_2,\varepsilon,\mu) - \Psi(0,a_1,b_1,0,\mu)|=0$. Furthermore, we can rewrite the integrand as
	\begin{align*}
		|D(f+\varepsilon F_\mu)&(z(s,a_2,b_2,\varepsilon,\mu)) (\Psi(s,a_2,b_2,\varepsilon,\mu)-\Psi(s,a_1,b_1,0,\mu))\\ +& (	Df(z(s,a_2,b_2,\varepsilon,\mu))-Df(z(s,a_1,b_1,0,\mu)))\Psi(s,a_1,b_1,0,\mu) \\ +& \varepsilon DF_\mu(z(s,a_2,b_2,\varepsilon,\mu)) \Psi(s,a_1,b_1,0,\mu)|.
	\end{align*}
	Since $\varepsilon \in [0,\varepsilon_0]$ and by hypotheses $(a)$, $(b)$, and $(c)$, the inequality
	\begin{align} \label{lemmatecineq1}
		|D(f+\varepsilon F_\mu)(z(s,a_2,b_2,\varepsilon,\mu))| \leq M_f + \varepsilon_0 M_F,
	\end{align}
	holds for all $s \in [0,t]$. We define $M_0:=M_f + \varepsilon_0 M_F$ to simplify notation. The mean value inequality applied to $Df$ ensures that
	\begin{align*}
		|(Df(z(s,a_2,b_2,\varepsilon,\mu))-&Df(z(s,a_1,b_1,0,\mu)))\Psi(s,a_1,b_1,0,\mu)|\\ &\leq M_f |z(s,a_2,b_2,\varepsilon,\mu) - z(s,a_1,b_1,0,\mu)| |\Psi(s,a_1,b_1,0,\mu)|.
	\end{align*}
	Considering hypothesis $(d)$ and that we have already proved $(i)$, it follows that
	\begin{align*}
		|(Df(z(s,a_2,b_2,\varepsilon,\mu))-&Df(z(s,a_1,b_1,0,\mu)))\Psi(s,a_1,b_1,0,\mu)| \\ &\leq M_f \, M_\Psi \left( |(a_2,b_2) - (a_1,b_1)| + \varepsilon|s| \right) K_1 \, e^{K_1 |s|},
	\end{align*}
	for all $s \in [0,t] \subset [0,t_*]$. Since $0\leq s \leq t \leq t_*$, there is $M_1>0$ such that 
	\begin{align} \label{lemmatecineq2}
		|(Df(z(s,a_2,b_2,\varepsilon,\mu))-&Df(z(s,a_1,b_1,0,\mu)))\Psi(s,a_1,b_1,0,\mu)| \nonumber \\ &\leq M_1 \left( |(a_2,b_2) - (a_1,b_1)| + \varepsilon \right).
	\end{align}
	Furthermore, hypotheses $(a)$, $(c)$, and $(d)$ combined guarantee that
	\begin{align} \label{lemmatecineq3}
		|\varepsilon DF_\mu(z(s,a_2,b_2,\varepsilon,\mu)) \Psi(s,a_1,b_1,0,\mu)| < \varepsilon M_F \, M_{\Psi},
	\end{align}
	for all $s \in [0,t]$. We define $M_2 = M_F \, M_\Psi$. By applying the triangle inequality to the integrand and considering (\ref{lemmatecineq1}), (\ref{lemmatecineq2}), and (\ref{lemmatecineq3}), it follows that
	\begin{align*}
		|\Psi(t,a_2,b_2,\varepsilon,\mu) - \Psi(t,a_1,b_1,0&,\mu)| \leq  \int_0^t M_0 |\Psi(s,a_2,b_2,\varepsilon,\mu)-\Psi(s,a_1,b_1,0,\mu)| ds \\ &+ \int_0^t M_1 \left( |(a_2,b_2) - (a_1,b_1)| + \varepsilon \right) ds + \int_0^t \varepsilon M_2 \, ds.
	\end{align*}
	Considering that $t \in [0,t_*]$, we have that
	\begin{align*}
		|\Psi(t,a_2,b_2,\varepsilon,\mu) - \Psi(t,a_1,b_1,0&,\mu)| \leq  \int_0^t M_0 |\Psi(s,a_2,b_2,\varepsilon,\mu)-\Psi(s,a_1,b_1,0,\mu)| ds \\ &+  M_1 \left( |(a_2,b_2) - (a_1,b_1)|t_* + \varepsilon|t| \right)  +  \varepsilon M_2 |t|.
	\end{align*}
	Hence, an application of Gronwall's inequality ensures that
	\begin{align*}
		|\Psi(t,a_2,b_2,\varepsilon,\mu) - \Psi(t,a_1,b_1,0,\mu)| \leq  \left(M_1 t_* |(a_2,b_2) - (a_1,b_1)|  + (M_1+M_2)\varepsilon|t| \right) e^{M_0|t|}.
	\end{align*}
	In order to finish the proof, it suffices to define $K_2:=\max\{M_0, \, M_1t_*,\, (M_1+M_2)\}$.
\end{proof}
\begin{remark}
	Lemma \ref{gronwalllemma} remains valid, by adjusting the hypotheses accordingly, if $t_*<0$ and $t \in [t_*,0]$. In order to prove it, it suffices to apply Lemma \ref{gronwalllemma} to the system $(\dot x, \dot y) = -f(x,y)-\varepsilon F_\mu(x,y)$. 
\end{remark}

\begin{lemma} \label{lemmatecnico}
	Let $M: \mathbb{R}^2 \to M_2(\mathbb{R})$ be a continuous function that is $2\pi$-periodic in the second argument, $\lambda:\mathbb{R} \to \mathbb{R}$ a continuous $2\pi$-periodic real function and $B \subset \mathbb{R}$ bounded. Suppose there is $M_0>0$ such that $|m_{21}(s,q) \lambda(q) + m_{22}(s,q)| \geq M_0 \,$ for all $(s,q) \in \mathbb{R} \times [0,2\pi]$. Then, for each $\varepsilon>0$, there is $r>0$ such that if $t \in B$; $q,\xi \in \mathbb{R}$; and $D \in M_2(\mathbb{R})$ are such that $|\xi-\lambda(q)|<r$, and $|D - M(t,q)| < r$, then the following inequalities hold:
	\begin{align*}
		|d_{21} \xi +d_{22}| > \frac{2}{3} M_0, \qquad |\det D - \det M(t,q)|< \varepsilon.
	\end{align*}
\end{lemma}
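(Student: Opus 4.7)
The plan is to exploit the $2\pi$-periodicity in $q$ and the boundedness of $B$ to reduce the problem to a routine uniform-continuity argument on a compact set. Since $B$ is bounded, $\overline{B}$ is compact, and since $M$ is $2\pi$-periodic in its second argument we may, without loss of generality, assume $q\in[0,2\pi]$. I will therefore work on the compact set $K:=\overline{B}\times[0,2\pi]$, on which $M$ is continuous, hence bounded and uniformly continuous. Set $C_M:=\sup_{(s,q)\in K}|M(s,q)|$ and $C_\lambda:=\sup_{q\in[0,2\pi]}|\lambda(q)|$; both are finite.

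For the first inequality, I would apply the triangle inequality in the form
\[
|d_{21}\xi+d_{22}|\ \geq\ |m_{21}(t,q)\lambda(q)+m_{22}(t,q)|\ -\ |d_{21}\xi+d_{22}-m_{21}(t,q)\lambda(q)-m_{22}(t,q)|.
\]
By hypothesis the first term is $\geq M_0$. The second term can be rearranged as
\[
\bigl|d_{21}(\xi-\lambda(q))+(d_{21}-m_{21}(t,q))\lambda(q)+(d_{22}-m_{22}(t,q))\bigr|,
\]
which, using $|d_{21}|\leq C_M+r$, $|\lambda(q)|\leq C_\lambda$, and the two hypotheses $|\xi-\lambda(q)|<r$ and $|D-M(t,q)|<r$, is bounded by $(C_M+r)r+C_\lambda r+r$. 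Choosing $r$ small enough makes this less than $M_0/3$, which yields the desired bound $|d_{21}\xi+d_{22}|>\tfrac{2}{3}M_0$.

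For the second inequality, I would expand the difference of determinants as
\[
\det D-\det M(t,q)=d_{11}(d_{22}-m_{22})+(d_{11}-m_{11})m_{22}-d_{12}(d_{21}-m_{21})-(d_{12}-m_{12})m_{21},
\]
where the $m_{ij}$ are evaluated at $(t,q)$. Each of the four summands is the product of a factor bounded (uniformly on $K$) by $C_M+r$ and a factor bounded by $r$, so the total is bounded by $4(C_M+r)r$, which can be made smaller than $\varepsilon$ by shrinking $r$. Taking $r$ small enough to satisfy both conditions simultaneously gives the lemma.

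There is no real obstacle here beyond bookkeeping: the whole argument rests on reducing to the compact set $\overline{B}\times[0,2\pi]$ via periodicity so that $M$ and $\lambda$ are uniformly bounded, after which both estimates are elementary consequences of the triangle inequality and multilinearity of the determinant.
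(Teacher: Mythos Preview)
Your proof is correct and follows essentially the same strategy as the paper: reduce via $2\pi$-periodicity and boundedness of $B$ to a compact parameter set, then use the resulting uniform bounds to control the two quantities. The only stylistic difference is that the paper phrases the argument through uniform continuity of the auxiliary maps $(\xi,D)\mapsto |d_{21}\xi+d_{22}|$ and $D\mapsto\det D$ on a compact neighborhood of the image, whereas you carry out the same estimates by hand via explicit triangle-inequality and bilinearity bounds; both routes are equivalent and yield the same $r$.
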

\begin{proof}
	Since $\lambda$ is continuous and periodic, the set $I_1:=\lambda(\mathbb{R})$ is compact. Similarly, $S_2:=M(\overline{B},\mathbb{R})$ is also compact. Define $S_1':=\{\xi \in \mathbb{R}: d(\xi,S_1)\leq 1\}$ and $S_2':=\{D \in M_2(\mathbb{R}): d(D,S_2)\leq 1\}$. It is clear that there are compact sets $\mathcal{K}_1 \subset \mathbb{R}$ and $\mathcal{K}_2 \subset M_2(\mathbb{R})$ satisfying $S_1' \subset \mathcal{K}_1$ and $S_2' \subset \mathcal{K}_2$. Observe that the function $T:\mathbb{R} \times M_2(\mathbb{R}) \to \mathbb{R}$ given by $T(\xi,D) = |d_{21}\xi+d_{22}|$ is continuous. Hence, $T$ is uniformly continuous on the compact set $\mathcal{K}_1 \times \mathcal{K}_2$. We may thus find $\delta_1>0$ such that, if $(\xi,D),(\xi^0,D^0) \in \mathcal{K}_1 \times \mathcal{K}_2$ and $|\xi - \xi^0|+ |D-D^0|< \delta_1$, then $|d_{21} \xi +d_{22} - (d^0_{21}\xi^0 +d^0_{22})| < M_0/3$. In particular, it follows that, for all $q \in \mathbb{R}$ and all $t \in \overline{B}$, if $|\xi-\lambda(q)| + |D-M(t,q)|<\delta_1$, then 
	\[
		|d_{21}\xi + d_{22} - (m_{21}(t,q)\lambda(q) + m_{22}(t,q))|< \frac{M_0}{3}.
\]
	An application of the reverse triangle inequality ensures that, for all $q \in \mathbb{R}$ and all $t \in \overline{B}$, if $|\xi-\lambda(q)| + |D-M(t,q)|<\delta_1$, then  
	\[
		|m_{21}(t,q)\lambda(q) + m_{22}(t,q)| -\frac{M_0}{3} < |d_{21}\xi + d_{22}|.
\]
	By hypothesis, this implies that, if $|\xi-\lambda(q)| + |D-m(t,q)|<\delta_1$, then
\[
		|d_{21}\xi + d_{22}| > \frac{2M_0}{3}.
\] 
	Since the determinant is also a continuous function, thus uniformly continuous on $\mathcal{K}_2$, for each $\varepsilon>0$ given, there is $\delta_2>0$ such that, if $D,D^0 \in \mathcal{K}_2$ and $|D-D^0|<\delta_2$, then $|\det D - \det D^0|<\varepsilon$. In particular, for all $q \in \mathbb{R}$ and all $t \in \overline{B}$, if $D \in \mathcal{K}_2$ and $|D - M(t,q)|<\delta_2$, then $|\det D - \det M(t,q)|<\varepsilon$.
	
	Finally we define
\[
		r : = \frac{1}{2} \min\{1,\delta_1,\delta_2\}.
\]
	Let $\xi \in \mathbb{R}$ and $D \in M_2(\mathbb{R})$. Observe that, if there are $t \in B$, $q \in \mathbb{R}$ such that $|\xi - \lambda(q)|< r < 1$ and $|D-M(t,q)|<r<1$, then $\xi \in S_1' \subset \mathcal{K}_1$ and $D \in S_2' \subset \mathcal{K}_2$. Moreover, in this case, it follows that $|\xi - \lambda(q)| + |D- M(t,q)| <\delta_1$ and $|D-M(t,q)|<\delta_2$, so that 
	\begin{align*}
		|d_{21} \xi +d_{22}| > \frac{2}{3} M_0, \qquad |\det D - \det M(t,q)|< \varepsilon,
	\end{align*}
	as we wished to prove.
\end{proof}

\begin{lemma} \label{lemmaC2normbounded}
	Let $D:= (a,b) \times (c,d) \subset \mathbb{R}^2$ and $\mathcal{F}:=\{f_n\}_{n \in \mathbb{N}}$ a sequence of continuous real valued functions defined on $\overline{D}$. If the family $\mathcal{F}_D:=\{f_n|_D : n \in \mathbb{N}\}$ is in $C^2(D)$ and is uniformly bounded in the $C^2$ norm, then there is a subsequence of $(f_n)_{n \in \mathbb{N}}$ that converges uniformly to a function $f_*:\overline{D} \to \mathbb{R}$ satisfying $f_*|_D \in C^1(D)$.
\end{lemma}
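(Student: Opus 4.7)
The plan is to apply the Arzelà--Ascoli theorem twice: once to the sequence $\{f_n\}$ itself on the compact set $\overline{D}=[a,b]\times[c,d]$, and once to the sequences of first order partial derivatives on compact subsets of $D$. The last step is to verify that the two limits are compatible in the sense that the limits of the partial derivatives equal the partial derivatives of the limit.

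First, the uniform $C^2(D)$ bound on $\{f_n\}$ yields a uniform bound $M>0$ on $\partial_x f_n$ and $\partial_y f_n$ on $D$. Since $D$ is a convex set, this implies that each $f_n|_D$ is Lipschitz on $D$ with Lipschitz constant bounded independently of $n$. By continuity of each $f_n$ on $\overline{D}$, the Lipschitz inequality extends from $D$ to $\overline{D}$, so the family $\{f_n\}$ is uniformly equicontinuous on $\overline{D}$. The uniform $C^0$-bound on $D$ likewise extends by continuity to $\overline{D}$. An application of the Arzelà--Ascoli theorem on the compact set $\overline{D}$ then yields a subsequence, which we continue to denote by $\{f_n\}$, converging uniformly on $\overline{D}$ to a continuous function $f_*:\overline{D}\to\mathbb{R}$.

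Second, the uniform bound on the second order partial derivatives on $D$ makes each $\partial_x f_n$ and $\partial_y f_n$ Lipschitz on $D$ with a uniform Lipschitz constant, hence uniformly equicontinuous. I exhaust $D$ by an increasing sequence of compact sets $K_j\subset D$ with $\bigcup_j K_j=D$, and a diagonal argument combined with Arzelà--Ascoli applied on each $K_j$ produces a further subsequence (still denoted $\{f_n\}$) such that $\partial_x f_n\to g_x$ and $\partial_y f_n\to g_y$ uniformly on each $K_j$, where $g_x,g_y$ are continuous on $D$.

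Finally, I invoke the standard calculus fact: if $f_n\to f_*$ uniformly on $\overline{D}$ and $\partial_x f_n\to g_x$, $\partial_y f_n\to g_y$ uniformly on compact subsets of $D$, then $f_*|_D$ is continuously differentiable with $\partial_x f_*=g_x$ and $\partial_y f_*=g_y$, so $f_*|_D\in C^1(D)$. This last step is the only subtle point, and it is established by writing, for $(x,y)\in D$ and $h$ small enough that $[x,x+h]\times\{y\}\subset D$,
\[
f_n(x+h,y)-f_n(x,y)=\int_0^h \partial_x f_n(x+s,y)\,ds,
\]
passing to the limit $n\to\infty$ on both sides using uniform convergence on the compact segment, and then differentiating with respect to $h$ at $h=0$ using continuity of $g_x$; the analogous argument gives $\partial_y f_*=g_y$.
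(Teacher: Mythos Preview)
Your proof is correct and follows essentially the same strategy as the paper: Arzel\`a--Ascoli for $\{f_n\}$ on $\overline{D}$, then Arzel\`a--Ascoli for the first partials, then the fundamental theorem of calculus to identify the limit of the partials with the partials of the limit. The only cosmetic difference is that you handle the non-compactness of $D$ for the derivative sequences via an exhaustion $K_j\uparrow D$ and a diagonal argument, whereas the paper simply applies Arzel\`a--Ascoli directly (implicitly relying on the fact that the uniformly Lipschitz derivatives extend continuously to the compact closure $\overline{D}$); either device works and leads to the same conclusion.
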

\begin{proof}
	Let $D$ and $\mathcal{F}=\{f_n\}_{n \in \mathbb{N}}$ be as stated. Since the family $\{f_n|_D\}$ is uniformly bounded in the $C^2$-norm, it follows that there is $M>0$ such that $\|f_n|_D\|_{C^2}\leq M$ for all $n \in \mathbb{N}$. In particular, $\|f_n|_D\|_{C^0}\leq M$ for all $n \in \mathbb{N}$. Thus, since each $f_n$ is continuous on $\overline{D}$, it follows that $\mathcal{F}=\{f_n\}_{n \in \mathbb{N}}$ is uniformly bounded by $M$ in the $C^0$ norm.
	
	We shall prove that $\mathcal{F}$ is also uniformly equicontinuous. Let $\varepsilon>0$ be given. We must show that there is $\delta>0$ such that, for all $f \in \mathcal{F}$ and all $x_1,x_2 \in \overline{D}$, if $|x_1-x_2| < \delta$, then $|f(x_1)-f(x_2)|< \varepsilon$. In order to do so, take $\delta =\varepsilon/(2M)$. Then, since $\|f'(x)\|\leq \|f|_D\|_{C^2} \leq M $ for all $x \in D$ and all $f \in \mathcal{F}$, by the mean value inequality, it follows that
	\begin{align*}
		|f(x_1)-f(x_2)| \leq M |x_1-x_2| \leq M \frac{\varepsilon}{2M} < \varepsilon,
	\end{align*}
	for all $f \in \mathcal{F}$ and all $x_1,x_2 \in \overline{D}$. Thus, $\mathcal{F}$ is uniformly equicontinuous.
	
	Considering that $\mathcal{F}$ is uniformly bounded and uniformly equicontinuous, we may apply the Arzelà-Ascoli theorem to obtain a subsequence $(f_{n_k})_{k \in \mathbb{N}}$ that converges uniformly to $f_* \in C^0(\overline{D})$. 
	
	It remains to be shown that $f_*|_D \in C^1(D)$. Define $g_k:= \partial_1 f_{n_k}|_D$, where $\partial_1$ denotes the partial derivative with respect to the first entry. Let $\mathcal{G}:=\{g_k : k \in \mathbb{N}\}$. Observe that $\mathcal{G}$ is uniformly bounded in the $C^0$ norm, because $\|g_k\|_{C^0} \leq \|f'_{n_k}|_D\|_{C^0}\leq \|f_{n_k}|_D\|_{C^2}\leq M$. Furthermore, $\|g'_k\|_{C^0}= \|(\partial_1 f_{n_k}|_D)'\|_{C^0} \leq \|(f_{n_k}|_D)''\|_{C^0} \leq \|f_{n_k}|_D\|_{C^2}\leq M$, so that the derivative of each $g_k$ is bounded by $M$. Thus, an application of the mean value inequality, as done above with $\mathcal{F}$, ensures that the family $\mathcal{G}$ is also uniformly equicontinuous. Once again, we apply the Arzelà-Ascoli theorem to obtain a subsequence $(g_{k_l})_{l \in \mathbb{N}}$ that converges uniformly to $g_* \in C^0(D)$. 
	
	Now, define $h_j:=\partial_2 f_{n_{k_j}}|_D$. Proceeding as above, we obtain a subsequence $(h_{j_i})_{i \in \mathbb{N}}$ that converges uniformly to $h_* \in C^0(D)$. In order to simplify notation, define a new set of indices $n_i:=n_{k_{j_i}}$. Then, it follows that
	\begin{align*}
		\lim_{i\to \infty} f_{n_i} = f_*, \qquad \lim_{i\to \infty} g_{n_i} = g_*, \quad \text{and} \quad \lim_{i\to \infty} h_{n_i} = h_*,
	\end{align*}
	where all the limits are uniform limits.
	
	We shall prove that $\partial_1 f_*|_D = g_*$. In fact, let $(s,t) \in D$. By the fundamental theorem of calculus, we have that 
	\begin{align*}
		f_{n_i}(s,t) = f_{n_i}(a,t) + \int_a^s (\partial_1 f_{n_i}|_D(\sigma,t)) d\sigma = f_{n_i}(a,t) + \int_a^s g_{n_i}(\sigma,t) d\sigma.
	\end{align*}
	Taking the limit as $i \to \infty$, it follows that 
	\begin{align*}
		f_*(s,t) = f_*(a,t) + \int_a^s g_*(\sigma,t) d\sigma.
	\end{align*}
	By taking the partial derivative with respect to $s$, it follows that $\partial_1 f_*(s,t) = g_*(s,t)$ for all $(s,t) \in D$, that is, $\partial_1 f_*|_D = g_*$. A similar argument shows that $\partial_2 f_*|_D = h_*$.
	
	Finally, since $g_*$ and $h_*$ are continuous on $D$, it follows that $f_*$ has continuous partial derivatives on $D$. Therefore, $f_* \in C^1(D)$.	
\end{proof}
\begin{lemma} \label{lemmaC2normperiodic}
	Let $\mathcal{F}:=\{f_n:\mathbb{R}^2 \to \mathbb{R}: n \in \mathbb{N}\}$ be a sequence of continuous real valued functions, each $T_1$-periodic in the first entry and $T_2$-periodic in the second entry. Suppose that the family $\mathcal{F}$ is in $C^2(\mathbb{R}^2)$ and is uniformly bounded in the $C^2$ norm. Then, there is a subsequence of $(f_n)_{n \in \mathbb{N}}$ that converges uniformly to a function $f_* \in C^1(\mathbb{R}^2)$. 
\end{lemma}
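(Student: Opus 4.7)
The plan is to reduce to Lemma~\ref{lemmaC2normbounded} on a bounded open rectangle that strictly contains a closed fundamental domain of periodicity, and then propagate the result to all of $\mathbb{R}^2$ by periodicity. Concretely, I would set $D := (-1, T_1+1) \times (-1, T_2+1)$, so that the closed fundamental domain $[0,T_1] \times [0,T_2]$ lies strictly inside $D$. Since $\mathcal{F}$ is uniformly bounded in the $C^2$ norm on $\mathbb{R}^2$, the restrictions $\{f_n|_D\}$ are uniformly bounded in the $C^2(D)$ norm, so Lemma~\ref{lemmaC2normbounded} furnishes a subsequence $(f_{n_k})_{k\in\mathbb{N}}$ converging uniformly on $\overline{D}$ to a continuous function $f_*:\overline{D}\to\mathbb{R}$ with $f_*|_D \in C^1(D)$.

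Next, I would check that $f_*$ inherits the periodicity of the $f_{n_k}$ on its domain: whenever $(x,y) \in \overline{D}$ and $(x+T_1,y) \in \overline{D}$, uniform convergence yields
\[
f_*(x+T_1,y) = \lim_{k\to\infty} f_{n_k}(x+T_1,y) = \lim_{k\to\infty} f_{n_k}(x,y) = f_*(x,y),
\]
and likewise in the second variable. Because $\overline{D}$ strictly contains the fundamental domain, this partial periodicity is enough to guarantee that the natural periodic extension $\widetilde{f}_*:\mathbb{R}^2\to\mathbb{R}$, defined by $\widetilde{f}_*(x,y) := f_*(x - nT_1, y - mT_2)$ with $(n,m)$ chosen so that $(x-nT_1,y-mT_2)\in[0,T_1)\times[0,T_2)$, is well-defined and agrees with $f_*$ on $\overline{D}$.

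Finally, I would verify the two required properties of $\widetilde{f}_*$. For the $C^1$ regularity, given any $(x_0,y_0)\in\mathbb{R}^2$, choose $(n,m)\in\mathbb{Z}^2$ so that $(x_0',y_0') := (x_0-nT_1,\, y_0-mT_2) \in [0,T_1)\times[0,T_2) \subset D$; since $D$ is open and $f_*|_D \in C^1(D)$, the function $\widetilde{f}_*$ coincides with a translate of $f_*|_D$ on an open neighborhood of $(x_0,y_0)$, so $\widetilde{f}_* \in C^1$ at $(x_0,y_0)$. For uniform convergence on all of $\mathbb{R}^2$, periodicity of both $f_{n_k}$ and $\widetilde{f}_*$ gives
\[
\sup_{(x,y)\in\mathbb{R}^2} |f_{n_k}(x,y) - \widetilde{f}_*(x,y)| = \sup_{(x,y)\in[0,T_1]\times[0,T_2]}|f_{n_k}(x,y) - f_*(x,y)|,
\]
and the right-hand side tends to $0$ because $[0,T_1]\times[0,T_2] \subset \overline{D}$ and the convergence on $\overline{D}$ is uniform. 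I do not anticipate a serious obstacle; the only careful point is insisting that $D$ be chosen strictly larger than a fundamental domain, which is precisely what ensures both the consistency of the periodic extension and that every point of $\mathbb{R}^2$ sits in the \emph{interior} of some lattice translate of $D$, so that the $C^1$-on-$D$ conclusion transfers to $C^1$-on-$\mathbb{R}^2$.
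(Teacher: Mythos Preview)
Your approach is correct and in fact cleaner than the paper's. The paper applies Lemma~\ref{lemmaC2normbounded} three separate times on the overlapping rectangles $D_0=(0,T_1)\times(0,T_2)$, $D_1=(T_1/2,3T_1/2)\times(0,T_2)$, and $D_2=(0,T_1)\times(T_2/2,3T_2/2)$, extracting nested subsequences at each stage, and then patches the three limits together by checking they agree on overlaps and on period-shifts. Your idea of taking a single enlarged rectangle $D=(-1,T_1+1)\times(-1,T_2+1)$ strictly containing the closed fundamental domain accomplishes the same goal in one stroke: because $[0,T_1]\times[0,T_2]$ lies in the \emph{interior} of $D$, the $C^1$ conclusion of Lemma~\ref{lemmaC2normbounded} already covers every point of a fundamental domain, and the periodic extension is then automatically $C^1$ everywhere. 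What the paper's approach buys is that each rectangle is exactly one period wide, making the compatibility checks very concrete; what your approach buys is economy---a single subsequence extraction instead of three---and the avoidance of any multi-chart patching.

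One small point worth making explicit in your write-up: the claim that $\widetilde f_*$ agrees with $f_*$ on all of $\overline D$ (not just on $[0,T_1)\times[0,T_2)$) may require iterating the one-step relation $f_*(x+T_1,y)=f_*(x,y)$ several times when $T_1$ or $T_2$ is small compared to the fixed margin $1$. Each intermediate point stays in $\overline D$, so the iteration is routine, but a sentence to that effect would close the argument cleanly. Alternatively, choosing the margin to be $\min(T_1,T_2)$ rather than $1$ makes a single shift suffice.
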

\begin{proof}
	Let $D_0:=(0,T_1) \times (0,T_2)$. Then, the family $\mathcal{F}_0:=\{f_n|_{\overline{D}_0}\}_{n \in \mathbb{N}}$ satisifies the hypotheses of Lemma \ref{lemmaC2normbounded}. Thus, there are $f^0_* \in C^0(\overline{D}_0)$ and a subsequence $(f_{n(k)})_{k \in \mathbb{N}}$ satisfying $f_{n(k)} \to f^0_*$ as $k\to \infty$ and $f^0_*|_{D^0} \in C^1(D^0)$. Similarly, define $D_1:=(T_1/2,3T_1/2) \times (0,T_2)$. The same Lemma applied to $\mathcal{F}_1:=\{f_{n(k)}|_{\overline{D}_1}\}_{k \in \mathbb{N}}$ ensures that there are $f^1_* \in C^0(\overline{D}_1)$ and a subsequence $(f_{n(k(j))})_{j \in \mathbb{N}}$ such that $f_{n(k(j))} \to f^1_*$ as $j \to \infty$ and $f_*^1|_{D^1}  \in C^1(D_1)$. Similarly, defining $D_2:=(0,T_1) \times (T_2/2,3T_2/2)$ and applying the Lemma to $\mathcal{F}_2:=\{f_{n(k(j))}|_{\overline{D}_2}\}_{j \in \mathbb{N}}$  guarantees the existence of $f^2_* \in C^0(\overline{D}_2)$ and a further subsequence $(f_{n_k(j(i))})_{i \in \mathbb{N}}$ such that $f_{n(k(j(i)))} \to f^2_*$ as $i \to \infty$ and $f^2_*|_{D^2}  \in C^1(D_2)$.
	
	Observe that $f^1_*(x,y)=f_*^0(x,y)$ for all $(x,y) \in \overline{D}_0 \cap \overline{D}_1$, because
	\begin{align*}
		f_*^1(x,y) = \lim_{j\to \infty} f_{n(k(j))}(x,y) = \lim_{k \to \infty} f_{n(k)}(x,y) = f^0_*(x,y).
	\end{align*}
	Similarly, $f^2_*(x,y)=f_*^0(x,y)$ for all $(x,y) \in \overline{D}_0 \cap \overline{D}_2$. Moreover, $f^1_*(x+T_1,y) = f_*^0(x,y)$ for all $(x,y) \in \overline{D}_0 \setminus \overline{D}_1$, because
	\begin{align*}
		f_*^1(x+T_1,y) = \lim_{j\to \infty} f_{n(k(j))}(x+T_1,y) =\lim_{j\to \infty} f_{n(k(j))}(x,y) = \lim_{k \to \infty} f_{n(k)}(x,y) = f^0_*(x,y).
	\end{align*}
	Similarly, $f^2_*(x,y+T_2)=f_*^0(x,y)$ for all $(x,y) \in \overline{D}_0 \setminus \overline{D}_2$.
	
	Define $g^0:\mathbb{R}^2 \to \mathbb{R}$, $g^1:\mathbb{R}^2 \to \mathbb{R}$, and $g^2:\mathbb{R}^2 \to \mathbb{R}$ by
	\begin{align*}
		&g^0(x,y) = f_*^0(x \mod T_1, y \mod T_2), \\
		&g^1(x,y) = f_*^1\left(\frac{T_1}{2} + \left(x+\frac{T_1}{2} \mod T_1 \right), y \mod T_2 \right), \\
		&g^2(x,y) = f_*^2\left(x \mod T_1, \frac{T_2}{2} + \left(y+\frac{T_2}{2} \mod T_2 \right) \right).
	\end{align*}
	It is clear that $g^0$, $g^1$, and $g^2$ are $T_1$-periodic in the first variable and $T_2$-periodic in the second variable. Moreover, if $S_0:=\{(x,y) \in \mathbb{R}^2 : \exists k \in \mathbb{Z} \; \text{such that} \; x=kT_1 \; \text{or} \; y=kT_2\}$ and $Q^0 := \mathbb{R}^2 \setminus S_0$, then $g^0|_{Q_0} \in C^1(Q_0)$. For the same reason, if
	\begin{align*}
		&S_1:=\left\{(x,y) \in \mathbb{R}^2 : \exists k \in \mathbb{Z} \; \text{such that} \; x=\frac{T_1}{2}+ kT_1 \; \text{or} \; y=kT_2\right\}, \\
		&S_2:=\left\{(x,y) \in \mathbb{R}^2 : \exists k \in \mathbb{Z} \; \text{such that} \; x= kT_1 \; \text{or} \; y= \frac{T_2}{2}+kT_2\right\},
	\end{align*}
	and $Q_i:=\mathbb{R}^2\setminus S_i$ for $i \in \{1,2\}$, then $g^1|_{Q_1} \in C^1(Q_1)$ and $g^2|_{Q_2} \in C^1(Q_2)$.
	
	Since $f_*^0|_{\overline{D}_0 \cap \overline{D}_1} = f_*^1|_{\overline{D}_0 \cap \overline{D}_1}$, it follows that:
	\begin{align*}
		g^0(x,y) = f_0^*(x,y) = f^1_*(x,y) = f_*^1\left(\frac{T_1}{2} + \left(x+\frac{T_1}{2} \mod T_1 \right), y \mod T_2 \right) = g^1(x,y),
	\end{align*}
	for all $(x,y) \in [T_1/2,T_1) \times [0,T_2)$. Furthermore, since $f_*^0(x,y) = f_*^1(x+T_1,y)$ for all $(x,y) \in \overline{D}_0 \setminus \overline{D}_1$, it follows that:
	\begin{align*}
		g^0(x,y) = f_0^*(x,y) = f^1_*(x+T_1,y) &= f_*^1\left(\frac{T_1}{2} + \left(x+\frac{3T_1}{2} \mod T_1 \right), y \mod T_2 \right)\\
		&= f_*^1\left(\frac{T_1}{2} + \left(x+\frac{T_1}{2} \mod T_1 \right), y \mod T_2 \right) \\
		&= g^1(x,y),
	\end{align*}
	for all $(x,y) \in \overline{D}_0 \setminus \overline{D}_1$. Thus, it follows that $g^0|_{\overline{D}_0} = g^1|_{\overline{D}_0}$. By periodicity, this is equivalent to $g^0=g^1$.
	
	A similar argument as the one in the paragraph above ensures also that $g^0=g^2$. Thus, by defining $f_*:=g^0=g^1=g^2$, it follows that $f_*|_{D_0} \in C^1(D_0)$, $f_*|_{D_1} \in C^1(D_1)$, and $f_*|_{D_2} \in C^1(D_2)$, that is, $f_*|_{D_0 \cup D_1 \cup D_2} \in C^1(D_0 \cup D_1 \cup D_2)$. By periodicity, we conclude finally that $f|_* \in C^1(\mathbb{R}^2)$.
\end{proof}

\section*{Acknowledgements}

MRC is supported by S\~{a}o Paulo Research Foundation (FAPESP) grants 2018/07344-0 and 2019/05657-4. DDN is supported by S\~{a}o Paulo Research Foundation (FAPESP) grants 2018/13481-0 and 2019/10269-3, and by Conselho Nacional de Desenvolvimento Cient\'{i}fico e Tecnol\'{o}gico (CNPq) grants 438975/2018-9 and 309110/2021-1. PCCRP is supported by S\~{a}o Paulo Research Foundation (FAPESP) grant 2020/14232-4.

\bibliographystyle{abbrv}
\bibliography{references}

\end{document}